\documentclass[a4paper,12pt,leqno]{amsart}
\usepackage[colorlinks=true,linkcolor=darkblue,citecolor=darkblue]{hyperref}

\usepackage{graphicx}
\usepackage{pinlabel} 
\usepackage{amsmath}
\usepackage{amsfonts}
\usepackage{amssymb}
\usepackage{mathtools}
\usepackage[all,cmtip]{xy}
\usepackage{amsthm}

\usepackage{tikz-cd}

\usepackage{comment}
\usepackage{enumerate}
\usepackage{url}
\usepackage{epsfig}
\usepackage{hyperref}
\usepackage[utf8]{inputenc}
\usepackage[capitalise]{cleveref}
\usepackage{geometry}
\makeatletter
\providecommand\@dotsep{5}
\def\listtodoname{List of Todos}
\def\listoftodos{\@starttoc{tdo}\listtodoname}
\makeatother

\newtheorem{theorem}{Theorem}[section]
\newtheorem{proposition}[theorem]{Proposition}
\newtheorem{corollary}[theorem]{Corollary}
\newtheorem{lemma}[theorem]{Lemma}

\newcommand{\mycomment}[1]{}

  \theoremstyle{definition}
\newtheorem{definition}[theorem]{Definition}

\newtheorem{remark}{Remark}

\newcommand{\Z}{\mathbb{Z}}

\newcommand{\calF}{{\mathcal F}}

\newcommand{\calG}{{\mathcal G}}

\newcommand{\nbeq}{\begin{equation}}
\newcommand{\neeq}{\end{equation}}
\newcommand{\beq}{\begin{equation*}}
\newcommand{\eeq}{\end{equation*}}

\DeclareMathOperator{\cd}{cd}

\DeclareMathOperator{\vcd}{vcd}

\newcommand{\ame}{Ame}

\DeclareMathOperator{\gd}{gd}
\DeclareMathOperator{\gdfin}{gd_{\calF_0}}
\DeclareMathOperator{\gdvc}{\underline{\underline{gd}}}

\DeclareMathOperator{\mcg}{Mod}
\DeclareMathOperator{\pmcg}{PMod}

\DeclareMathOperator{\pb}{PB}

\DeclareMathOperator{\rank}{rank}

\definecolor{darkblue}{rgb}{0.0, 0.0, 0.55}
\usepackage{verbatim}

\begin{document}

\title[]{Classifying spaces for families of virtually abelian subgroups of surface braid groups}



\author[Ramón Flores]{Ramón Flores}
\address{Dpto. Geometría y Topología, Facultad de Matemáticas, Instituto de Matemáticas (IMUS), Universidad de Sevilla, Av. Reina Mercedes s/n, 41012 Sevilla, Spain}
\email{ramonjflores@us.es}

\author[Juan González-Meneses]{Juan González-Meneses}
\address{Dpto. Álgebra, Facultad de Matemáticas, Instituto de Matemáticas (IMUS), Universidad de Sevilla, Av. Reina Mercedes s/n, 41012 Sevilla, Spain}
\email{meneses@us.es}

\author[Porfirio L. León Álvarez]{Porfirio L. León Álvarez}
\address{Instituto de Matemáticas, Universidad Nacional Autónoma de México, Oaxaca de Juárez, Oaxaca, México 68000}
\email{porfirio.leon@im.unam.mx}

\subjclass[2020]{ 55R35, 20F36, 57M07}

\date{}


\keywords{Surface braid groups, classifying spaces for families, virtually cyclic dimension}

\begin{abstract}

Given a group $G$ and an integer $n \geq 0$, let $\mathcal{F}_n$ denote the family of all virtually abelian subgroups of $G$ of rank at most $n$.
In this article, we show that for each $n \geq 1$, the minimal dimension of a model for the classifying space $E_{\mathcal{F}_n}G$ for the pure braid group of a surface of non-negative Euler characteristic with at least one boundary component or one puncture is equal to the virtual cohomological dimension of $G$ plus $n$. We prove analogous results for the full braid group of the sphere and the pure braid group of the projective plane.
As an application, we compute the minimal dimension of a model for the classifying space associated to the family of amenable subgroups of pure surface braid groups.
\end{abstract}
\maketitle


\section{Introduction}
\noindent Let \( G \) be a group. A collection \( \calF \) of subgroups of \( G \) is called a \emph{family} if it is non-empty and closed under conjugation and taking subgroups. For a given family $\calF$ of subgroups of $G$, a \( G \)-CW-complex \( X \) is  a \emph{model for the classifying space} \( E_{\calF}G \) if all its isotropy groups belong to \( \calF \), and for any \( G \)-CW-complex \( Y \) whose isotropy groups are also in \( \calF \), there exists a unique \( G \)-map \( Y \to X \), up to \( G \)-homotopy. It can be shown that a model for the classifying space \( E_{\calF}G \) always exists and is unique up to \( G \)-homotopy equivalence. The \emph{\( \calF \)-geometric dimension} of \( G \) is defined as
\[
\gd_{\calF}(G) = \min\{n \in \mathbb{N} \mid \text{there exists a model for } E_{\calF}G \text{ of dimension } n\}.
\]

The \( \calF \)-geometric dimension has an algebraic counterpart, the \emph{\( \calF \)-cohomological dimension} \( \cd_{\calF}(G) \), which can be defined using Bredon cohomology; see for example \cite{MV03}. The $\calF$-geometric and $\calF$-cohomological dimensions are related by the following inequality (see \cite[Theorem 0.1]{LM00}):
\[
\cd_{\calF}(G) \leq \gd_{\calF}(G) \leq \max\{\cd_{\calF}(G), 3\}.
\]
In particular, if \( \cd_{\calF}(G) \geq 3 \), then \( \cd_{\calF}(G) = \gd_{\calF}(G) \). However, this equality does not always hold. For instance, in the case of the family \( \calF_0 \) of finite subgroups, it was shown in \cite{NB} that there exists a right-angled Coxeter group \( W \) such that \( \cd_{\calF_0}(W) = 2 \) and \( \gd_{\calF_0}(W) = 3 \). For other examples see \cite{SS19}.

Let \( n \geq 0 \) be an integer. A group is said to be \emph{virtually \( \mathbb{Z}^n \)} if it contains a finite-index subgroup isomorphic to \( \mathbb{Z}^n \). We define the family
\[
\calF_n = \{H \leq G \mid H \text{ is virtually } \mathbb{Z}^r \text{ for some } 0 \leq r \leq n\}.
\]

The classifying spaces \( E_{\calF_0}G \) and \( E_{\calF_1}G \) are of particular interest because they appear explicitly in the formulation of the Baum-Connes and Farrell-Jones isomorphism conjectures, respectively. For more details, see \cite{LR05}.

The classifying spaces \( E_{\calF_n}G \) have been extensively studied in recent years. In 2018, Nucinkis and collaborators in \cite{victor:nucinkis:corob} proved that for a free abelian group \( G \) of rank \( n \), the inequality \( \gd_{\calF_k}(G) \leq n + k \) holds; later, in \cite{PL}, was proven that this upper bound is sharp. 
In 2021, Tomasz in \cite{tomasz} studied  CAT(0)-groups. He shows that for a group \( G \) acting on a CAT(0) space of topological dimension \( n \), and under the assumption that \( G \) satisfies condition C, the inequality
\[
\cd_{\calF_r}(G) \leq n + r + 1
\]
holds for all \( 0 \leq r \leq n \).

In 2023, the following was proven in \cite{Rita:Porfirio:Luis}. Denote the mapping class group by \( \mcg(S) \) of a connected, orientable, compact surface \( S \), possibly with a finite number of punctures. If \( S \) has negative Euler characteristic, then for all \( n \in \mathbb{N} \), the geometric dimension satisfies
\[
\gd_{\calF_n}(\mcg(S)) \leq \cd(\mcg(S)) + n.
\]
In this article, we show that for the pure braid group $G$ of a surface, the minimal dimension of a model for \(E_{\mathcal{F}_n}G\) is equal to \(\mathrm{cd}(G) + n\).  In what follows, we state this result precisely. 

\subsection*{Virtually abelian dimension of finite-type surface braid groups}

Let \( \Sigma \) be a connected surface, possibly of infinite type (i.e., with non-finitely generated fundamental group), possibly non-orientable, and possibly with punctures or boundary. 
Let \( n \in \mathbb{N} \). 
The \emph{configuration space of \( n \) points on \( \Sigma \)}, denoted \( F_n(\Sigma) \), is defined as
\[
F_n(\Sigma) = \{ (x_1, x_2, \ldots, x_n) \in \Sigma^n \mid x_i \neq x_j \text{ for } i \neq j \}.
\]
Let \( \mathfrak{S}_n \) denote the symmetric group on \( n \) letters. There is a natural free action of \( \mathfrak{S}_n \) on \( F_n(\Sigma) \) by permuting the coordinates. 

The \emph{pure braid group} on \( \Sigma \) with \( n \) strands, denoted \( \pb_n(\Sigma) \), is defined as the fundamental group of the configuration space \( F_n(\Sigma) \). The \emph{braid group} on \( \Sigma \) with \( n \) strands, denoted \( B_n(\Sigma) \), is the fundamental group of the quotient space \( F_n(\Sigma) / \mathfrak{S}_n \). The following short exact sequence relates these two groups:
\[
1 \to \pb_n(\Sigma) \to B_n(\Sigma) \to \mathfrak{S}_n \to 1.
\]
Let $\Sigma$ be a surface with boundary $\partial\Sigma$, then $B_n(\Sigma)$ is isomorphic to $B_n(\Sigma-\partial\Sigma)$, see for example \cite[Remarks 8]{MR3382024}.
It has been established that normally poly-free groups satisfy the Farrell-Jones conjecture \cite[Theorem A]{MR4246787}, see also \cite{MR1797585}, \cite[Theorem 1.1]{MR4565703}, and \cite[Theorem 2.3.7]{JPSS}. In particular, the pure braid group of a surface with non-empty boundary is known to be normally poly-free \cite{MR1797585}. Moreover, in the case of surfaces of infinite type, the pure braid group is also normally poly-free \cite[Theorem~1.5(b)]{2025arXiv250610706L}, and then these groups also satisfy the Farrell-Jones conjecture.

One of the main results of this paper is the following.


\begin{theorem}\label{gd:f1:surface:braid}
Let \( \Sigma \) be a connected surface of finite type, possibly non-orientable and possibly with punctures or boundary components, with non-positive Euler characteristic and at least one puncture or one boundary component. Then for all integers $r\ge0$ 

\[
\gd_{\calF_r}(\pb_n(\Sigma)) =\cd(\pb_n(\Sigma)) + r.
\]

If, on the other hand, \( \Sigma \) is closed and has non-positive Euler characteristic, then the following inequalities hold:

\[
\cd(\pb_n(\Sigma)) + r - 1 \leq \gd_{\calF_r}(\pb_n(\Sigma)) \leq\cd(\pb_n(\Sigma)) + r
\]
for all integers $r\ge0$.
\end{theorem}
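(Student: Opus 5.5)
We prove an upper bound $\gd_{\calF_r}(\pb_n(\Sigma))\le \cd(\pb_n(\Sigma))+r$ for all surfaces in the statement, and a matching lower bound using a free abelian subgroup of large rank, whose size depends on whether $\Sigma$ has boundary or punctures.

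\textbf{Upper bound.} We argue by induction on $n$ via the Fadell--Neuwirth fibration $F_n(\Sigma)\to F_{n-1}(\Sigma)$. Its fibre is $\Sigma$ minus $n-1$ points. When $\chi(\Sigma)\le 0$ this gives a short exact sequence
\[
1\to \pi_1(\Sigma\setminus\{n-1\text{ pts}\})\to \pb_n(\Sigma)\to \pb_{n-1}(\Sigma)\to 1 .
\]
The kernel is a surface group, either free or closed. Iterating, $\pb_n(\Sigma)$ is poly-(free or surface) and torsion free, and $\cd(\pb_n(\Sigma))$ is the sum of the cohomological dimensions of the pieces. This is $n$ if $\Sigma$ has punctures or boundary, and $n+1$ if $\Sigma$ is closed.

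To pass from $\calF_0$ to $\calF_r$ we use the standard Lück--Weiermann pushout. One step raises the dimension by at most one, provided for every maximal $\Z^r$ subgroup $H$ one controls the commensurator $N[H]$. What is needed is $\gd_{\calF_{r-1}\cap N[H]}(N[H])\le \cd+r-1$ together with $\gd_{\calF_r}(N[H]) \le \cd + r$.

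An alternative, which I would try first, is the extension result of Lück--Weiermann / Martínez-Pérez for families along the fibration. In the free-fibre case (punctured $\Sigma$), $\pb_n(\Sigma)$ is poly-free and normally poly-free. One then inducts with the inequality
\[
\gd_{\calF_r}(G)\le \max_{K}\bigl(\gd_{\calF_r\cap\pi^{-1}K}(\pi^{-1}K)\bigr)+\gd_{\calF_r}(Q).
\]
The preimages $\pi^{-1}(K)$ of virtually abelian $K\le Q$ are free-by-$\Z^s$ groups. For these we need the bound $\gd_{\calF_r}\le 1+s+r-s$, obtained for example by analysing centralisers of $\Z^k$ inside them.

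The main obstacle is this step: controlling commensurators and centralisers of free abelian subgroups in free-by-abelian groups. The aim is to show that each contributes exactly one extra dimension per rank, as in the $\Z^n$ computation of \cite{victor:nucinkis:corob}. For the closed case the fibre is a closed surface group, which is hyperbolic. There one uses Juan-Pineda--Leary type models with $\gd_{\calF_1}=2$ for the fibre and the same extension argument.

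\textbf{Lower bound.} We exhibit a subgroup $\Z^{m}\le \pb_n(\Sigma)$ and use $\gd_{\calF_r}(\Z^m)=m+r$ for $r\le m$ \cite{PL}, together with monotonicity of $\gd_{\calF_r}$ under passing to subgroups.

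If $\Sigma$ has a puncture or boundary, choose $n$ nested disjoint curves around the puncture, adding the points one at a time. This works because $\chi\le 0$ leaves room for a nonperipheral essential curve in the first level. The Dehn twists (or full twists) about these curves, together with the point pushes, generate a $\Z^n=\Z^{\cd}$. This gives $\gd_{\calF_r}\ge n+r$ for $r\le n$.

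For $r>n$, the family $\calF_r$ agrees with $\calF_{n'}$, where $n'$ is the maximal rank of an abelian subgroup. The statement as written then needs a separate argument, perhaps by taking the maximal rank to be $\cd$ and invoking a stabilisation result stated earlier. I would check whether the paper intends $r\le\cd$.

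In the closed case the same construction yields only $\Z^{n}=\Z^{\cd-1}$, since closed surface groups contain no $\Z^2$. This gives the lower bound $\cd+r-1$, as stated.
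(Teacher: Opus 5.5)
\textbf{The gap is the upper bound.} Your lower bound is essentially the paper's: an annulus $\Sigma_0^2\subset\Sigma$, nested full twists giving $\Z^n\le\pb_n(\Sigma_0^2)\hookrightarrow\pb_n(\Sigma)$, then \cref{Virtually:dimension:Z} and monotonicity. For $\gd_{\calF_r}(G)\le\cd(G)+r$, however, you name the Lück--Weiermann pushout and leave its only nontrivial input as ``the main obstacle''. That input is a bound $\gd_{\calF_r[H]}(N_G[H])\le\cd(G)+r$ for the family $\calF_r[H]$, not $\calF_r$. The $\calF_{r-1}$ term is automatic, by restricting $E_{\calF_{r-1}}G$.

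The route you would try first cannot supply this bound. The inequality $\gd_{\calF_r}(G)\le\gd_{\calF_r}(Q)+\max_K\gd_{\calF_r\cap\pi^{-1}K}(\pi^{-1}K)$ is valid but too lossy. Even granting $\gd_{\calF_r}(Q)\le\cd(Q)+r$, already $K=1$ contributes the nonabelian free fibre, which has $\gd_{\calF_1}=2$. So you get at best $\cd(Q)+r+2=\cd(G)+r+1$. Your intermediate target $1+s+r-s=1+r$ does not add up either, since $(1+r)+(\cd(Q)+r)=\cd(G)+2r$. It is also false: $\pi^{-1}(K)$ can contain $\Z^{s+1}$ (lift nested twists supported in a disc and push the new point along a loop disjoint from it). For $r=s\ge1$ this forces $\gd_{\calF_s}(\pi^{-1}K)\ge 2s+1$.

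\textbf{What is missing} are the three ingredients of \cref{upper:bound:gd:fn}.
First, \cref{commen:equal:norma}: each commensurator $N_G[L]$ equals a normalizer $N_G(H)$ with $H$ commensurable to $L$. This comes from embeddings into mapping class groups together with \cref{property:pass:subgroup}. Then, writing $\calF_k[H]=\calG\cup(\calF_{k-1}\cap N_G(H))$, a model for $E_{\calF_0}W_G(H)$ is a model for $E_{\calG}N_G(H)$.
Second, \cref{upper:bound:weil}: $\gdfin(W_G(H))\le\cd(G)-\rank H$, proved by induction on strands via Fadell--Neuwirth and \cref{exten:gd:inequality}. So the fibration is used only for proper actions of Weyl groups, where the additive extension bound has no loss.
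Third, \cref{upper:bound:gd:two:families} and \cref{nested:families:lw}, with $\gd_{\calF_{k-1}}\le 2k-1$ on virtually-$\Z^k$ groups, give $\gd_{\calF_k[H]}(N_G(H))\le\max\{\cd(G)+k-1,\,(\cd(G)-k)+(2k-1)+1,\,\cd(G)-k\}=\cd(G)+k$.

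On the lower bound, \cref{Virtually:dimension:Z} gives $n+r$ only for $r<n$, since $\gd_{\calF_n}(\Z^n)=0$. Also, torus and Klein bottle groups do contain $\Z^2$; this is harmless for the weaker closed-case bound.

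Your doubt about large $r$ is justified. $G$ is torsion-free, so $\calF_r=\calF_{\cd(G)}$ for all $r\ge\cd(G)$. Hence $\gd_{\calF_r}(G)$ is constant there, and the stated equality holds for at most one such $r$. The paper's own lower-bound argument likewise only reaches $r<n$, and no stabilisation result can repair this.
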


For the full braid group of the sphere, we obtain an analogous result.

\begin{theorem}\label{abelian:dim:sphere:0}
Let $\Sigma$ be the sphere. For all integers $r\geq 0$
\[
  \gd_{\calF_r}(B_n(\Sigma)) = \vcd(B_n(\Sigma)) + r.
\]
\end{theorem}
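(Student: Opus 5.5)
Recall that for $n\ge 3$ the braid group $B_n(S^2)$ has centre $Z\cong\Z/2$ generated by the full twist, and that $B_n(S^2)/Z$ is isomorphic to a finite-index subgroup of $\mcg(S_{0,n})$, the mapping class group of the sphere with $n$ punctures. In particular $\vcd(B_n(S^2))=\vcd(\mcg(S_{0,n}))=n-3$. For $n\le 2$ the group is finite, so every $E_{\calF_r}$ has a point as a model and $\vcd=0$; this gives the degenerate cases. For $n=3$ the group is virtually free with $\vcd=0$, so we need $\gd_{\calF_r}=r$. For $\mcg(S_{0,3})$, which is finite, one instead checks directly that $B_3(S^2)$ is finite (of order $12$), so we get $0=0+r$ only when $r=0$. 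We will have to handle $n=3$ separately, or restrict to $n\ge 4$ as the paper presumably does. We therefore take $n\ge 4$, where $\vcd(B_n(S^2))=n-3\ge 1$.

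\emph{Upper bound.} Since $Z$ is finite, the projection $B_n(S^2)\to Q:=B_n(S^2)/Z$ carries virtually $\Z^k$ subgroups to virtually $\Z^k$ subgroups, and preimages of such subgroups are again virtually $\Z^k$. Hence $\calF_r(B_n(S^2))$ is exactly the pullback of $\calF_r(Q)$, and any model for $E_{\calF_r}Q$, with $B_n(S^2)$ acting through the quotient, is a model for $E_{\calF_r}B_n(S^2)$. Thus $\gd_{\calF_r}(B_n(S^2))\le \gd_{\calF_r}(Q)$. The group $Q$ has finite index in $\mcg(S_{0,n})$, which has negative Euler characteristic. Restricting a model for $E_{\calF_r}\mcg(S_{0,n})$ gives a model for $Q$, so the bound $\gd_{\calF_r}(\mcg(S))\le\cd+r$ of \cite{Rita:Porfirio:Luis} applies. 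This needs care: for a group with torsion, ``$\cd$'' there should mean $\vcd$, and I would use the version stated for $\mcg$ with $\vcd$. The result is $\gd_{\calF_r}(B_n(S^2))\le n-3+r$. An alternative is to pass to the pure braid group: $\pb_n(S^2)\cong \Z/2\times \pb_{n-3}(S_{0,3})$, and \cref{gd:f1:surface:braid} applies to the punctured sphere factor. But the upper bound for the full group needs an induction from finite index, and that is not automatic for $\calF_r$. So I prefer the mapping class group route.

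\emph{Lower bound.} Again $\gd_{\calF_r}$ does not increase when passing to subgroups, since restricting a model for the group gives a model for the subgroup with the induced family. So $\gd_{\calF_r}(B_n(S^2))\ge \gd_{\calF_r}(\pb_n(S^2))$. Using $\pb_n(S^2)\cong \Z/2\times \pb_{n-3}(\Sigma_{0,3})$, where $\Sigma_{0,3}$ is the sphere with three punctures, the same pullback argument through the finite factor gives $\gd_{\calF_r}(\pb_n(S^2))=\gd_{\calF_r}(\pb_{n-3}(\Sigma_{0,3}))$. Now $\Sigma_{0,3}$ has negative Euler characteristic and has punctures, so \cref{gd:f1:surface:braid} gives $\gd_{\calF_r}(\pb_{n-3}(\Sigma_{0,3}))=\cd(\pb_{n-3}(\Sigma_{0,3}))+r$. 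The cohomological dimension of $\pb_{n-3}(\Sigma_{0,3})$ equals $n-3$: the Fadell--Neuwirth fibrations express it as an iterated extension of $n-3$ free groups, and it contains $\Z^{n-3}$. Combining the two bounds gives equality.

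\emph{Main obstacle.} The key step is justifying the splitting $\pb_n(S^2)\cong\Z/2\times\pb_{n-3}(\Sigma_{0,3})$, which is a classical result of Gon\c{c}alves--Guaschi, together with the correct identification of $\vcd$ and the applicability of the mapping class group bound with torsion. If the cited upper bound is stated only for torsion-free groups, I would instead build the model by hand. The plan is to combine a model for $\underline{E}\mcg(S_{0,n})$ of dimension $\vcd$, given by the spine of Teichmüller space, with the L\"uck--Weiermann pushout construction iterated over commensurator classes of virtually $\Z^k$ subgroups. This adds one dimension per rank.
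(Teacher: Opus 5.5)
Your route is essentially the paper's. For the upper bound, the paper also quotients by the central $\Z/2$ and cites \cite{Rita:Porfirio:Luis}, in its $\vcd$ form, for $\mcg(S^2;n)$. Your observation that $\calF_r(B_n(S^2))$ is exactly the pull-back of $\calF_r$ of the quotient is a cleaner substitute for \cref{upper:bound:gdf_n:ses}. For the lower bound, the paper embeds $\pb_{n-3}(\Sigma_{0,3})$ in $\pb_n(S^2)$, finds $\Z^{n-3}$ inside it via the annulus (\cref{free:abelian:subgroup:sphere}), and applies \cref{Virtually:dimension:Z}. Your detour through the Gon\c{c}alves--Guaschi splitting and \cref{gd:f1:surface:braid} is the same argument, because the lower half of \cref{gd:f1:surface:braid} is proved with exactly that free abelian subgroup.

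There is a genuine gap, and the paper's proof shares it: the range of $r$. \cref{Virtually:dimension:Z} gives $\gd_{\calF_r}(\Z^{m})=m+r$ only for $r<m$. For $r\ge m$ we have $\Z^m\in\calF_r$, so the subgroup yields nothing. Citing \cref{gd:f1:surface:braid} ``for all $r$'' for $\pb_{n-3}(\Sigma_{0,3})$ therefore imports an overstatement, and your lower bound is justified only for $0\le r<n-3$. In fact the claimed equality is false for $r>n-3$. Every virtually $\Z^k$ subgroup of $B_n(S^2)$ has $k\le\vcd(B_n(S^2))=n-3$, so $\calF_r=\calF_{n-3}$ for all $r\ge n-3$. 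Your own upper bound then gives $\gd_{\calF_r}(B_n(S^2))=\gd_{\calF_{n-3}}(B_n(S^2))\le 2(n-3)<(n-3)+r$.

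The same phenomenon undermines your sentence on $n\le 2$: a point model gives $\gd_{\calF_r}=0\ne 0+r$ for $r\ge 1$. This is precisely the failure you correctly spotted for $n=3$. The paper asserts the case $n\le 2$ is ``clear'' and does not separate out $n=3$, so it has the same defect.

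What your argument actually establishes is the following. For $n\ge 4$, $\gd_{\calF_r}(B_n(S^2))\le n-3+r$ for every $r$, with equality for $0\le r<n-3$. The borderline case $r=n-3$ is not decided by this lower-bound argument.
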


\begin{remark}
When \(\chi(\Sigma)>0\), the \(\mathcal{F}_n\)-dimension is known only in certain cases. In particular, for the braid group on the disk it was computed in \cite{Ramon:Juan} and \cite[Theorem~1.3]{PL}. The computation of the \(\mathcal{F}_n\)-dimension in \cref{gd:f1:surface:braid} is explicit since the cohomological dimension of surface braid groups has already been established in the literature; see, for example, \cite[Theorem~1.1]{2025arXiv250610706L} and the references therein. 
\end{remark}

To prove \cref{gd:f1:surface:braid}, we obtain the upper bound by applying the Lück--Weiermann construction and the  Fadell--Neuwirth fibration to analyze the geometric dimension for proper actions of the Weyl groups associated to abelian subgroups.

For the lower bound, we show that the surface braid group admits an abelian subgroup of rank equal to its cohomological dimension. This fact may be of independent interest.

\begin{theorem}\label{free:abelian:subgroup:cd:0}
Let \( \Sigma \) be a connected surface of finite type, possibly non-orientable, with a non-positive Euler characteristic. Then the pure braid group $\pb_n(\Sigma)$ contains a free abelian subgroup of rank $n$.
\end{theorem}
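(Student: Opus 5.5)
The idea is to realize $\mathbb{Z}^n$ as $n$ commuting braids, each moving one strand around a fixed essential loop, and to detect that they generate a free abelian group of rank $n$ by mapping to $\pi_1(\Sigma)^n$. The key observation is that the inclusion $F_n(\Sigma)\hookrightarrow \Sigma^n$ induces a homomorphism
\[
\Phi\colon \pb_n(\Sigma)\to \pi_1(\Sigma)^n ,
\]
whose $i$-th coordinate records the loop traced by the $i$-th strand. So it suffices to exhibit $n$ pairwise commuting pure braids $\beta_1,\dots,\beta_n$ whose images under $\Phi$ generate a copy of $\mathbb{Z}^n$.

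First, find an embedded annulus $A\subset\Sigma$ whose core curve $c$ has infinite order in $\pi_1(\Sigma)$. Since $\chi(\Sigma)\le 0$, $\Sigma$ is none of the sphere, the disk or the projective plane. Hence $\pi_1(\Sigma)$ is nontrivial and torsion-free, so any homotopically nontrivial two-sided simple closed curve has infinite order. I would choose $c$ case by case:
\begin{itemize}
\item if $\Sigma$ has a puncture or boundary component, take a curve parallel to it; this is nontrivial because $\Sigma$ is not a disk, and it is two-sided because it bounds a collar;
\item if $\Sigma$ is closed and orientable (genus $\ge1$), take a nonseparating simple closed curve;
\item if $\Sigma$ is closed and non-orientable (so it is a connected sum of at least two projective planes), take a two-sided nonseparating curve, for instance the curve in the Klein-bottle summand that is not a one-sided curve.
\end{itemize}
In every case a regular neighbourhood of $c$ is an annulus $A$. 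This case check is the only step needing care, and it is where the hypothesis $\chi(\Sigma)\le0$ enters, so I expect it to be the main obstacle, although it is mild.

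Next, subdivide $A$ into $n$ disjoint concentric sub-annuli $A_1,\dots,A_n$. Put the base configuration with the $i$-th point $x_i$ on the core of $A_i$. Let $\beta_i\in\pb_n(\Sigma)$ be the braid in which the $i$-th point travels once around the core of $A_i$ while all other points stay fixed.

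These braids commute pairwise. Indeed, $\beta_i$ is the image of a loop supported in $A_i$ moving only the $i$-th point, and loops with disjoint supports moving different points commute; explicitly, the map $S^1\times S^1\to F_n(\Sigma)$, $(s,t)\mapsto$ ($i$-th point at angle $s$ in $A_i$, $j$-th point at angle $t$ in $A_j$) is well defined and gives a torus realizing the commutation. So the subgroup $H=\langle\beta_1,\dots,\beta_n\rangle$ is abelian and is a quotient of $\mathbb{Z}^n$.

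Finally, compute $\Phi(\beta_i)=(1,\dots,1,c,1,\dots,1)$, with $c$ in position $i$ (up to the change of basepoint within $A$). Since $c$ has infinite order, $\Phi(H)=\langle c\rangle^n\cong\mathbb{Z}^n$. Thus $H$ is an abelian group generated by $n$ elements that surjects onto $\mathbb{Z}^n$. Since $\mathbb{Z}^n$ is Hopfian, the induced surjection $\mathbb{Z}^n\to H\to\mathbb{Z}^n$ is an isomorphism, and therefore $H\cong\mathbb{Z}^n$. This proves \cref{free:abelian:subgroup:cd:0}.
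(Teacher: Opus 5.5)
Your proof is correct, and it takes a genuinely different route from the paper's.

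The paper works first in the annulus. It collapses one boundary component of $\Sigma_0^2$ to an extra marked point and views $B_n(\Sigma_0^2)$ inside the disc braid group $B_{n+1}$. It then takes the nested full twists $t_i=(\sigma_1\cdots\sigma_{i-1})^i$, $2\le i\le n+1$, which are Dehn twists about nested curves $a_i$. Commutativity comes from $t_j$ being central in $\pb_j\supset\pb_i$. Independence is proved by reading off the algebraic crossing numbers $c_{i-1,i}$ one index at a time. The resulting $\mathbb{Z}^n\le\pb_n(\Sigma_0^2)$ is then transported into $\pb_n(\Sigma)$ by the Paris--Rolfsen injectivity theorem, applied to an annulus embedded with no disc complementary component.

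The two subgroups really are different. For $i\le n$ the twist $t_i$ moves strands only inside the disc bounded by $a_i$, so it lies in the kernel of your map $\Phi\colon\pb_n(\Sigma)\to\pi_1(\Sigma)^n$. Only $t_{n+1}$, a rigid rotation of all points around the core, has nontrivial image. That is why the paper needs braid combinatorics, whereas your point-pushing generators are detected directly by $\Phi$.

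Your route buys self-containedness and uniformity. It needs no injectivity theorem for subsurface braid groups, whose hypotheses exclude $S^2$, $\mathbb{R}P^2$ and disc complements. It needs no crossing-number bookkeeping. The same argument also works verbatim for infinite-type surfaces, where the paper has to invoke a separate embedding result.

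The paper's route buys an embedding of the whole group $\pb_n(\Sigma_0^2)$, which it reuses as $\gd_{\calF_r}(\pb_n(\Sigma))\ge\gd_{\calF_r}(\pb_n(\Sigma_0^2))$ in its lower-bound theorems. It also gives a free abelian subgroup most of whose generators live in a disc and do not depend on $\pi_1(\Sigma)$. For the lower bounds themselves, however, only the existence of a $\mathbb{Z}^n$ in $\pb_n(\Sigma)$ is used, and your argument provides exactly that.
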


\subsection*{Amenable dimension of surface braid groups.} 

As an application of \cref{gd:f1:surface:braid} we obtain the following. 
It is well known that surface braid groups satisfy the strong Tits alternative, 
that is, every subgroup is either virtually abelian or contains a non-abelian free group of rank~2 
(see \cite{MR745513,MR800253,MR726319}). Then every amenable subgroup is virtually free abelian. Consequently, the family of amenable subgroups coincides with the family of virtually abelian subgroups, 
and we obtain the following corollary.

\begin{corollary}
    Let \( \Sigma \) be a connected surface of finite type with at least one boundary component and negative Euler characteristic. Then 
    \[
    \gd_{\ame}\!\big(\pb_n(\Sigma)\big) =\cd\!\big(\pb_n(\Sigma)\big) + n.
    \]
\end{corollary}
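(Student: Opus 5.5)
The statement to prove is the last one displayed, the Corollary. The plan is to deduce it directly from \cref{gd:f1:surface:braid}, in three steps.

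First, I identify the family $\ame$ of amenable subgroups of $G=\pb_n(\Sigma)$ with a family $\calF_r$. By the strong Tits alternative for surface braid groups (\cite{MR745513,MR800253,MR726319}), every subgroup of $G$ is either virtually abelian or contains a non-abelian free subgroup. Subgroups of amenable groups are amenable, and $F_2$ is not amenable, so every amenable subgroup is virtually abelian. Since $\Sigma$ has boundary, $G$ is torsion-free: it is the fundamental group of the aspherical configuration space $F_n(\Sigma)$, by the Fadell--Neuwirth fibrations. Hence a virtually abelian subgroup is a finitely generated virtually $\Z^r$ group. Finite generation holds because $G$ has finite cohomological dimension and is of type F, so its abelian subgroups are finitely generated. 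Conversely, virtually abelian groups are amenable. Therefore $\ame=\bigcup_r \calF_r$.

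Second, I bound the ranks that actually occur. If $\Z^r\le G$, then $r=\cd(\Z^r)\le \cd(G)$. So every amenable subgroup lies in $\calF_{N}$ with $N=\cd(G)$, and $\ame=\calF_N$.

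The main obstacle is reconciling this with the claimed value $\cd(G)+n$. This requires $N=n$, that is, $\cd(\pb_n(\Sigma))=n$ for a surface with boundary and $\chi(\Sigma)<0$. I will verify this from the Fadell--Neuwirth fibration $F_{n}(\Sigma)\to F_{n-1}(\Sigma)$, whose fibre is $\Sigma$ minus $n-1$ points. This fibre is homotopy equivalent to a graph, so $\pb_n(\Sigma)$ is an iterated extension of free groups, and $\cd\le n$ by subadditivity. For the reverse inequality, \cref{free:abelian:subgroup:cd:0} gives a subgroup $\Z^n\le\pb_n(\Sigma)$, so $\cd\ge n$. Hence $\cd=n$ and the maximal rank of a free abelian subgroup is exactly $n$, so $\ame=\calF_n$.

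Finally, \cref{gd:f1:surface:braid} applies, since $\Sigma$ has a boundary component and $\chi(\Sigma)\le 0$. With $r=n$ it gives
\[
\gd_{\ame}(\pb_n(\Sigma))=\gd_{\calF_n}(\pb_n(\Sigma))=\cd(\pb_n(\Sigma))+n,
\]
which is the claim.
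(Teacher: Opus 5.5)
Your argument is the paper's own: the strong Tits alternative identifies the amenable subgroups of $\pb_n(\Sigma)$ with the virtually abelian ones, and \cref{gd:f1:surface:braid} is then applied with $r=n$. You also make explicit a step the paper leaves implicit. Since the maximal rank of a free abelian subgroup is $\cd(\pb_n(\Sigma))=n$, this family is exactly $\calF_n$.

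One justification needs replacing. Finite cohomological dimension, or type F, does not force abelian subgroups to be finitely generated: the Baumslag--Solitar group $BS(1,2)$ is of type F and contains $\Z[1/2]$. The iterated extension of free groups that you use later does give finite generation. If $A\le\pb_n(\Sigma)$ is abelian, then $A\cap\pi_1(\Sigma-Q_{n-1})$ is cyclic, since abelian subgroups of free groups are cyclic. The image of $A$ in $\pb_{n-1}(\Sigma)$ is abelian, hence finitely generated by induction on $n$. So $A$ is finitely generated, and every amenable subgroup is virtually $\Z^r$ for a finite $r$.
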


The classifying space for the family of amenable subgroups has gained increasing attention in recent years due to its connection with the computation of the amenable category (see \cite{MR4474790}) and with the study of the amenable covering dimension (see \cite{MR4162286} \cite{2022arXiv220205606L}).

\subsection*{Virtually cyclic dimension of infinite-type surface braid groups}
When the surface is of infinite type, we can also compute the virtually cyclic geometric dimension.  

\begin{theorem}
Let \( \Sigma \) be a connected surface of infinite type. Then
\[
\gd_{\mathcal{F}_1}\bigl(\pb_n(\Sigma)\bigr) = n + 1.
\]
\end{theorem}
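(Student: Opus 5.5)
We need to prove $\gd_{\calF_1}(\pb_n(\Sigma)) = n+1$ for $\Sigma$ of infinite type. We prove the two inequalities separately.

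\textbf{Cohomological input.} For infinite type, $\Sigma$ is a non-compact surface, so every fibre $\Sigma - \{k \text{ points}\}$ of the Fadell--Neuwirth fibration is a non-compact surface and is homotopy equivalent to a graph. Hence $\pi_1$ of each fibre is a free group, possibly of infinite rank. Iterating the split fibrations $F_{k}(\Sigma)\to F_{k-1}(\Sigma)$ shows that $F_n(\Sigma)$ is aspherical. Moreover $\pb_n(\Sigma)$ is an iterated extension of $n$ free groups, so $\cd(\pb_n(\Sigma))\le n$. For equality, and for the lower bound below, we use \cref{free:abelian:subgroup:cd:0}, adapted to infinite type. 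Choose a compact subsurface $\Sigma_0\subset\Sigma$ of negative Euler characteristic with at least one boundary component, containing the base configuration. The inclusion induces a homomorphism $\pb_n(\Sigma_0)\to\pb_n(\Sigma)$. It is injective: the non-compact complement contains no discs or Möbius bands bounding boundary curves, so the Fadell--Neuwirth splittings are compatible and the fibre maps on free groups are injective. Thus $\pb_n(\Sigma)$ contains $\mathbb Z^n$, and $\cd=n$.

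\textbf{Upper bound.} Since $\pb_n(\Sigma)$ is torsion-free, $\gd_{\calF_0}=\gd$. We have $\gd\le\max(n,3)$ by Eilenberg--Ganea, and indeed $\gd=n$ because $F_n(\Sigma)$ is an $n$-dimensional aspherical manifold that deformation retracts onto an $n$-dimensional complex, by induction over the fibrations with graph fibres. Next apply the Lück--Weiermann construction with the commensurator classes of infinite cyclic subgroups. This gives
\[\gd_{\calF_1}(G)\le\max\{\gd(G)+1,\ \gd_{\calF[C]}(N_G[C])\}.\]
Here $\calF[C]$ is the family generated by $C$ and the finite subgroups. We must show $\gd_{\calF[C]}(N_G[C])\le n+1$. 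As in the finite-type argument, we work through the fibration: the commensurator $N_G[C]$ of a cyclic subgroup $C=\langle g\rangle$ of a normally poly-free group projects to commensurators at each stage, and in a free group the commensurator of a nontrivial cyclic subgroup is cyclic. Induction on $n$ should show that $N_G[C]/C$ (after passing to a suitable finite-index quotient) has $\gd\le n$, hence $\gd_{\calF[C]}(N_G[C])\le n$. Since infinitely generated free groups are still free, the finite-type argument goes through essentially verbatim, or by a direct-limit argument: $\pb_n(\Sigma)$ is the union of the $\pb_n(\Sigma_i)$ over an exhaustion, and each commensurator lives in some $\pb_n(\Sigma_i)$ up to the final step. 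Altogether $\gd_{\calF_1}\le n+1$.

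\textbf{Lower bound.} Use the subgroup $A\cong\mathbb Z^n$. Restricting a model of $E_{\calF_1}G$ to $A$ gives $\gd_{\calF_1}(A)\le\gd_{\calF_1}(G)$. By \cite{PL}, $\gd_{\calF_1}(\mathbb Z^n)=n+1$, so $\gd_{\calF_1}(G)\ge n+1$.

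The main obstacle is the upper bound for the commensurators in the infinite-rank setting. There we rely on the structure of centralizers and commensurators in free groups of arbitrary rank, together with the splitting of the Fadell--Neuwirth fibration.
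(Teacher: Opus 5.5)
Your lower bound is the paper's argument: embed the pure braid group of a compact subsurface, chosen so that no complementary component is a disc, and apply \cref{Virtually:dimension:Z} to a $\Z^n$ inside it. Your value $\gd(\pb_n(\Sigma))=n$ is also what the paper uses; it quotes \cite[Theorem 4.3]{2025arXiv250610706L}. Note that $F_n(\Sigma)$ is a $2n$-manifold, not an $n$-dimensional one.

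The genuine gap is the step you yourself call the main obstacle: bounding $\gd_{\calF_1[C]}(N_G[C])$. The quotient $N_G[C]/C$ is not defined, because $C$ need not be normal in its commensurator. Nothing in your proposal produces a subgroup $C'$ commensurable with $C$ with $N_G[C]=N_G(C')$. In finite type this identification is \cref{commen:equal:norma}, which is deduced from results on finite-type mapping class groups. That is why the finite-type argument does not go through ``verbatim''.

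Your fallback is false as stated: a commensurator need not lie in any $\pb_n(\Sigma_i)$. For $n=2$, the full twist of the two strands inside a small disc $D$ is normalized by every braid that drags $D$ rigidly around a loop of $\Sigma$. The images of these braids under the forgetful map $\pb_2(\Sigma)\to\pb_1(\Sigma)=\pi_1(\Sigma)$ exhaust the infinitely generated group $\pi_1(\Sigma)$. By contrast, $\pb_2(\Sigma_i)$ maps into the finitely generated group $\pi_1(\Sigma_i)$.

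The ingredient the paper supplies is L\"uck's condition C. \cref{pure:braid:group:condition:c:inifinite} proves it by induction along the Fadell--Neuwirth sequence. If $h$ survives in the base, one uses torsion-freeness and induction; if $h$ lies in the free fibre, conjugation preserves the unique maximal cyclic subgroup containing $h$. Condition C gives $N_G[\langle h\rangle]=\bigcup_{k\ge1}N_G(\langle h^{k!}\rangle)$, an increasing union of normalizers. Then \cref{luck:condition:C} combines $\gd(\pb_n(\Sigma))\le n$ with the Weyl group bound $\gdfin(W_G(H))\le n-1$ of \cref{upper:bound:weil:infinite:type} to give $\gd_{\calF_1}(\pb_n(\Sigma))\le n+1$; the extra dimension pays for the union.

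Your exhaustion idea can be repaired in the same spirit:
\begin{enumerate}[1)]
\item Write $N_G[C]=\bigcup_i N_{G_i}[C]$ with $G_i=\pb_n(\Sigma_i)$.
\item In each finite-type $G_i$, \cref{commen:equal:norma} and \cref{upper:bound:weil} give $\gd_{\calF_1[C]}(N_{G_i}[C])\le n-1$.
\item A telescope over this countable increasing union costs one more dimension. This works because every member of $\calF_1[C]$ is cyclic, hence lies in some $N_{G_i}[C]$.
\end{enumerate}
Some argument of this kind must be given; ``induction on $n$ should show'' is not one.

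Finally, for $n=1$ your lower bound gives nothing. Since $\Z\in\calF_1$, a point is a model and $\gd_{\calF_1}(\Z)=0$; also \cref{Virtually:dimension:Z} requires $k<n$. There one needs instead that $\pb_1(\Sigma)$ contains a non-abelian free group, and non-abelian free groups have $\calF_1$-dimension $2$. The paper's proof also passes over this case.
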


To establish the theorem above, it was necessary to show that the pure braid group \(\pb_n(\Sigma)\) satisfies condition~C. Namely, for every element of infinite order \(h \in \pb_n(\Sigma)\) and every \(g \in \pb_n(\Sigma)\), the equation 
\[
g h^l g^{-1} = h^m
\]
implies \(\lvert l \rvert = \lvert m \rvert\). Once this condition is verified, \cref{luck:condition:C} reduces the problem to computing the proper geometric dimension of the Weyl groups of virtually cyclic subgroups.

\subsection*{Outline of the paper}
In \cref{prelimi} we collect classical results on surface braid groups that will be used later, most notably the Fadell--Neuwirth fibration. In the same section we also recall the L\"uck--Weiermann pushout construction, which we use to build classifying spaces \(E_{\mathcal{F}_n}G\) inductively. In \cref{virt:abelian:dimension:surface:braid:groups} we prove our main result, \cref{gd:f1:surface:braid}, computing the \(\mathcal{F}_n\)-geometric dimension. Finally, in \cref{virt:abelian:dimension:surface:braid:groups:inf} we determine the virtually cyclic dimension for infinite-type surface braid groups.

\subsection*{Acknowledgments} The third author thanks Luis Jorge Sánchez Saldaña 
and Jesús Hernández Hernández for helpful discussions, and the Institute for Research and the Department of Geometry and Topology of the University of Seville, when part of this work was completed. The first author was partially supported by the project PID2024-155800NC32, and the second author was partially supported by the project PID2024-157173NBI00, both funded by MCIN/AEI/10.13039/501100011033 and by FEDER, EU. The second author also aknowledges partial support from “Programa operativo PPIT-FEDER Andalucía 2021-2027" SOL2024-31596 and SOL2024-31708. The third author's work 
was supported by the UNAM Postdoctoral Program (POSDOC) and by DGAPA-UNAM through 
grant PAPIIT~IN102426.

\section{Preliminaries}\label{prelimi}

\subsection{Surface braid groups} In this subsection we recall some results on surface braid groups that we need later.

\begin{proposition}\cite{Fadell1962}\cite[Lemma 1.27, Exercise 1.4.1 ]{kassel2008}
Let \( \Sigma \) be a connected surface, possibly non-orientable or of infinite type, possibly with punctures, and without boundary. The map
\[
\rho: F_{m+1}(\Sigma) \to F_m(\Sigma),
\]
defined by forgetting the last point, i.e.,
\[
\rho(x_1, x_2, \dots, x_{m+1}) = (x_1, x_2, \dots, x_m),
\]
is a fibration with fiber homeomorphic to \( \Sigma - Q_m \) where \( Q_m = \{p_1, p_2, \dots, p_m\} \) is a set of \( m \) distinct points on \( \Sigma \).
\end{proposition}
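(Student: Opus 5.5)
The plan is to prove the classical Fadell--Neuwirth statement by showing that $\rho$ is a locally trivial fiber bundle. Since $F_m(\Sigma)$ is a manifold, hence paracompact, local triviality then gives the Hurewicz fibration property.

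\emph{Step 1: local models.} Fix a base point $q=(q_1,\dots,q_m)\in F_m(\Sigma)$. Since $\Sigma$ is a surface without boundary, choose pairwise disjoint closed coordinate discs $D_1,\dots,D_m\subset\Sigma$ with $q_i$ in the interior of $D_i$. Let
\[
U=\operatorname{int}D_1\times\cdots\times\operatorname{int}D_m\subset F_m(\Sigma),
\]
an open neighbourhood of $q$.

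\emph{Step 2: continuously varying homeomorphisms.} For a closed disc $D$ I will build a continuous map $\theta\colon \operatorname{int}D\to\operatorname{Homeo}(D)$ such that $\theta_y(q_D)=y$ and each $\theta_y$ is the identity on $\partial D$. Identify $D$ with the unit disc and $q_D$ with $0$. For $y$ in the open disc, let $\theta_y$ send each radial segment from $0$ to $z\in\partial D$ linearly onto the segment from $y$ to $z$. This is continuous in $(y,x)$, and its inverse is given by the analogous formula, so it is a homeomorphism. Extend each $\theta^{(i)}_{y_i}$ by the identity outside $D_i$. The discs are disjoint and every map fixes $\partial D_i$, so for $y=(y_1,\dots,y_m)\in U$ we get a homeomorphism
\[
\Theta_y=\theta^{(1)}_{y_1}\circ\cdots\circ\theta^{(m)}_{y_m}\colon \Sigma\to\Sigma .
\]
It depends continuously on $y$ (as does its inverse) and satisfies $\Theta_y(q_i)=y_i$.

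\emph{Step 3: trivialization.} Define
\[
\Phi\colon U\times(\Sigma-Q_m)\to\rho^{-1}(U),\qquad \Phi(y,x)=(y,\Theta_y(x)),
\]
where $Q_m=\{q_1,\dots,q_m\}$. This is well defined because $x\notin Q_m$ implies $\Theta_y(x)\neq y_i$ for every $i$. Its inverse is $(y,x_{m+1})\mapsto (y,\Theta_y^{-1}(x_{m+1}))$, which is continuous by the joint continuity from Step 2. $\Phi$ commutes with the projections to $U$, so $\rho$ is locally trivial with fiber $\Sigma-Q_m$. Up to homeomorphism this fiber does not depend on the chosen $m$ points, since $\Sigma$ is connected and a homeomorphism isotopic to the identity can move any $m$ distinct points to any other $m$ distinct points. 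This last fact uses that a connected manifold of dimension at least $2$ is $m$-point homogeneous, and it holds equally for non-orientable surfaces and surfaces of infinite type. Punctures cause no trouble because they are simply not points of $\Sigma$.

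\emph{Main obstacle.} The delicate point is the joint continuity in Step 2: the disc-pushing homeomorphisms must depend continuously on the parameter $y$, and so must their inverses. Any explicit radial formula handles this. Finally, the hypothesis of no boundary matters: it guarantees that every $q_i$ has a coordinate disc neighbourhood in the interior. For surfaces with boundary one would instead pass to the interior, which the paper already notes does not change the braid groups.
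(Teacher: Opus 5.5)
Your proposal is correct and is essentially the standard argument behind the references the paper cites for this proposition (Fadell--Neuwirth, and Kassel--Turaev, Lemma~1.27); the paper itself gives no proof. That argument, like yours, shows $\rho$ is locally trivial over a product of disjoint open coordinate discs, using radial disc-pushing homeomorphisms that fix the boundary and move the centre continuously. For the joint continuity of the inverses that you flag, note that $\theta_y(x)=x+(1-|x|)y$. Moreover, for compact $K\subset\operatorname{int}D$ the map $(y,x)\mapsto(y,\theta_y(x))$ on $K\times D$ is a continuous bijection of compact Hausdorff spaces, hence a homeomorphism.
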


This fibration can be used to prove that the configuration spaces \( F_m(\Sigma) \) are Eilenberg-MacLane spaces. Using this fact and the associated long exact sequence in homotopy groups, we obtain the following result:

\begin{theorem}[Fadell-Neuwirth]
\label{Fadell:Neuwirth}
Let \(\Sigma\) be a connected surface without boundary, which may be non-orientable, of infinite type, or with punctures, and assume further that \(\Sigma\) is neither the two-sphere nor the real projective plane. Then, there exists the following short exact sequence:
\[
1 \to \pi_1(\Sigma - Q_n) \to \pb_{n+1}(\Sigma) \xrightarrow{\varphi} \pb_n(\Sigma) \to 1,
\]
where \( Q_n \) denotes a set of \( n \) distinct points on \( \Sigma \).
\end{theorem}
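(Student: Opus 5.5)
The statement to prove is the Fadell--Neuwirth short exact sequence. I will derive it directly from the fibration in the preceding Proposition, applied with $m=n$:
\[
\Sigma - Q_n \longrightarrow F_{n+1}(\Sigma) \xrightarrow{\ \rho\ } F_n(\Sigma).
\]
Its long exact sequence of homotopy groups contains the segment
\[
\pi_2(F_n(\Sigma)) \to \pi_1(\Sigma - Q_n) \to \pi_1(F_{n+1}(\Sigma)) \xrightarrow{\rho_*} \pi_1(F_n(\Sigma)) \to \pi_0(\Sigma - Q_n).
\]
By definition $\pi_1(F_k(\Sigma)) = \pb_k(\Sigma)$, and $\varphi=\rho_*$ is the map forgetting the last strand.

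Surjectivity of $\varphi$ is the easy end. The fiber $\Sigma - Q_n$ is path-connected, since removing finitely many points from a connected surface does not disconnect it. So $\pi_0$ of the fiber is trivial and $\rho_*$ is onto.

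For injectivity on the left, it suffices to show $\pi_2(F_n(\Sigma))=0$ for all $n\ge1$. I will prove the stronger statement that $F_n(\Sigma)$ is aspherical, by induction on $n$ using the same fibration.

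\emph{Base case.} $F_1(\Sigma)=\Sigma$. Since $\Sigma$ is a connected surface without boundary other than $S^2$ and $\mathbb{RP}^2$, its universal cover is $\mathbb{R}^2$. This holds for closed surfaces of non-positive Euler characteristic, for punctured surfaces, and for infinite-type surfaces, since any non-compact surface is homotopy equivalent to a graph. Hence $\pi_k(\Sigma)=0$ for $k\ge2$.

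\emph{Fibers.} Each fiber $\Sigma - Q_m$ with $m\ge1$ is a non-compact connected surface, hence homotopy equivalent to a graph and aspherical. This holds even when $\Sigma$ is $S^2$ or $\mathbb{RP}^2$, but the base case fails there.

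\emph{Inductive step.} In the long exact sequence, $\pi_k$ of the fiber and of the base vanish for $k\ge2$, so $\pi_k(F_{n+1}(\Sigma))=0$ for $k\ge2$.

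With $\pi_2(F_n(\Sigma))=0$, the map $\pi_1(\Sigma-Q_n)\to \pb_{n+1}(\Sigma)$ is injective. Exactness in the middle gives that its image is $\ker\varphi$, which yields the stated short exact sequence.

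The main point requiring care is the base case: asphericity of $\Sigma$ itself, including the infinite-type and non-orientable cases. This is exactly where the sphere and projective plane are excluded, since $\pi_2(S^2)=\pi_2(\mathbb{RP}^2)=\mathbb{Z}\neq0$. I will argue it by the classical fact that every surface other than $S^2$ and $\mathbb{RP}^2$ has contractible universal cover. In the non-compact case this comes from the deformation retraction onto a graph; in the closed case it comes from uniformization, Euclidean or hyperbolic.

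A minor technical point is that the Proposition gives a genuine (Serre) fibration, which the long exact sequence requires. Basepoints can be chosen compatibly, because $F_n(\Sigma)$ is path-connected when $\Sigma$ is a connected surface. This follows inductively from the surjection on $\pi_0$ given by the connected fibers.
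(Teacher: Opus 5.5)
Your proposal is correct and follows the same route the paper indicates. You use the fibration inductively to show each $F_m(\Sigma)$ is aspherical, with the base case being exactly where $S^2$ and $\mathbb{R}P^2$ are excluded; injectivity then comes from $\pi_2(F_n(\Sigma))=0$ and surjectivity from the path-connectedness of the fiber in the long exact homotopy sequence — the paper states this in a single sentence, and you supply the details faithfully.
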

The virtual cohomological dimension of surface braid groups has been established in the literature; see, for example, \cite[Theorem 1.1]{2025arXiv250610706L} and the references therein.
\begin{theorem}\label{vcd:surfaces:braid:groups:0}
Let $\Sigma_{g,k}^r$ denote a connected surface of finite type of genus $g$, with $k$ punctures and $r$ boundary components, and let $n \geq 1$. The virtual cohomological dimension of \( \mathrm{B}_n(\Sigma_{g,k}^r) \) is given by:
\[
\vcd(\mathrm{B}_n(\Sigma_{g,k}^r)) = 
\begin{cases}
    n - 2 & \text{if } g = -1, \, k + r = 0, \, n \geq 3, \\
    n - 3 & \text{if } g = 0, \, k + r = 0, \, n \geq 4, \\
    n - 1 & \text{if } g = 0, \, k + r = 1, \, n \geq 1, \\
    n & \text{if } g = 0, \, k + r \geq 2, \, n \geq 1, \\
    n + 1 & \text{if } g = 1, \, k + r = 0, \, n \geq 1, \\
    n + 1 & \text{if } \lvert g \rvert \geq 2, \, k + r = 0, \, n \geq 1, \\
    n & \text{if } \lvert g \rvert \geq 1, \, k + r \geq 1, \, n \geq 1, \\
    0 & \text{otherwise}.
\end{cases}
\]
\end{theorem}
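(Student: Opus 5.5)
The statement is a compilation of known computations, so the plan is to recover each line from the Fadell--Neuwirth sequence (\cref{Fadell:Neuwirth}) together with standard facts about cohomological dimension of extensions. First reductions: $B_n(\Sigma)\cong B_n(\Sigma-\partial\Sigma)$, so boundary components may be replaced by punctures and only $k+r$ matters. Also $\pb_n(\Sigma)$ has finite index in $B_n(\Sigma)$, hence $\vcd(B_n(\Sigma))=\vcd(\pb_n(\Sigma))$. Whenever $\pb_n(\Sigma)$ is torsion free, i.e.\ $\Sigma\neq S^2,\mathbb{RP}^2$, this equals $\cd(\pb_n(\Sigma))$. The small cases collected in ``otherwise'' (for instance $S^2$ with $n\le 3$, $\mathbb{RP}^2$ with $n\le 2$, the disc with $n=1$) will be checked by hand, since there the braid group is finite.

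\emph{Upper bounds.} Iterating \cref{Fadell:Neuwirth} gives a tower of extensions
\[1\to\pi_1(\Sigma-Q_m)\to\pb_{m+1}(\Sigma)\to\pb_m(\Sigma)\to 1,\qquad \pb_1(\Sigma)=\pi_1(\Sigma).\]
I will use subadditivity $\cd(G)\le\cd(N)+\cd(G/N)$ along this tower.
\begin{itemize}
\item If $k+r\ge1$, every $\pi_1(\Sigma-Q_m)$ is free, so $\cd\le n$.
\item If moreover $\Sigma$ is the disc, the first term $\pi_1(\Sigma)$ is trivial, and we get $\cd\le n-1$.
\item If $\Sigma$ is closed and aspherical ($g=1$ or $|g|\ge2$), the base $\pi_1(\Sigma)$ has $\cd 2$ and the remaining $n-1$ fibres are free, so $\cd\le n+1$.
\end{itemize}
For $S^2$ and $\mathbb{RP}^2$ I will use the classical splittings: $\pb_n(S^2)\cong\Z/2\times\pb_{n-3}(S^2-Q_3)$ for $n\ge3$ (fix three points by a Möbius transformation), and a finite-index torsion-free subgroup of $\pb_n(\mathbb{RP}^2)$ isomorphic to $\pb_{n-2}(\mathbb{RP}^2-Q_2)$ for $n\ge 3$ (via the fibration $F_n(\mathbb{RP}^2)\to F_2(\mathbb{RP}^2)$ and lifting to $S^2$). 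These reduce both cases to the punctured case, giving $n-3$ and $n-2$.

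\emph{Lower bounds.}
\begin{itemize}
\item Open surfaces: I will exhibit a free abelian subgroup of the expected rank ($n$, or $n-1$ for the disc) generated by commuting braids. Take nested simple closed curves, each enclosing one more strand, and use either the loop of the $i$-th strand around a puncture or non-separating curve, or the full twist of the first $i$ strands. This is essentially the content of \cref{free:abelian:subgroup:cd:0}.
\item Torus: $\pb_n(T^2)\cong\Z^2\times\pb_{n-1}(T^2-\{pt\})$, so the rank count gives $n+1$.
\item Closed genus $|g|\ge2$: this is the main obstacle, because there is no obvious $\Z^{n+1}$. Here I will argue cohomologically. The kernel $K=\pb_{n-1}(\Sigma-\{pt\})$ of the projection to $\pi_1(\Sigma)$ is of type $F$ with $\cd K=n-1$ (by the open case), and the quotient is a Poincaré duality group of dimension $2$ (orientable case; in the non-orientable case pass to the orientation cover). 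In the Lyndon--Hochschild--Serre spectral sequence with coefficients in $\Z G$, the corner term $E_2^{2,n-1}=H^2(\pi_1\Sigma;H^{n-1}(K;\Z G))$ cannot be hit by any differential. It is nonzero because $H^{n-1}(K;\Z K)\neq0$ and $H^2$ of a $PD^2$ group is the coinvariants functor, which does not kill this induced module. Hence $H^{n+1}(G;\Z G)\neq0$ and $\cd=n+1$.
\item $S^2$ and $\mathbb{RP}^2$: the lower bounds follow from the reductions above, combined with the punctured-sphere lower bound $n$ for $k+r\ge2$.
\end{itemize}
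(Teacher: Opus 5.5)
Your proposal is correct. It necessarily takes a different route from the paper, because the paper does not prove this table: it imports it from \cite[Theorem~1.1]{2025arXiv250610706L} and the references therein.

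You rebuild it directly, in four steps:
\begin{enumerate}
\item Upper bounds come from iterating \cref{Fadell:Neuwirth} together with subadditivity of $\cd$ in extensions.
\item The spherical cases reduce to open surfaces via the classical product decompositions $\pb_n(S^2)\cong\Z/2\times\pb_{n-3}(S^2-Q_3)$ and $\pb_n(T^2)\cong\Z^2\times\pb_{n-1}(T^2-\{pt\})$, and via the index-$8$ subgroup $\pb_{n-2}(\mathbb{RP}^2-Q_2)\le\pb_n(\mathbb{RP}^2)$. That subgroup exists because $\pi_2(F_2(\mathbb{RP}^2))=0$, which is what your lift to $S^2$ provides.
\item Lower bounds for open surfaces, the torus and the spherical cases come from free abelian subgroups.
\item Closed surfaces with $|g|\ge2$ are handled by a Lyndon--Hochschild--Serre corner argument.
\end{enumerate}
The citation buys brevity. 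Your version buys a self-contained proof, and it shows exactly where rank arguments stop working. For closed hyperbolic surfaces there is no abelian subgroup of rank $\cd$ to exhibit: already for $n=1$, $\pi_1(\Sigma)$ has $\cd 2$ but contains no $\Z^2$. This is why the paper, which only uses the rank-$n$ subgroup coming from an annulus, gets just $\cd+r-1$ as a lower bound in the closed case of \cref{gd:f1:surface:braid}.

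A few points to make precise when you write it up.

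\textbf{Projective plane lower bound.} In the $\mathbb{RP}^2$ case the lower bound comes from the open non-orientable surface $\Sigma_{-1,2}$, i.e.\ the line $|g|\ge1$, $k+r\ge1$. It does not come from a punctured sphere. Your open-surface argument already covers this, so it is only a mislabel.

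\textbf{The corner argument.} State explicitly why $H^{n-1}(K;\Z G)$ is an induced $\Z Q$-module. As a left $K$-module, $\Z G=\bigoplus_{x\in Q}\Z K\,s(x)$ for a set-theoretic section $s$ of $G\to Q$. The $Q$-action permutes these summands freely and transitively. Moreover $H^{n-1}(K;-)$ commutes with direct sums because $K$ is of type $F$. Hence the $Q$-coinvariants of $H^{n-1}(K;\Z G)$ are $H^{n-1}(K;\Z K)$, which is nonzero because $K$ is of type $FP$ with $\cd K=n-1$. In the non-orientable case, passing to the orientation cover is harmless, since you only need $\cd G\ge \cd G'$. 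Alternatively, twisting an induced module by the orientation character leaves it induced, so no cover is needed.

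\textbf{Open surfaces.} Independence of the nested twists inside $\pb_n(\Sigma)$ is cleanest as the paper does it. Embed an annulus $N\subset\Sigma$ with no complementary component a disc, apply \cref{free:abelian:subgroup:cd}, and transport the subgroup using the injectivity in \cref{lemma:subsurface-injective}. The ``non-separating curve'' variant is unnecessary.
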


\begin{theorem}\cite[Corollary 4.4]{2025arXiv250610706L}\label{dc:infi:braid}
Let $\Sigma$ be a  connected surface of infinite type, then $\cd(\mathrm{B}_n(\Sigma))=n$.
\end{theorem}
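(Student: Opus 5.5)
The statement to prove is \cref{dc:infi:braid}: for a connected surface $\Sigma$ of infinite type, $\cd(\mathrm{B}_n(\Sigma))=n$. I would prove $\cd(\pb_n(\Sigma))=n$ first, and then pass to $\mathrm{B}_n(\Sigma)$.

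\emph{Reductions.} Since $\mathrm{B}_n(\Sigma)\cong \mathrm{B}_n(\Sigma-\partial\Sigma)$, and likewise for pure braids, we may assume $\Sigma$ has empty boundary. An infinite-type surface is non-compact, so it is neither $S^2$ nor $\mathbb{RP}^2$. Every non-compact connected surface deformation retracts onto a graph. Hence $\pi_1(\Sigma)$, and also $\pi_1(\Sigma-Q_m)$ for every finite set $Q_m$, is free, possibly of infinite rank. In particular each of these groups has cohomological dimension at most $1$.

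\emph{Upper bound.} I would argue by induction on $m$ using the Fadell--Neuwirth short exact sequence of \cref{Fadell:Neuwirth},
\[
1\to \pi_1(\Sigma-Q_m)\to \pb_{m+1}(\Sigma)\to \pb_m(\Sigma)\to 1 .
\]
Subadditivity of cohomological dimension in extensions gives $\cd(\pb_{m+1}(\Sigma))\le \cd(\pb_m(\Sigma))+1$. Starting from $\pb_1(\Sigma)=\pi_1(\Sigma)$, which has $\cd\le 1$, this yields $\cd(\pb_n(\Sigma))\le n$.

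\emph{Passing to the full braid group.} The fibrations show that $F_n(\Sigma)$ is aspherical. Hence so is its quotient $F_n(\Sigma)/\mathfrak S_n$ by the free action, since it is covered by $F_n(\Sigma)$. This quotient is a $2n$-manifold, so it is a finite-dimensional $K(\mathrm{B}_n(\Sigma),1)$. Therefore $\mathrm{B}_n(\Sigma)$ has finite cohomological dimension and is torsion-free. Serre's theorem then says that a torsion-free group and its finite-index subgroup $\pb_n(\Sigma)$ have the same cohomological dimension. So it suffices to compute $\cd(\pb_n(\Sigma))$.

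\emph{Lower bound.} I would exhibit a copy of $\Z^n$ inside $\pb_n(\Sigma)$. Since $\Sigma$ has infinite type, $\pi_1(\Sigma)$ is nontrivial, so there is a simple closed curve $c$ representing a nontrivial, hence infinite-order, element of the free group $\pi_1(\Sigma)$. Choose $n$ pairwise disjoint parallel copies $c_1,\dots,c_n$ of $c$ inside a collar annulus, and put the $i$-th base point on $c_i$. Let $\beta_i$ be the pure braid in which the $i$-th point travels once around $c_i$ while the other points stay fixed. The supports are disjoint, so the $\beta_i$ pairwise commute. The inclusion $F_n(\Sigma)\hookrightarrow\Sigma^n$ induces a homomorphism $\pb_n(\Sigma)\to\pi_1(\Sigma)^n$ sending $\beta_1^{a_1}\cdots\beta_n^{a_n}$ to $(c^{a_1},\dots,c^{a_n})$. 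Because $c$ has infinite order, this image is trivial only when all $a_i=0$. Thus $\langle\beta_1,\dots,\beta_n\rangle\cong\Z^n$, and $\cd(\pb_n(\Sigma))\ge n$.

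\emph{Main obstacle.} I expect the only delicate point to be the justification of the homotopy-theoretic inputs for infinite-type surfaces. These are that $F_n(\Sigma)$ is aspherical and that the Fadell--Neuwirth sequence is exact with a free kernel even when that kernel has infinite rank. Both are covered by the fibration statement and \cref{Fadell:Neuwirth}, which allow infinite type. The remaining steps are formal.
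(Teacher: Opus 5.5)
Your proof is correct. The paper gives no argument for this statement; it imports it from \cite[Corollary~4.4]{2025arXiv250610706L}. So the comparison below is with how the paper handles the neighbouring results.

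\textbf{Upper bound and passage to $B_n(\Sigma)$.} Your upper bound is the same Fadell--Neuwirth induction the paper runs in \cref{caso:base:dimension}. There it is done for $\gd$; for $\cd$ you only need subadditivity in extensions, which needs no finiteness of the possibly infinitely generated free kernel. Your passage from $\pb_n(\Sigma)$ to $B_n(\Sigma)$, via the finite-dimensional aspherical quotient $F_n(\Sigma)/\mathfrak{S}_n$ and Serre's theorem, is the standard one.

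\textbf{Lower bound.} Here you take a genuinely different route from the paper's lower-bound arguments.
\begin{itemize}
\item The paper embeds $\pb_n(\Sigma_0^2)$ into $\pb_n(\Sigma)$ using the subsurface-injectivity result \cite[Proposition~4.2]{2025arXiv250610706L}. It then uses the full twists $t_2,\dots,t_{n+1}$ of \cref{free:abelian:subgroup:cd}, separated by crossing numbers.
\item You map a torus $T^n\to F_n(\Sigma)$ and detect the resulting $\Z^n$ through the map $\pb_n(\Sigma)\to\pi_1(\Sigma)^n$ induced by $F_n(\Sigma)\hookrightarrow\Sigma^n$.
\end{itemize}
Your version is self-contained and avoids any injectivity theorem for subsurface inclusions. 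It only needs closed curves of infinite order in $\pi_1(\Sigma)$, which is automatic here since $\pi_1(\Sigma)$ is a nontrivial free group. The paper's twists $t_2,\dots,t_n$, by contrast, are supported in a disc and so are invisible to your map. The paper reuses that twist subgroup uniformly across all its cases.

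\textbf{One point to tidy up.} If $\Sigma$ is non-orientable and the chosen $c$ is one-sided, its regular neighbourhood is a M\"obius band rather than a collar annulus. Then the step ``take $n$ disjoint parallel copies in a collar'' does not make sense as stated. Replace $c$ by the boundary of that M\"obius band: it is two-sided and represents $c^2\neq 1$ in the torsion-free group $\pi_1(\Sigma)$. In fact, your torus argument only needs $n$ pairwise disjoint closed curves, each of infinite order in $\pi_1(\Sigma)$.
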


\subsection{Lück-Weiermann  construction}
In this subsection we introduce a particular construction of Lück-Weiermann \cite[Theorem 2.3]{LW12} that we need later.

 \begin{definition}
     
 Let $\calF\subset\calG$ be two families of subgroups of $G$. Let $\sim$ be  an equivalence relation in $\calG-\calF$. We say that $\sim$ is strong  if the following is satisfied 
 
\begin{enumerate}[a)]
    \item If  $H, K \in \calG-\calF$ with $H\subseteq K$, then $H\sim K$;
    \item If $H, K \in \calG-\calF$ and $g\in G$, then $H\sim K$ if and only if $gHg^{-1} \sim gKg^{-1}$.
\end{enumerate}
 \end{definition}
 
 \begin{definition}
     Let $G$ be a group and $L, K$ be subgroups of $G$. We say that $L$ and $K$ are commensurable if $L\cap K$ has finite index in both $L$ and $K$.
 \end{definition}
 
\begin{definition}
Let $G$ be a group and let $H$ be a subgroup of $G$. We define the commensurator of $H$ in $G$ as  $$N_{G}[H]:=\{g\in G \mid gHg^{-1}\text{ is commensurable with }H \}.$$
\end{definition}

      Let $G$ be a group, $H$ a subgroup of $G$ and $n\geq0$. Consider the following nested families of $G$,  ${\calF_n}\subseteq {\calF_{n+1}}$, let $\sim$ the equivalence relation in ${\calF_{n+1}}-{\calF_n}$ given by commensurability. It is easy to check that this is a strong equivalence relation.

We introduce the following notation:
\begin{itemize}
    \item We denote by $({\calF_{n+1}}-{\calF_n})/\sim$ the equivalence classes  in  ${\calF_{n+1}}-{\calF_n}$. Given $L\in ({\calF_{n+1}}-{\calF_n})$ we denote by $[L]$ its equivalence class.
    
    \item Given $[L]\in ({\calF_{n+1}}-{\calF_n})/\sim$, we define the next family of subgroups of  $N_{G}[L]$ $${\calF_{n+1}}[L]:=\{K\leq N_G[L]| K\in  ({\calF_{n+1}}-{\calF_n}), [K]=[L]\}\cup ({\calF_n}\cap N_G[L]).$$
    
\end{itemize}

\begin{theorem}\cite[Theorem 2.3]{LW12}\label{Luck:weiermann}  Let $G$ be a group. Let $I$ be a complete set of representatives of conjugation classes in $(\calF_{n+1}-\calF_{n})/\sim$. Choose arbitrary $N_G[H]$-CW-models for $E_{\calF_{n}\cap N_{G}[H]}N_{G}[H]$ and $E_{ \calF_{n+1}[H]}N_{G}[H]$, and an arbitrary model for  $E_{\mathcal{F}_{n}}G$. Consider the  following $G$-push-out 
  \[
\begin{tikzpicture}
  \matrix (m) [matrix of math nodes,row sep=3em,column sep=4em,minimum width=2em]
  {
     \displaystyle\bigsqcup_{[H]\in I} G\times_{N_{G}[H]}E_{\calF_{n}\cap N_{G}[H]}N_{G}[H] & E_{\calF_n}G \\
      \displaystyle\bigsqcup_{[H]\in I} G\times_{N_{G}[H]}E_{ \calF_{n+1}[H]}N_{G}[H] & X \\};
  \path[-stealth]
    (m-1-1) edge node [left] {$\displaystyle\bigsqcup_{[H]\in I}id_{G}\times_{N_G}f_{[H]}$} (m-2-1) (m-1-1.east|-m-1-2) edge  node [above] {$i$} (m-1-2)
    (m-2-1.east|-m-2-2) edge node [below] {} (m-2-2)
    (m-1-2) edge node [right] {} (m-2-2);
\end{tikzpicture}
\]
such that $f_{[H]}$ is cellular $N_{G}[H]$-map for every $[H]\in I$ and either (1) $i$ is an inclusion of $G$-CW-complexes, or (2) such that every map $f_{[H]}$ is an inclusion of $N_{G}[H]$-CW-complexes for every $[H]\in I$ and $i$ is a cellular $G$-map. Then $X$ is a model for $E_{\calF_{n+1}}G$.
 \end{theorem}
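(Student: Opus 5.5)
The plan is to verify directly the defining property of $E_{\calF_{n+1}}G$ for $X$: every isotropy group of $X$ lies in $\calF_{n+1}$, and for every $K\le G$ the fixed point set $X^K$ is contractible if $K\in\calF_{n+1}$ and empty otherwise.

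\emph{Isotropy.} The cells of $X$ come either from $E_{\calF_n}G$, whose isotropy lies in $\calF_n$, or from the induced spaces $G\times_{N_G[H]}E_{\calF_{n+1}[H]}N_G[H]$. The isotropy groups of the latter are conjugates of members of $\calF_{n+1}[H]\subseteq\calF_{n+1}$. So $X$ is a $G$-CW-complex with isotropy in $\calF_{n+1}$.

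\emph{Fixed points of the pushout.} In case (1) the top horizontal map is a cofibration. In case (2) the left vertical map is a cofibration. In either case the square is a homotopy pushout. Taking $K$-fixed points commutes with this pushout, because the $K$-fixed points of a $G$-CW-inclusion are again a CW-inclusion. Hence $X^K$ is the homotopy pushout of the $K$-fixed points of the other three corners. I will use the standard identification
\[
\bigl(G\times_{N}Y\bigr)^K=\bigsqcup_{gN\in G/N,\ g^{-1}Kg\le N} g\,Y^{g^{-1}Kg},
\]
where $N=N_G[H]$.

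\emph{Case analysis on $K$.}

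\begin{enumerate}[(i)]
\item Suppose $K\in\calF_n$. Then $(E_{\calF_n}G)^K$ is contractible. Fix a coset $gN$ with $g^{-1}Kg\le N_G[H]$. Since $\calF_n\cap N_G[H]\subseteq\calF_{n+1}[H]$, both $(E_{\calF_n\cap N_G[H]}N_G[H])^{g^{-1}Kg}$ and $(E_{\calF_{n+1}[H]}N_G[H])^{g^{-1}Kg}$ are contractible. So the left vertical map restricted to $K$-fixed points is a bijection on components with contractible components, hence a homotopy equivalence. Its homotopy pushout with a contractible space is contractible, so $X^K\simeq *$.

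\item Suppose $K\in\calF_{n+1}-\calF_n$. Then the two top corners have empty $K$-fixed sets, so $X^K$ equals the $K$-fixed set of the bottom-left corner. A summand $g\,(E_{\calF_{n+1}[H]}N_G[H])^{g^{-1}Kg}$ is nonempty, and then contractible, exactly when $g^{-1}Kg\le N_G[H]$ and $g^{-1}Kg\sim H$.
\begin{itemize}
\item \emph{Existence.} Because $I$ is a complete set of representatives, there are $H\in I$ and $g$ with $g^{-1}Kg\sim H$. Moreover $g^{-1}Kg\le N_G[g^{-1}Kg]=N_G[H]$, since the commensurator depends only on the commensurability class and contains the group itself.
\item \emph{Uniqueness.} If $g^{-1}Kg\sim H$ and $g'^{-1}Kg'\sim H'$ with $H,H'\in I$, then $H$ and $H'$ are conjugate classes, so $H=H'$. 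Strongness then gives that $g^{-1}g'$ preserves the class $[H]$, i.e.\ $g^{-1}g'\in N_G[H]$. So there is exactly one such coset $gN_G[H]$.
\end{itemize}
Hence $X^K$ is a single contractible piece.

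\item Suppose $K\notin\calF_{n+1}$. Then all three corners have empty $K$-fixed sets, so $X^K=\emptyset$.
\end{enumerate}

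\emph{Main obstacles.} The step I expect to need the most care is the homotopy-pushout claim for fixed points in case (2), where $i$ is only a cellular map. I would handle it by replacing $E_{\calF_n}G$ with the mapping cylinder of $i$, which does not change the $G$-homotopy type of the pushout. The other delicate point is the uniqueness of the coset in case (ii), which uses both properties of a strong equivalence relation.
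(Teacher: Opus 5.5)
Your proposal is correct and follows essentially the fixed-point argument of Lück--Weiermann's proof of \cite[Theorem 2.3]{LW12}; the paper cites that result and does not reprove it. As there, you check isotropy, use that $K$-fixed points commute with a pushout along a $G$-CW inclusion, and split into the cases $K\in\calF_n$, $K\in\calF_{n+1}-\calF_n$ (where exactly one pair $([H],gN_G[H])$ contributes a contractible summand) and $K\notin\calF_{n+1}$; the mapping-cylinder replacement you plan for case (2) is unnecessary, because there the left vertical map $\bigsqcup \mathrm{id}_G\times_{N_G[H]}f_{[H]}$ is already an inclusion of $G$-CW-complexes, so the same argument applies with the roles of the two legs swapped.
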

 \begin{remark}\label{dimension:LW}
The conditions in \cref{Luck:weiermann} are not restrictive. For instance, to satisfy the  condition $(2)$, we can use the equivariant cellular approximation theorem to assume that the maps $i$ and $f_{[L]}$  are cellular maps for all $[L]\in I$,  and to make the function $f_{[L]}$ an inclusion for every $[L]\in I$,  we can replace the spaces by the mapping cylinders. See \cite[Remark 2.5]{LW12}. 
\end{remark}
Following the notation of \cref{Luck:weiermann} we have \begin{corollary}\label{lw:gd:upper:bound}
$\gd_{\calF_{n+1}}(G)\leq \max\{ \gd_{\calF_{n}}(G)+1, \gd_{ \calF_{n+1}[L]}(N_{G}[L])\mid L\in I\}.$
\end{corollary}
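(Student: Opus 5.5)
This corollary follows directly from the pushout in \cref{Luck:weiermann}, so the plan is to build that pushout from low-dimensional models and count dimensions. Fix $d=\gd_{\calF_n}(G)$ and, for each representative $[L]\in I$, set $e_L=\gd_{\calF_{n+1}[L]}(N_G[L])$. We may assume all these numbers are finite, since otherwise the inequality is vacuous. We then choose models of minimal dimension: $E_{\calF_n}G$ of dimension $d$ and $E_{\calF_{n+1}[L]}N_G[L]$ of dimension $e_L$.

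For the top-left corner we also need a model for $E_{\calF_n\cap N_G[L]}N_G[L]$. Since $\calF_n\cap N_G[L]$ is the restricted family, the restriction of $E_{\calF_n}G$ to $N_G[L]$ is such a model, and it has dimension $d$. Next we need the maps. There is a $G$-map $i$ from $\bigsqcup G\times_{N_G[L]}E_{\calF_n\cap N_G[L]}N_G[L]$ to $E_{\calF_n}G$, obtained by inducing the identity (or by the universal property). There are also $N_G[L]$-maps $f_{[L]}$, which exist by the universal property because every isotropy group of the source lies in $\calF_n\cap N_G[L]\subseteq\calF_{n+1}[L]$. By equivariant cellular approximation we make all these maps cellular.

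To satisfy condition (2) of \cref{Luck:weiermann}, we replace each target $E_{\calF_{n+1}[L]}N_G[L]$ by the mapping cylinder of $f_{[L]}$, as in \cref{dimension:LW}. This cylinder is $N_G[L]$-homotopy equivalent to the original target, so it is still a model. Its dimension is $\max\{d+1,e_L\}$, and $f_{[L]}$ is now an inclusion. The theorem then says the pushout $X$ is a model for $E_{\calF_{n+1}}G$.

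It remains to bound $\dim X$. Inducing up via $G\times_{N_G[L]}-$ preserves dimension. $X$ is obtained from $E_{\calF_n}G$, of dimension $d$, by attaching the relative cells of the cylinders, all of dimension at most $\max\{d+1,e_L\}$. Hence
\[
\dim X\le\max\{d+1,\ e_L \mid L\in I\},
\]
which is exactly the claimed inequality.

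The only delicate point is the dimension of the mapping cylinder. Using the cylinder is what forces the term $d+1$ rather than $d$, and that is precisely the $+1$ appearing in the bound. Everything else consists of routine verifications that the cellular structures and the identification of restricted families behave as expected.
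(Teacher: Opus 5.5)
Your proposal is correct and follows the argument the paper intends: the paper states the corollary as an immediate consequence of \cref{Luck:weiermann}, combined with the mapping-cylinder replacement of \cref{dimension:LW}, and counting cells in the resulting push-out gives the bound $\max\{d+1, e_L\}$ just as you do. Taking the restriction of $E_{\calF_n}G$ to $N_G[L]$ as the model for $E_{\calF_n\cap N_G[L]}N_G[L]$ is a clean choice, since it guarantees that corner has dimension at most $d$ and that $i$ is already cellular.
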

\subsection*{The push-out of a union of families}
 The following lemma will be also useful.
\begin{lemma}\cite[Lemma~4.4]{DQR11}\label{lemma:union:families}
Let $G$ be a group and $\calF,\, \calG$ be two families of subgroups of $G$.  Choose arbitrary $G$-$CW$-models for $E_{\calF} G$, $E_{\calG}G$ and $E_{\calF\cap\calG} G$. Then, the $G$-$CW$-complex $X$ given by the cellular homotopy $G$-push-out
\[
\xymatrix{
E_{\calF\cap\calG}G \ar[r] \ar[d] & E_{\calF}G \ar[d]\\
E_{\calG}G \ar[r] & X
}
\]
is a model for $E_{\calF\cup\calG}G$.
\end{lemma}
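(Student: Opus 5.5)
The statement to prove is \cref{lemma:union:families}: the cellular homotopy push-out $X$ of $E_{\calF}G \leftarrow E_{\calF\cap\calG}G \to E_{\calG}G$ is a model for $E_{\calF\cup\calG}G$. Note first that $\calF\cup\calG$ is again a family, since conjugation and passing to subgroups preserve each of $\calF$ and $\calG$. The plan is to use the standard fixed-point characterization: a $G$-CW-complex $Y$ is a model for $E_{\mathcal H}G$ if and only if two conditions hold. First, every isotropy group lies in $\mathcal H$. Second, for every $K\le G$ the fixed set $Y^K$ is contractible when $K\in\mathcal H$ and empty otherwise. So I would verify these two properties for $X$.

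\textbf{Isotropy.} Realize the homotopy push-out as the double mapping cylinder. Replace the two maps out of $E_{\calF\cap\calG}G$ by cellular $G$-maps, which is possible by equivariant cellular approximation. Then
\[
X = E_{\calF}G \cup \bigl(E_{\calF\cap\calG}G\times[0,1]\bigr)\cup E_{\calG}G ,
\]
and this is a $G$-CW-complex whose cells come from the three pieces, with $G$ acting trivially on $[0,1]$. Each cell therefore has isotropy in $\calF$, in $\calG$, or in $\calF\cap\calG$, and hence in $\calF\cup\calG$.

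\textbf{Fixed points.} Taking $K$-fixed points commutes with this construction, because $[0,1]$ carries the trivial action. So $X^K$ is the double mapping cylinder of
\[
(E_{\calF}G)^K \leftarrow (E_{\calF\cap\calG}G)^K \to (E_{\calG}G)^K .
\]
There are four cases.
\begin{itemize}
\item If $K\notin\calF\cup\calG$, all three fixed sets are empty, so $X^K=\emptyset$.
\item If $K\in\calF\cap\calG$, all three are contractible. The homotopy push-out of contractible spaces is contractible, by gluing or by the Seifert--van Kampen and Mayer--Vietoris arguments combined with Whitehead's theorem for CW-complexes.
\item If $K\in\calF\setminus\calG$, the middle and right terms are empty. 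Then $X^K=(E_{\calF}G)^K$ (up to the empty cylinder), which is contractible.
\item If $K\in\calG\setminus\calF$, the same argument applies symmetrically.
\end{itemize}

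The only real point of care is that the homotopy push-out must be formed with genuine $G$-CW-structures. One must also check that fixed points commute with the double mapping cylinder construction, which holds because fixed points of a $G$-CW-complex are subcomplexes and commute with colimits along cellular inclusions. The contractibility in the case $K\in\calF\cap\calG$ is the main step, but it is standard.
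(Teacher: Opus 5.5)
Your proposal is correct and is essentially the standard proof. The paper gives no argument of its own and simply cites \cite[Lemma~4.4]{DQR11}, whose proof is exactly this one: identify $X^K$ with the double mapping cylinder of the $K$-fixed sets, and treat the cases $K\in\calF\cap\calG$, $K\in\calF\setminus\calG$, $K\in\calG\setminus\calF$ and $K\notin\calF\cup\calG$ separately. Your separate isotropy check is harmless but redundant, since $X^K=\emptyset$ for $K\notin\calF\cup\calG$ already forces every isotropy group into $\calF\cup\calG$.
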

With the notation \cref{lemma:union:families} we have the following
\begin{corollary}\label{upper:bound:gd:two:families}
  $\gd_{\calG\cup\calF}(G)\leq\max\{\gd_{\calF}(G), \gd_{\calG}(G), \gd_{\calG\cap\calF}(G)+1\}.$  
\end{corollary}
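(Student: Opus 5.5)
The plan is to apply \cref{lemma:union:families} to models of minimal dimension, and to realize the cellular homotopy $G$-push-out as an explicit double mapping cylinder whose dimension can be read off directly. First note that $\calF\cup\calG$ is indeed a family, since both $\calF$ and $\calG$ are closed under conjugation and under taking subgroups. Set $a=\gd_{\calF}(G)$, $b=\gd_{\calG}(G)$ and $c=\gd_{\calF\cap\calG}(G)$. If any of these is infinite there is nothing to prove, so assume all three are finite. Choose $G$-CW-models $X_{\calF}$, $X_{\calG}$, $X_{\calF\cap\calG}$ for $E_{\calF}G$, $E_{\calG}G$, $E_{\calF\cap\calG}G$ of dimensions $a$, $b$, $c$ respectively.

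Next we produce the two maps in the push-out. Every isotropy group of $X_{\calF\cap\calG}$ lies in $\calF\cap\calG$, hence in $\calF$ and in $\calG$. The universal property of $X_{\calF}$ and $X_{\calG}$ then gives $G$-maps $\alpha\colon X_{\calF\cap\calG}\to X_{\calF}$ and $\beta\colon X_{\calF\cap\calG}\to X_{\calG}$. By the equivariant cellular approximation theorem we may assume that $\alpha$ and $\beta$ are cellular.

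We then model the homotopy push-out by the double mapping cylinder
\[
X = X_{\calF}\cup_{\alpha} \bigl(X_{\calF\cap\calG}\times[0,1]\bigr)\cup_{\beta} X_{\calG},
\]
where $X_{\calF\cap\calG}\times\{0\}$ is glued to $X_{\calF}$ via $\alpha$, and $X_{\calF\cap\calG}\times\{1\}$ is glued to $X_{\calG}$ via $\beta$. Give $[0,1]$ the CW-structure with two $0$-cells and one $1$-cell, and let $G$ act trivially on it. Then $X_{\calF\cap\calG}\times[0,1]$ is a $G$-CW-complex whose open cells are of the form $e\times\{0\}$, $e\times\{1\}$ and $e\times(0,1)$, with the same isotropy as the cell $e$. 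Since $\alpha$ and $\beta$ are cellular, $X$ inherits a $G$-CW-structure. Its cells are:
\begin{enumerate}[(i)]
\item the cells of $X_{\calF}$, of dimension at most $a$;
\item the cells of $X_{\calG}$, of dimension at most $b$;
\item the cells $e\times(0,1)$, of dimension at most $c+1$.
\end{enumerate}
All isotropy groups belong to $\calF\cup\calG$.

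By \cref{lemma:union:families}, $X$ is a model for $E_{\calF\cup\calG}G$. Its dimension is at most $\max\{a,b,c+1\}$, which is the claimed bound. The only point requiring care is that the homotopy push-out in \cref{lemma:union:families} may be taken to be exactly this cellular double mapping cylinder, rather than some other model of possibly larger dimension. This is standard once $\alpha$ and $\beta$ are cellular, by the same mapping-cylinder argument as in \cref{dimension:LW}, so I expect no genuine obstacle.
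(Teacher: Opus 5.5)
Your proposal is correct and follows essentially the paper's approach. The paper gives no separate argument: it reads the bound directly off the cellular homotopy push-out of \cref{lemma:union:families}, applied to models of minimal dimension. Your proof makes that dimension count explicit via the double mapping cylinder, whose new cells $e\times(0,1)$ have dimension at most $\gd_{\calF\cap\calG}(G)+1$.
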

\subsection*{Nested families}

Given a group $G$ and two nested families $\calF\subseteq \calG$ of $G$, we will use the following propositions to bound the geometric dimension $\gd_{\calF}(G)$ using the geometric dimension $\gd_{\calG}(G)$.

\begin{proposition}\cite[Proposition 5.1 (i)]{LW12}\label{nested:families:lw}
 Let $G$ be a group and let $\calF$ and $\calG$ be
two families of subgroups such that $\calF\subseteq \calG$. Suppose for every $H\in \calG$ we have $\gd_{\calF\cap H}(H)\leq d$. Then $\gd_{\calF}(G)\leq \gd_{\calG}(G)+d.$
\end{proposition}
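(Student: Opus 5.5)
We prove the bound by building a model for $E_{\calF}G$ skeleton by skeleton over a minimal model for $E_{\calG}G$. Each equivariant cell of that model is "fattened" by a small model for the restricted family. The plan has three steps.

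\textbf{Setup.} Let $X$ be a model for $E_{\calG}G$ of dimension $m=\gd_{\calG}(G)$. Let $E$ be any model for $E_{\calF}G$, and put $Y=X\times E$ with the diagonal action. Every isotropy group of $Y$ lies in $\calF$. For $K\in\calF\subseteq\calG$ we have $Y^K=X^K\times E^K$, a product of two contractible spaces, while $Y^K=\emptyset$ for $K\notin\calF$. Hence $Y$ is a model for $E_{\calF}G$, but its dimension is not controlled.

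Let $p\colon Y\to X$ be the projection. Over an open cell $G/H\times e^k$ of $X$ with $H\in\calG$, the preimage is $G\times_H(\operatorname{res}^G_H E\times e^k)$. Moreover $\operatorname{res}^G_H E$ is a model for $E_{\calF\cap H}H$, since its $L$-fixed points for $L\le H$ are contractible exactly when $L\in\calF\cap H$. By hypothesis it is therefore $H$-homotopy equivalent to a model $E_H$ of dimension at most $d$.

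\textbf{Construction.} By induction on $k$ we build a $G$-CW-complex $Z_k$ of dimension at most $k+d$, together with a $G$-map $\phi_k\colon Z_k\to p^{-1}(X_k)$ that is a $G$-homotopy equivalence. Here $X_k$ is the $k$-skeleton of $X$. For each $k$-cell $G/H\times D^k$ of $X$ with attaching map $\alpha$:
\begin{itemize}
\item The preimage of $G/H\times S^{k-1}$ under $p$ receives an $H$-map from $E_H\times S^{k-1}$, obtained by composing an $H$-homotopy equivalence $E_H\simeq \operatorname{res}^G_H E$ with $\alpha$ on the first coordinate.
\item Composing with a $G$-homotopy inverse of $\phi_{k-1}$ and making the result cellular by equivariant cellular approximation gives a cellular attaching map $G\times_H(E_H\times S^{k-1})\to Z_{k-1}$.
\item We attach $G\times_H(E_H\times D^k)$ along this map; its cells have dimension at most $d+k$.
\end{itemize}
The map $\phi_k$ extends over the new pieces, because on each of them it is built from the equivalence $E_H\simeq\operatorname{res}^G_H E$ times the identity of $D^k$. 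The equivariant gluing lemma for pushouts along cofibrations then shows that $\phi_k$ is again a $G$-homotopy equivalence.

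\textbf{Conclusion and main obstacle.} Passing to the union gives $Z\simeq_G Y$, so $Z$ is a model for $E_{\calF}G$ of dimension at most $m+d$. This proves $\gd_{\calF}(G)\le\gd_{\calG}(G)+d$. The only delicate point is the compatibility of the attaching maps at each stage. We must keep $\phi_{k-1}$ a genuine $G$-homotopy equivalence so that we can transport attaching maps from $Y$ back to $Z_{k-1}$. We must also check that the gluing lemma applies in the equivariant setting, using that all the maps are cellular and hence cofibrations. The remaining steps are routine once this inductive comparison with the auxiliary model $Y$ is in place.
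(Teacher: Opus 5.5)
The paper does not prove this statement; it only quotes \cite[Proposition 5.1 (i)]{LW12}. Your proposal is a correct, self-contained proof. It uses the standard cell-by-cell replacement argument for estimates of this kind: each equivariant cell $G/H\times D^k$ of a minimal model for $E_{\calG}G$ is replaced by $G\times_H(E_H\times D^k)$, where $E_H$ is a $d$-dimensional model for $E_{\calF\cap H}H$. The key checks all go through:
\begin{itemize}
\item $X\times E$ is a model for $E_{\calF}G$.
\item Its part over a cell is $G\times_H(\operatorname{res}^G_H E\times e^k)$, and $\operatorname{res}^G_H E$ is a model for $E_{\calF\cap H}H$.
\item Isotropy groups stay in $\calF$.
\item The dimension count gives $\dim Z_m\le m+d$.
\end{itemize}

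One step is stated too loosely. Your attaching map $\beta$ is only $G$-homotopic to $\phi_{k-1}^{-1}\circ a\circ(\psi_H\times\mathrm{id})$. Here $a$ is the attaching map in $Y$ and $\psi_H\colon E_H\to\operatorname{res}^G_H E$ is the chosen equivalence. So $\phi_{k-1}\circ\beta$ and $a\circ(\psi_H\times\mathrm{id})$ agree only up to $G$-homotopy, and $\phi_{k-1}\cup(\psi_H\times\mathrm{id}_{D^k})$ is not literally a well-defined map. There are two ways to fix this:
\begin{itemize}
\item Absorb the homotopy in a collar of $D^k$.
\item Use that attaching along $G$-homotopic maps via the $G$-cofibration $G\times_H(E_H\times S^{k-1})\hookrightarrow G\times_H(E_H\times D^k)$ gives $G$-homotopy equivalent spaces. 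Then apply the strictly commuting equivariant gluing lemma.
\end{itemize}

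Note also that the cofibration property comes from these subcomplex inclusions, not from cellularity of the maps. Cellularity is what makes $Z_k$ a $G$-CW-complex.

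Finally, you do not need $\phi_k$ to extend $\phi_{k-1}$. Since $\dim X=m$, it is enough to have some $G$-homotopy equivalence $Z_k\simeq p^{-1}(X_k)$ at each stage, and no union or colimit is needed at the end: $Z=Z_m$.
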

\section{Classifying spaces of finite-type surface braid groups}\label{virt:abelian:dimension:surface:braid:groups}

In this section we compute the virtually abelian dimension of finite-type surface braid groups.

Before computing this dimension we need to prove some lemmas.
Let $G$ be a group and $H<G$. We denote $C_G(H)$, $N_G(H)$, and $W_G(H)=N_G(H)/H$  the centralizer, normalizer, and the Weyl group of $H$ respectively. 

Let $\Sigma_{g,k}^r$ denote a connected surface of finite type of genus $g$, with $k$ punctures and $r$ boundary components. In \cite[Remarks 8]{MR3382024}, it was observed that the inclusion $i\colon \Sigma_{g,k}^r \to \Sigma_{g,k+r}$ is a homotopy equivalence, and this induces a homotopy equivalence between their $n$-configuration spaces. In particular: 
\begin{lemma}\label{surfacebraid:puntures:boundaries}
    Let $\Sigma_{g,k}^r$ be the connected surface of genus $g$ with $r$ boundary components and  $k$ punctures. Then $\pb_n(\Sigma_{g,k}^r) \cong \pb_n(\Sigma_{g,k+r})$.
\end{lemma}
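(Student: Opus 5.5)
The plan is to build the homotopy equivalence by hand and then pass to fundamental groups. Write $\Sigma=\Sigma_{g,k}^r$ and let $\Sigma^\circ=\Sigma-\partial\Sigma$ be its interior. Since $\Sigma$ is a compact surface with finitely many punctures, a collar neighbourhood of $\partial\Sigma$ exists. The interior $\Sigma^\circ$ is homeomorphic to $\Sigma_{g,k+r}$: each boundary circle, once removed, leaves an open end homeomorphic to $S^1\times(0,1]$, i.e.\ a puncture. Hence it suffices to show that the inclusion $j\colon\Sigma^\circ\hookrightarrow\Sigma$ induces an isomorphism $\pi_1(F_n(\Sigma^\circ))\to\pi_1(F_n(\Sigma))$, where $\pb_n(\Sigma)$ is defined as $\pi_1(F_n(\Sigma))$.

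The key construction is a one-parameter family of embeddings pushing $\Sigma$ into its interior. Choose a collar $c\colon\partial\Sigma\times[0,1)\to\Sigma$ and define $h_t\colon\Sigma\to\Sigma$ for $t\in[0,1]$ as follows. On the collar, $h_t(c(x,s))=c(x,\tfrac t2+(1-\tfrac t2)s)$ for $s\in[0,1)$. Off the collar, $h_t$ is the identity; this is continuous after a standard reparametrization near $s=1$ (use a function $\lambda_t(s)$ that is the identity for $s\ge 1/2$). Then:
\begin{enumerate}[(i)]
\item each $h_t$ is injective and $h_0=\mathrm{id}$;
\item $h_1(\Sigma)\subset\Sigma^\circ$;
\item $h_t(\Sigma^\circ)\subset\Sigma^\circ$ for all $t$.
\end{enumerate}
Because every $h_t$ is injective, it induces a map $h_t^{(n)}\colon F_n(\Sigma)\to F_n(\Sigma)$ applied coordinatewise, and this map preserves $F_n(\Sigma^\circ)$. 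Let $\phi=h_1^{(n)}\colon F_n(\Sigma)\to F_n(\Sigma^\circ)$. The family $h_t^{(n)}$ is then a homotopy from $j^{(n)}\circ\phi$ to $\mathrm{id}_{F_n(\Sigma)}$. Restricted to $F_n(\Sigma^\circ)$, it is a homotopy from $\phi\circ j^{(n)}$ to $\mathrm{id}_{F_n(\Sigma^\circ)}$ inside $F_n(\Sigma^\circ)$. So $j^{(n)}$ is a homotopy equivalence, and taking fundamental groups gives $\pb_n(\Sigma_{g,k}^r)\cong\pb_n(\Sigma^\circ)\cong\pb_n(\Sigma_{g,k+r})$. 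The homotopies move basepoints, but that only changes the isomorphisms by conjugation along paths, which is harmless because the configuration spaces are path-connected.

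The only delicate point is that the homotopy must go through injective maps, so that it stays inside configuration space. A mere deformation retraction of $\Sigma$ onto a subsurface would not do this, which is why an isotopy through embeddings built from the collar is used. Alternatively, one could cite \cite[Remarks 8]{MR3382024} directly, as the paper does.
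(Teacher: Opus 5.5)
Your argument is correct and follows the same route as the paper, which simply invokes \cite[Remarks 8]{MR3382024}: the inclusion between the bordered surface and the punctured surface induces a homotopy equivalence of configuration spaces, and one then passes to fundamental groups. Your collar isotopy through embeddings supplies the justification that the paper's citation leaves implicit, and you are right that a bare homotopy equivalence of surfaces would not by itself give a homotopy equivalence of configuration spaces.
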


\begin{lemma}\label{property:pass:subgroup}
Let $G$ be a group such that for every virtually free abelian subgroup $H \leq G$
there exists a subgroup $H' \leq G$ commensurable with $H$ satisfying
$N_G[H] = N_G(H')$. Then every subgroup of $G$ has the same property.
\end{lemma}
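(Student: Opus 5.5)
Let $K\le G$ be a subgroup and let $H\le K$ be a virtually free abelian subgroup of $K$. The plan is to produce $H''\le K$, commensurable with $H$, such that $N_K[H]=N_K(H'')$. We start from the hypothesis on $G$ applied to $H$, viewed as a subgroup of $G$. This gives a subgroup $H'\le G$ commensurable with $H$ and satisfying $N_G[H]=N_G(H')$. The candidate is
\[
H'' := H'\cap K.
\]
The argument then has three steps.

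\textbf{Step 1: $H''$ is commensurable with $H$.} Since $H\le K$, we have $H\cap H'\le H'\cap K\le H'$. The subgroup $H\cap H'$ has finite index in $H'$, so $H''$ has finite index in $H'$. Commensurability is an equivalence relation, so $H''$ is commensurable with $H'$, and therefore with $H$. It is also virtually free abelian, being a subgroup of the virtually free abelian group $H'$.

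\textbf{Step 2: the commensurator restricts.} Directly from the definition, $N_K[H]=N_G[H]\cap K$: for $k\in K$, the condition that $kHk^{-1}$ is commensurable with $H$ does not depend on the ambient group. Hence
\[
N_K[H]=N_G(H')\cap K.
\]

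\textbf{Step 3: compare with $N_K(H'')$.} One inclusion is immediate. If $k\in N_G(H')\cap K$, then $k$ normalizes both $H'$ and $K$, so it normalizes $H'\cap K=H''$. This gives $N_K[H]\subseteq N_K(H'')$.

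For the reverse inclusion, let $k\in N_K(H'')$. Then $kHk^{-1}$ is commensurable with $kH''k^{-1}=H''$, which is commensurable with $H$. So $k\in N_K[H]$ by transitivity of commensurability. This gives $N_K(H'')\subseteq N_K[H]$, and hence equality.

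The only point needing care is the finite-index claim in Step 1, together with transitivity of commensurability. Both are standard.

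Note that the whole argument is general: it never uses that $H$ is virtually abelian, beyond observing that $H''$ lies in the same class of subgroups as $H$.
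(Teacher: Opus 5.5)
Your proof is correct and is essentially the paper's argument. It uses the same candidate $H'\cap K$, the same inclusion $N_K[H]=K\cap N_G(H')\subseteq N_K(H'\cap K)$, and the reverse inclusion $N_K(H'\cap K)\subseteq N_K[H'\cap K]=N_K[H]$, which is what your transitivity step in Step 3 amounts to; the only cosmetic difference is that in Step 1 you show $H'\cap K$ has finite index in $H'$, whereas the paper checks directly that $H\cap H'$ has finite index in $H'\cap K$.
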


\begin{proof}
Let $L \leq G$ and let $H \leq L$ be a virtually free abelian subgroup. By
hypothesis, there exists a subgroup $H' \leq G$ commensurable with $H$ such that
$N_G[H] = N_G(H')$. It follows that
\[
  N_L[H] = L \cap N_G[H] = L \cap N_G(H') \subseteq N_L(H' \cap L).
\]
We claim that $H' \cap L$ is commensurable with $H$. Indeed, $H \cap (H' \cap L)
= H \cap H'$, which is commensurable with $H$ by hypothesis. It remains to show
that $H \cap H'$ has finite index in $H' \cap L$. This follows from the inequality
\[
  [H' \cap L : H \cap H'] \leq [H' : H \cap H']  < \infty,
\]
which holds since $H$ and $H'$ are commensurable. This proves the claim.

Since $H' \cap L$ is commensurable with $H$ and $H \leq L$, we have $N_L[H] =
N_L[H' \cap L]$. Combined with the inclusion established above,
\[
  N_L[H' \cap L] = N_L[H] \subseteq N_L(H' \cap L).
\]
Since the reverse inclusion $N_L(H' \cap L) \subseteq N_L[H' \cap L]$ always
holds, we conclude that $N_L(H' \cap L) = N_L[H' \cap L]$, as required.
\end{proof}

The following result will be used throughout the remainder of the paper.

\begin{theorem}\cite[Proposition 2.2]{Paris:Rolfsen}\label{lemma:subsurface-injective}
Let \( M \) be a surface different from the sphere and the projective plane, and let \( N \subseteq M \) be a subsurface defined as the closure of an open subset of \( M \), such that every boundary component of \( N \) is either a boundary component of \( M \) or lies in the interior of \( M \). Assume also that none of the connected components of the closure of \( M - N \) is a disk. Then the induced morphism
\[
\psi \colon B_n(N) \to B_n(M)
\]
is injective.
\end{theorem}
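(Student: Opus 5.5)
The statement is \cref{lemma:subsurface-injective}, the injectivity theorem of Paris--Rolfsen. I would prove it by working with mapping class groups and the Birman exact sequence rather than with braids directly. The plan is to first establish injectivity on pure braid groups and then pass to full braid groups.

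\textbf{Step 1: reduction to pure braid groups.} The inclusion $N\subseteq M$ induces a map of configuration spaces $F_n(N)/\mathfrak{S}_n\to F_n(M)/\mathfrak{S}_n$, which is compatible with the projections to $\mathfrak{S}_n$. This gives a morphism of the short exact sequences $1\to \pb_n\to B_n\to \mathfrak{S}_n\to 1$ that is the identity on $\mathfrak{S}_n$. By the five lemma, it therefore suffices to show that $\pb_n(N)\to \pb_n(M)$ is injective.

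\textbf{Step 2: injectivity on pure braid groups, by induction on $n$.} I would use the Fadell--Neuwirth short exact sequences (\cref{Fadell:Neuwirth}), after replacing boundary components by punctures via \cref{surfacebraid:puntures:boundaries}. Some care is needed there, because the boundary of $N$ lying in the interior of $M$ must be kept track of. The forgetful maps for $N$ and $M$ are compatible, so we get a commutative ladder
\[
1\to \pi_1(N-Q_{n-1})\to \pb_n(N)\to \pb_{n-1}(N)\to 1
\]
mapping to the analogous sequence for $M$. The right-hand vertical map is injective by induction. The base case $n=1$ is the statement that $\pi_1(N)\to\pi_1(M)$ is injective. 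By a diagram chase, it then remains to prove that the left-hand map
\[
\pi_1(N-Q_{n-1})\to\pi_1(M-Q_{n-1})
\]
is injective. These maps on fundamental groups are the heart of the argument.

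\textbf{Step 3: the $\pi_1$-injectivity.} The surface $N-Q_{n-1}$ is a subsurface of $M-Q_{n-1}$. The components of the closure of the complement are exactly those of $\overline{M-N}$. By hypothesis none of these is a disk, and since the points of $Q_{n-1}$ lie in $N$, removing them does not create any new disk components. I would argue as follows.
\begin{itemize}
\item Each boundary curve of $N$ lying in the interior of $M$ is essential: it bounds neither a disk nor, after puncturing, a once-punctured disk disjoint from $N$.
\item Then $M-Q_{n-1}$ is obtained from $N-Q_{n-1}$ by gluing, along circles, surfaces that are not disks.
\item By van Kampen, $\pi_1$ is built as an amalgamated product or HNN extension over $\mathbb{Z}$.
\item The edge groups inject because each gluing circle is nontrivial in both pieces. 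A complementary piece could fail this only if it were a disk, which is excluded; an annulus piece gives an HNN extension or amalgamation over an isomorphism, which is harmless.
\item Standard Bass--Serre theory then gives injectivity of each vertex group, in particular of $\pi_1(N-Q_{n-1})$.
\end{itemize}

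\textbf{Main obstacle.} I expect the hardest part to be keeping the Fadell--Neuwirth sequence valid for both $N$ and $M$, since it requires the surfaces to be neither the sphere nor the projective plane. $M$ is excluded from being $S^2$ or $\mathbb{RP}^2$ by hypothesis. $N$ has boundary unless $N=M$, in which case the statement is trivial.

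A second delicate point is a possible closed component of $N$, and the requirement that the base points be chosen compatibly in the ladder. Finally, a complementary component that is a Möbius band needs a check: its core curve is nontrivial, so its boundary circle is nontrivial in the piece, and van Kampen still applies.
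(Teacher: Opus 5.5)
The paper gives no proof of this statement. It is quoted from Paris--Rolfsen and used as a black box, and it is only ever applied to pure braid groups, e.g.\ to get $\pb_n(\Sigma_0^2)\hookrightarrow\pb_n(\Sigma)$.

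Your argument is a correct, self-contained proof along standard lines:
\begin{itemize}
\item the reduction to pure braids is a valid kernel chase;
\item the ladder chase is valid;
\item the graph-of-groups argument gives injectivity of $\pi_1(N-Q_{n-1})\to\pi_1(M-Q_{n-1})$, since a boundary circle of a connected surface is null-homotopic only when that surface is a disk.
\end{itemize}
Compared with the citation, your route shows exactly where each hypothesis enters, and it directly yields the pure-braid version the paper actually uses. The no-disk condition is used only in Step 3. The hypothesis $M\neq S^2,\mathbb{R}P^2$ is used only to make the bottom row left exact, via $\pi_2(F_{n-1}(M))=0$. That hypothesis is indispensable. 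For $N=D\subset M=\mathbb{R}P^2$ and $n=2$, Step 3 holds, since $\pi_1(D-p_1)\to\pi_1(\mathbb{R}P^2-p_1)$ is $\Z\xrightarrow{\times 2}\Z$. Yet $\pb_2(D)\cong\Z\to\pb_2(\mathbb{R}P^2)\cong\mathcal Q_8$ is not injective.

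\textbf{Points to tighten.}

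\emph{Fadell--Neuwirth for $N$.} The chase only needs two things: exactness of the top row at $\pb_n(N)$, which the fibration always provides, and injectivity of $\pi_1(M-Q_{n-1})\to\pb_n(M)$. Left exactness for $N$ is never used, so the obstacle you anticipate on the $N$ side does not arise.

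\emph{First bullet of Step 3.} It is inaccurate as stated. If $N$ is a disk, its boundary bounds a disk. If $M$ has punctures, a boundary curve may bound a once-punctured disk in the complement. It is also unnecessary. On the complementary side you only need that no piece is a disk. On the $N$ side, a boundary circle is trivial in $\pi_1(N-Q_{n-1})$ only when $N-Q_{n-1}$ is a disk, and then there is nothing to prove.

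\emph{Connectedness of $N$.} $N$ must be connected, as is implicit in the statement. Take $M=T^2$ and $N=(T^2-\mathrm{int}\,D')\sqcup D''$, where $D''\subset\mathrm{int}\,D'$ is a smaller disk, and $n=1$ with base point in the first component. Then $\overline{M-N}$ is an annulus, but $\pi_1(T^2-\mathrm{int}\,D')\to\pi_1(T^2)$ is not injective. For connected $N$, a closed component can only occur when $N=M$, so that worry disappears.
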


\begin{remark}
The conditions imposed on the subsurface \( N \subseteq M \) follow the convention established in~\cite{Paris:Rolfsen}, ensuring that every boundary component of \( N \) is either a boundary component of \( M \) or fully contained in the interior of \( M \).
\end{remark}

\begin{lemma}\label{commen:equal:norma}Let $\Sigma$ be a surface of finite type (possibly with finitely many punctures) and \(\chi(\Sigma) \leq 0\).
    For every infinite virtually abelian subgroup \(L\) of \(\pb_{m}(\Sigma)\), there is $H<\pb_{m}(\Sigma)$ commensurable with $L$ such that $N_{\pb_{m}(\Sigma)}[L]=N_{\pb_{m}(\Sigma)}(H).$
\end{lemma}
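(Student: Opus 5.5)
The natural strategy is to reduce to an ambient group where the property ``commensurator of a virtually abelian subgroup equals the normalizer of a commensurable subgroup'' is already known, and then use \cref{property:pass:subgroup} to pass it down to $\pb_m(\Sigma)$.

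The candidate ambient group is a mapping class group. Write $\Sigma=\Sigma_{g,k}^r$; by \cref{surfacebraid:puntures:boundaries} we may assume $\Sigma$ has no boundary. The Birman exact sequence embeds $\pb_m(\Sigma)$ into the pure mapping class group $\pmcg(\Sigma')$ of a surface $\Sigma'$ obtained from $\Sigma$ by adding or marking $m$ punctures. This requires $\chi(\Sigma)\le 0$, with care in the closed torus and Klein bottle cases, where the Birman map may fail to be injective. In those cases one instead uses a boundary/puncture capping trick, or treats them directly via \cref{Fadell:Neuwirth} together with the fact that $\pb_m$ of these surfaces is poly-(surface group).

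Inside $\pmcg(\Sigma')$, which we may replace by a torsion-free finite-index subgroup if needed, I would prove the property for an infinite virtually abelian $L$ as follows.

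\textbf{Step 1 (canonical reduction system).} Pass to a finite-index free abelian subgroup $A\le L$ consisting of pure mapping classes in the sense of Ivanov. Let $\sigma$ be the canonical reduction system of $A$. For $A'$ commensurable with $A$, the canonical reduction systems agree, since they are determined by any finite-index subgroup. Hence any $g\in N[L]$ satisfies $g\sigma=\sigma$, so $N[L]\le \mathrm{Stab}(\sigma)$.

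\textbf{Step 2 (the subgroup $H$).} Cutting along $\sigma$, the elements of $A$ restrict to each component either as the identity or as pseudo-Anosov maps, and also contribute Dehn twists about the curves of $\sigma$. Define $H$ to be the subgroup of $\mathrm{Stab}(\sigma)$ generated by:
\begin{itemize}
\item the twists $T_c$ for those $c\in\sigma$ along which $A$ twists nontrivially;
\item for each component $R$ on which $A$ acts pseudo-Anosovly, the maximal virtually cyclic subgroup of $\pmcg(R)$ containing that pseudo-Anosov, which is the stabilizer of its stable and unstable laminations, intersected appropriately.
\end{itemize}
Then $H$ is canonically determined by the commensurability class of $A$: it is the ``maximal'' element of the class in the relevant sense. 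It is commensurable with $L$, and conjugation by $g\in N[L]$ permutes its generating data, so $N[L]\le N(H)$. The reverse inclusion $N(H)\le N[H]=N[L]$ always holds.

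\textbf{Step 3 (main obstacle).} The hardest point is verifying that $H$ is genuinely commensurable with $L$. In particular, the pseudo-Anosov pieces and the twist pieces combine into a subgroup of finite index over $A$. This uses:
\begin{itemize}
\item that the centralizer of a pseudo-Anosov is virtually cyclic;
\item that $\mathrm{Stab}(\sigma)$ maps with kernel the twist group onto the product of mapping class groups of the pieces.
\end{itemize}
One may also need to shrink $H$ to its intersection with a characteristic torsion-free finite-index subgroup so that it is well defined.

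Finally, apply \cref{property:pass:subgroup} to $\pb_m(\Sigma)\le\pmcg(\Sigma')$. That lemma produces, for the subgroup $\pb_m(\Sigma)$, a subgroup $H\cap\pb_m(\Sigma)$ commensurable with $L$ whose normalizer equals $N_{\pb_m(\Sigma)}[L]$, which is exactly the statement.
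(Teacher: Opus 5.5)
Your overall frame for $\chi(\Sigma)<0$ is the paper's: embed in a mapping class group, then descend with \cref{property:pass:subgroup}. There are, however, two genuine gaps.

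\textbf{The case $\chi(\Sigma)=0$.} The point-pushing map $\pb_m(\Sigma)\to\pmcg(\Sigma')$ has kernel the image of $\pi_1(\mathrm{Homeo}_0(\Sigma))$. That image is nontrivial, indeed infinite, not only for the torus and Klein bottle but also for the open annulus and the open M\"obius band. For instance, with $m=1$ you would be mapping $\pi_1(\Sigma_{0,2})\cong\Z$ into the trivial group $\pmcg(\Sigma_{0,3})$. So after your reduction to the boundaryless case, all four surfaces of Euler characteristic zero escape your argument. The fallback you mention (a capping trick, or the poly-surface-group structure) says nothing about commensurators versus normalizers.

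What is actually needed is a way to push the property through a central extension with infinite kernel. The paper first embeds $B_n(\Sigma_0^2)\hookrightarrow B_n(T)$ and $B_n(MB)\hookrightarrow B_n(\text{Klein bottle})\hookrightarrow B_{2n}(T)$, reducing via \cref{property:pass:subgroup} to $B_n(T)$. It then uses that $B_n(T)/Z(B_n(T))$ embeds in $\mcg(T;n)$, together with \cite[Lemma 4.16, Proposition 4.15]{Rita:Porfirio:Luis} for the central extension by $Z(B_n(T))\cong\Z^2$.

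\textbf{Steps 2--3 for $\chi(\Sigma)<0$.} Here you try to reprove the mapping-class-group statement that the paper simply cites (\cite[Proposition 4.8]{MR3767218}, \cite[Proposition 5.9]{Nucinkis:Petrosyan}, \cite[Theorem 4.10]{Rita:Porfirio:Luis}): some finite-index $L_0\le L$ satisfies $N[L]=N(L_0)$. Your $H$, generated by the individual twists and the individual component pseudo-Anosov groups, is in general \emph{not} commensurable with $L$. Take $A=\langle T_{c_1}T_{c_2}\rangle$ with $c_1,c_2$ disjoint and non-isotopic. Then $H\supseteq\langle T_{c_1},T_{c_2}\rangle\cong\Z^2$ while $A\cong\Z$. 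Likewise, a single element that is pseudo-Anosov on two components of $\Sigma'-\sigma$ yields rank $2$ from rank $1$. So Step 3 is not merely the hard step; it is false for this $H$.

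There is a second problem in Step 2. The pseudo-Anosov pieces live in $\pmcg(R)$, and the cutting map $\mathrm{Stab}(\sigma)\to\prod_R\pmcg(R)$ has the twist group along $\sigma$ as kernel. Lifting the pieces is therefore either non-canonical (a choice of section) or adds all twists (full preimage). Hence ``$N[L]$ permutes the generating data'' does not yet give $N[L]\le N(H)$.

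A correct construction must produce an $N[L]$-invariant member of the commensurability class of $L$. In the pure multitwist case, for example, one can take the isolator of $A$ in the twist lattice $\langle T_c : c\in\sigma\rangle$, rather than the group generated by the $T_c$. Alternatively, as the paper does for the sphere, one can take a finite-index subgroup of $L$ obtained from condition C and finite generation of $N[L]$ via \cref{comm:equal:normalizer}.
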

\begin{proof}
First suppose that \(\chi(\Sigma) < 0\) we have by \cite[Section 2.4]{MR3382024} an inclusion $i\colon B_m(\Sigma) \to \mcg(\Sigma; m)$ where $\mcg(\Sigma;m)$ denotes the
mapping class group of $\Sigma$ with $m$ punctures. In what follows we identify $B_m(\Sigma)$ with $i(B_m(\Sigma))$.

Let $L$ be an infinite virtually abelian subgroup  of \(\pb_{m}(\Sigma)\). By \cite[Proposition 4.8]{MR3767218}\cite[Proposition 5.9]{Nucinkis:Petrosyan}\cite[Theorem 4.10] {Rita:Porfirio:Luis} there is finite index subgroup $K$ of $L$ such that $N_{\mcg(\Sigma,m)}[L]=N_{\mcg(\Sigma,m)}(K)$. The claim follows from \cref{property:pass:subgroup}.


Now suppose that $\chi(\Sigma) = 0$, so $\Sigma$ is the torus $T$, the
Klein bottle $K$, the cylinder $\Sigma_{0}^2$, or the Möbius band $MB$.
The cylinder $\Sigma_{0}^2$ embeds in $T$ as a subsurface satisfying the
hypotheses of \cref{lemma:subsurface-injective} (the complement is an
annulus), and similarly $MB$ embeds in $K$ with complement a Möbius band.
Hence $B_n(\Sigma_{0}^2)$ injects into $B_n(T)$ and $B_n(MB)$ injects
into $B_n(K)$ by \cref{lemma:subsurface-injective}. Moreover, $B_n(K)$
embeds as a subgroup of $B_{2n}(T)$ (see, for example,
\cite{MR2927811}). Therefore, by \cref{property:pass:subgroup}, it
suffices to prove the claim for $B_n(T)$.

If $n=1$ then $B_1(T)=\pi_1(T)=\Z^2$ and the claim is clear. For $n \geq 2$, there is an exact sequence \cite{FM12}
\[
  1 \to Z(B_n(T)) \to B_n(T) \to \mcg(T;n) \to \mcg(T) \to 1.
\]
In particular, $B_n(T)/Z(B_n(T))$ embeds as a subgroup of $\mcg(T;n)$. By
\cite[Theorem 4.10]{Rita:Porfirio:Luis}, every virtually abelian subgroup $H \leq \mcg(T;n)$ admits a
finite index subgroup $H' \leq H$ such that
$N_{\mcg(T;n)}[H] = N_{\mcg(T;n)}(H')$; hence \cite[Lemma 4.16]{Rita:Porfirio:Luis}
implies that $B_n(T)/Z(B_n(T))$ satisfies the same property.

Finally, consider the central short exact sequence
\[
  1 \to Z(B_n(T)) \to B_n(T) \to B_n(T)/Z(B_n(T)) \to 1.
\]
Since $Z(B_n(T)) \cong \mathbb{Z}^2$ \cite[Proposition 4.2]{Paris:Rolfsen}, an application of \cite[Proposition 4.15]{Rita:Porfirio:Luis}
yields that $B_n(T)$ itself satisfies the desired property.

\end{proof}

\begin{lemma}\label{exten:gd:inequality}\cite[Theorem 5.16]{Lu05}
Let 
\[
1 \to K \to G \to H \to 1
\]
be a short exact sequence of groups. Suppose that for any group \( M \) which contains a finite-index subgroup isomorphic to \( K \), we have \( \gdfin(M) = \gdfin(K) \). Then the following inequality holds
\[
\gdfin(G) \leq \gdfin(K) + \gdfin(H).
\]
\end{lemma}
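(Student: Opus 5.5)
This is \cref{exten:gd:inequality}, quoted from \cite[Theorem 5.16]{Lu05}. I would prove it via the standard \emph{fibred} construction of classifying spaces for proper actions. Set $X=E_{\calF_0}H$ with $\dim X=\gdfin(H)$, and let $p\colon G\to H$ be the projection, so that $G$ acts on $X$ through $p$. The isotropy groups of this action are the preimages $p^{-1}(F)$ with $F\le H$ finite. Each such preimage contains $K$ with finite index. By hypothesis, each $M_F:=p^{-1}(F)$ therefore admits a model $E_{\calF_0}M_F$ of dimension $\gdfin(K)$.

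The key step is the general principle that handles this situation. If $G$ acts cellularly on a $G$-CW-complex $X$, and for every isotropy group $G_\sigma$ the family $\calF_0\cap G_\sigma$ has $\gd_{\calF_0\cap G_\sigma}(G_\sigma)\le d$, then
\[
\gdfin(G)\le \dim X + d .
\]
This is the same mechanism as \cref{nested:families:lw}, applied with $\calG$ the family generated by the isotropy groups $p^{-1}(F)$. Concretely, I would argue as follows.

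1. Let $\calG=\{L\le G \mid p(L)\ \text{finite}\}$. This is a family containing $\calF_0$, and $X$ with the $G$-action through $p$ is a model for $E_{\calG}G$. The isotropy groups lie in $\calG$. For the universal property: if $L\in\calG$ then $X^L=X^{p(L)}$, which is contractible since $p(L)$ is finite; if $L\notin\calG$ then $p(L)$ is infinite and $X^L$ is empty. Hence $\gd_{\calG}(G)\le \gdfin(H)$.

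2. Every $L\in\calG$ contains $L\cap K$ with finite index. I need $\gd_{\calF_0\cap L}(L)\le \gdfin(K)$. The clean way is to embed $L$ as a finite-index subgroup of the group $M=K\cdot L=p^{-1}(p(L))$. The hypothesis gives $\gdfin(M)=\gdfin(K)$. Restricting a model of $E_{\calF_0}M$ to $L$ gives a model for $E_{\calF_0}L$, so $\gdfin(L)\le \gdfin(K)$.

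3. Now apply \cref{nested:families:lw} with $\calF=\calF_0\subseteq\calG$ and $d=\gdfin(K)$. This yields $\gdfin(G)\le\gd_{\calG}(G)+\gdfin(K)\le\gdfin(H)+\gdfin(K)$.

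The main subtlety is step 2. The hypothesis is phrased only for groups containing a finite-index copy of $K$, and one must check that $p^{-1}(p(L))$ is such a group, which it is since $p(L)$ is finite. Arbitrary subgroups of it are then handled by restriction. If one prefers not to rely on \cref{nested:families:lw} as a black box, its proof is the cell-by-cell construction. For each equivariant cell $G/G_\sigma\times D^k$ of $X$, replace it by $G\times_{G_\sigma}E_{\calF_0}G_\sigma\times D^k$ and glue inductively along skeleta using the universal property to produce attaching maps. This gives a $G$-CW-complex of dimension at most $\dim X+d$ whose fixed sets for finite subgroups are contractible. That contractibility check, via a Mayer--Vietoris/colimit argument over the skeleta, is the only technical part.
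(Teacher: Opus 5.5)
Your argument is correct. It is the standard proof of the cited result \cite[Theorem 5.16]{Lu05}; the paper itself gives no proof and only quotes it. You pull back $E_{\calF_0}H$ along $p$ to get a model for the family of subgroups with finite image, then apply \cref{nested:families:lw}, which is exactly the mechanism the paper uses in \cref{upper:bound:gdf_n:ses}. One small slip: $L$ need not have finite index in $M=KL$ (take $L$ trivial), but you never use this, since restricting $E_{\calF_0}M$ to any subgroup gives a model for $E_{\calF_0}L$, and the hypothesis only requires $[M:K]=|p(L)|<\infty$.
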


\begin{lemma}\label{upper:bound:weil}
Let \( \Sigma \) be a connected surface of finite type with Euler characteristic \( \chi(\Sigma) \leq 0 \). For every infinite virtually abelian subgroup \( L \) of \( \pb_{m}(\Sigma) \), we have 
\[
\gdfin(W_G(L)) \leq \cd(\pb_{m}(\Sigma)) - \rank(L).
\]
\end{lemma}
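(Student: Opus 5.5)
We prove the bound by induction on $m$, using the Fadell--Neuwirth sequence (\cref{Fadell:Neuwirth}) together with \cref{exten:gd:inequality}. Throughout, $G=\pb_m(\Sigma)$, and by \cref{surfacebraid:puntures:boundaries} we may assume $\Sigma$ has no boundary.

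\textbf{Reduction to a centralizer.} By \cref{commen:equal:norma} we may replace $L$ by a commensurable subgroup $H$ with $N_G[L]=N_G(H)$. Passing to a finite-index subgroup we may take $H\cong\mathbb{Z}^k$ with $k=\rank(L)$. Proper geometric dimension is insensitive to finite-index changes here in the relevant direction, since the groups involved are torsion-free up to finite extensions. So it suffices to bound $\gdfin$ of $N_G(H)/H$.

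Pure surface braid groups are torsion-free, and $N_G(H)/C_G(H)$ embeds in $\mathrm{GL}_k(\mathbb{Z})$. Using the structure of abelian subgroups via the Nielsen--Thurston picture, $C_G(H)$ has finite index in $N_G(H)$, at least after passing to a suitable finite-index $H$. Thus $W_G(H)$ is virtually $C_G(H)/H$. If we show that $C_G(H)/H$ is torsion-free with $\cd(C_G(H)/H)\le \cd(G)-k$, then the standard bound $\gdfin\le$ the $\vcd$-type bound for virtually torsion-free groups of this type gives the claim. This holds because such Weyl groups are again built from surface braid and mapping class groups, for which $\gdfin=\vcd$.

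\textbf{Inductive step.} Consider
\[
1\to F=\pi_1(\Sigma-Q_{m-1})\to G\xrightarrow{\varphi}\pb_{m-1}(\Sigma)\to 1 .
\]
Intersect $N_G(H)$ with this sequence. Let $H_0=H\cap F$ and $\bar H=\varphi(H)$. Then $H_0$ is cyclic or trivial, since abelian subgroups of a free or surface group have rank at most $1$ (rank $2$ only for the torus with $m=1$, handled directly). Also $\rank \bar H = k-\rank H_0$. We obtain an extension
\[
1\to N_F(H)/H_0\to N_G(H)/H\to \Gamma\to 1,
\]
with $\Gamma\le N_{\pb_{m-1}}(\bar H)/\bar H$ up to finite index. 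The kernel is a subgroup of $F/H_0$ normalized by $H_0$:
\begin{itemize}
\item if $H_0\neq 1$, it is $C_F(H_0)/H_0$, which is finite or trivial, so $\gdfin=0$;
\item if $H_0=1$, it is a subgroup of a free group (or surface group), so $\gdfin\le 1$.
\end{itemize}
In either case the kernel is free or finite, and virtually-free groups satisfy the hypothesis of \cref{exten:gd:inequality}, since a group containing a finite-index free subgroup is virtually free and has $\gdfin\le 1$. Applying \cref{exten:gd:inequality} together with induction gives
\[
\gdfin(W_G(H))\le (1-\rank H_0)+\bigl(\cd\pb_{m-1}(\Sigma)-\rank\bar H\bigr)=\cd(G)-k .
\]
Here we use $\cd\pb_m=\cd\pb_{m-1}+1$ from \cref{vcd:surfaces:braid:groups:0}. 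In the punctured case $\cd F=1$; in the closed case $\cd F=1$ as well for $m\ge2$, while the base case $m=1$ is a surface group, whose centralizers are cyclic, so the quotient is trivial or $\mathbb{Z}^2/\mathbb{Z}^2$.

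\textbf{Main obstacle.} The hard part is making the extension exact at the level of Weyl groups. Specifically, we must show that $\varphi(N_G(H))$ lands, up to finite index, in $N(\bar H)$ and that the resulting quotient has the right rank drop. When $\bar H$ is not a direct image of a rank-compatible subgroup, the commensurator may grow. We would first try to choose $H$, via \cref{commen:equal:norma}, so that $H=H_0\times H_1$ with $H_1$ mapping isomorphically onto $\bar H$. We would then use the centralizer description of $\bar H$ in $\pb_{m-1}$ in the inductive hypothesis, applied to commensurators rather than normalizers, so that finite-index discrepancies are absorbed by the invariance of $\gdfin$ under finite extensions.
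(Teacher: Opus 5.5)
Your inductive step is essentially the paper's proof. You restrict the Fadell--Neuwirth sequence to the normalizer, pass to the quotient by $L$, and apply \cref{exten:gd:inequality}; the kernel is finite when $L\cap F\neq 1$ and free when $L\cap F=1$, and you finish with induction on $\varphi(L)$. Splitting by whether $L\cap F$ is trivial is actually cleaner than the paper's split by $\rank(L)$. It also covers $\rank(L)\ge 2$ with $L\cap F=1$, a case the paper's Subcase 2.2 tacitly excludes even though it genuinely occurs.

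As written, though, the proposal does not close, for reasons of your own making. Your ``main obstacle'' is not an obstacle. For any $L$ we have $\varphi(N_G(L))\le N_{\pb_{m-1}(\Sigma)}(\varphi(L))$ exactly, not up to finite index, and the sequence
\[
1\to (F\cap N_G(L))/(F\cap L)\to W_G(L)\to \varphi(N_G(L))/\varphi(L)\to 1
\]
is exact. Its quotient is a subgroup of $W(\varphi(L))$, and $\rank\varphi(L)=\rank L-\rank(L\cap F)$ by additivity of Hirsch length. The lemma is stated for every infinite virtually abelian subgroup and concerns normalizers, not commensurators, so the inductive hypothesis applies directly to $\varphi(L)$. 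You need neither a decomposition $H=H_0\times H_1$ nor commensurators.

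The obstacle only arises from your preliminary reduction, which you should delete: it is unnecessary and its justifications are wrong. $\gdfin$ is not insensitive to finite-index changes; it can strictly increase on passing to a finite-index overgroup, which is exactly why \cref{exten:gd:inequality} carries its hypothesis on finite-index overgroups. The claim that such Weyl groups are built from groups with $\gdfin=\vcd$ is essentially what is being proved. (The swap of $L$ for $H$ can be justified: $N_G(L)$ normalizes $L\cap H$, and $\gdfin$ is unchanged by quotienting finite normal subgroups. But you do not need it.)

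Two smaller gaps remain. When $\varphi(L)=1$ the inductive hypothesis does not apply; use instead $\gdfin(\varphi(N_G(L)))\le \gd(\pb_{m-1}(\Sigma))=\cd(\pb_{m-1}(\Sigma))$ from \cref{caso:base:dimension}. In the base case for the torus and Klein bottle, a rank-one $L$ gives $W_G(L)$ virtually $\mathbb{Z}$, not trivial; this still satisfies $1\le 2-1$.
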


\begin{proof}
We prove the result by induction on \( m \), the number of strands. When \( m = 1 \), we have \( \pb_1(\Sigma) = \pi_1(\Sigma) \), the fundamental group of the surface \( \Sigma \). If \( \chi(\Sigma) < 0 \), it follows that \( \pi_1(\Sigma) \) is torsion-free and hyperbolic. Therefore, any infinite virtually abelian subgroup \( L \) of \( \pi_1(\Sigma) \) must be isomorphic to \( \mathbb{Z} \). In a hyperbolic group, we know that \( C_G(L) \cong \mathbb{Z} \). Moreover, since \( C_G(L) \) has finite index in \( N_G(L) \), it follows that \( N_G(L) \cong \mathbb{Z} \) as well. Therefore, we have
\[
\gdfin(W_G(L)) \leq \cd(\pb_1(\Sigma)) - \rank(L),
\]

If $\chi(\Sigma) = 0$, then $\Sigma$ is the torus $T$, the Klein bottle $K$, 
the cylinder $\Sigma_0^2$, or the Möbius band $MB$. Recall that the fundamental groups of these surfaces are respectively $\mathbb{Z}^2$, $\mathbb{Z} \rtimes \mathbb{Z}$, $\mathbb{Z}$ and $\mathbb{Z}$. 
The claim follows by a direct verification in each case. For instance, we consider the Klein bottle (the rest of the cases are similar). Here 
$\pi_1(\Sigma) \cong \mathbb{Z} \rtimes \mathbb{Z}$, every subgroup is either 
virtually $\mathbb{Z}$ or virtually $\mathbb{Z}^2$.

\begin{itemize}
    \item If $H$ is virtually $\mathbb{Z}$, 
then $N(H)$ is either infinite cyclic or virtually $\mathbb{Z}^2$. In the former 
case $W(H) = N(H)/H$ is finite, so $\gdfin(W(H)) = 0$. In the latter case 
$W(H)$ is virtually $\mathbb{Z}$, and
\[
\gdfin(W(H)) = 1 \leq \cd(N(H)) - 1 = 2 - 1 = 1.
\]

\item If $H$ is virtually $\Z^2$ we have that $N(H)$ is virtually $\Z^2$, then $W(H)$ is virtually $\Z$, and
\[
\gdfin(W(H)) = 1 \leq \cd(N(H)) - 1 = 2 - 1 = 1.
\]
\end{itemize}
This completes the base case. 

Assume the lemma holds for \( m \). We now prove it for \( m + 1 \). By \cref{Fadell:Neuwirth} we have the following short exact sequence
\[
1 \to \pi_1(\Sigma - Q_m) \to \pb_{m+1}(\Sigma) \to \pb_m(\Sigma) \to 1,
\]
where \( Q_m \) denotes the set of \( m \) distinct points on \( \Sigma \).

Restricting this sequence to \( N_G(L) \), we obtain the following exact sequence:
\[
1 \to F \cap N_G(L) \to N_G(L) \to Q\subseteq N_{\pb_m(\Sigma)}(\varphi(L)) ,
\]
where \( F = \pi_1(\Sigma - Q_m) \), and \( \varphi \) is the projection map \( \pb_{m+1}(\Sigma) \to \pb_m(\Sigma) \).

We now analyze two cases based on whether \( \varphi(L) = 0 \) or \( \varphi(L) \neq 0 \):

\noindent \textbf{Case 1:} If \( \varphi(L) = 0 \), then \( L \cong \mathbb{Z} \), and we have the exact sequence:
\[
1 \to N_F(L) \to N_G(L) \to \pb_m(\Sigma) \to 1.
\]
Since \( W_F(L) \) is finite, and any finite extension of a finite group has \( \gdfin \) equal to zero, it follows from \cref{exten:gd:inequality} that
\[
\gdfin(W_G(L)) \leq \cd(\pb_m(\Sigma)) \leq \cd(\pb_{m+1}(\Sigma)) - \rank(L).
\]

\noindent \textbf{Case 2:} If \( \varphi(L) \neq 0 \), we further divide into two subcases depending on the rank of \( L \):

\noindent \textbf{Subcase 2.1:} If \( \rank(L) = 1 \), then \( L \cong \mathbb{Z} \), and we have the exact sequence:
\[
1 \to F \cap N_G(L) \to N_G(L) \to Q\subseteq N_{\pb_m(\Sigma)}(\varphi(L)) .
\]

Since \( F \cap N_G(L)\) is free, and any finite extension of a free group has \( \gdfin \) equal 1, it follows from \cref{exten:gd:inequality} that
\[
\begin{aligned}
\gdfin(W_G(L)) &\leq 1 + \gdfin(Q/\varphi(L)) \\
&\leq 1 + \gdfin(N_{\pb_m(\Sigma)}(\varphi(L))/\varphi(L)) \\
&\leq 1 + \cd(\pb_m(\Sigma)) - \rank(L) \\
&= \cd(\pb_{m+1}(\Sigma)) - \rank(L).
\end{aligned}
\]

the third inequality holds by the induction hypothesis and the last inequality by \cref{vcd:surfaces:braid:groups:0}.

\noindent \textbf{Subcase 2.2:} If \( \rank(L) \geq 2 \), then \( N_F(L\cap F ) \cong \mathbb{Z} \), it follows that \( W_F(L\cap F) \) is finite, and any finite extension of finite group has \( \gdfin = 0 \). Thus, by \cref{exten:gd:inequality}, we have

\[
\begin{aligned}
\gdfin(W_G(L)) &\leq \gdfin(N_{\pb_m(\Sigma)}(\varphi(L))/\varphi(L)) \\
&\leq \cd(\pb_m(\Sigma)) - \rank(\varphi(L)) \\
&= \cd(\pb_m(\Sigma)) - \rank(L)+1 \\
&= \cd(\pb_{m+1}(\Sigma)) - \rank(L).
\end{aligned}
\]
where, again, the second inequality follows from the induction hypothesis and the last inequality by \cref{vcd:surfaces:braid:groups:0}.

This completes the induction step, and thus the proof of the lemma.
\end{proof}

\begin{lemma}\label{caso:base:dimension}
Let $\Sigma_{g,k}^r$ be a surface with $\chi(\Sigma_{g,k}^r) \leq 0$. Then
\[
  \gd(\pb_n(\Sigma_{g,k}^r)) = \cd(\pb_n(\Sigma_{g,k}^r)).
\]
\end{lemma}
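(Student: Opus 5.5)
The plan is to construct an explicit finite-dimensional aspherical model for $\pb_n(\Sigma)$ by iterating the Fadell--Neuwirth fibration. Then we compare its dimension with $\cd$ and invoke the Eilenberg--Ganea theorem. Here $\gd$ denotes the usual geometric dimension, i.e.\ the minimal dimension of a model for $E\pb_n(\Sigma)=E_{\{1\}}\pb_n(\Sigma)$. Since $\pb_n(\Sigma)$ is torsion-free (it is the fundamental group of the aspherical space $F_n(\Sigma)$, as noted after the Fadell--Neuwirth fibration), we always have $\cd\le\gd$. Moreover, Eilenberg--Ganea gives $\gd=\cd$ whenever $\cd\ge 3$. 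It therefore suffices to treat the cases $\cd\le 2$ directly and otherwise just check that $\cd$ is finite.

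\textbf{Step 1 (reduction).} By \cref{surfacebraid:puntures:boundaries} we may replace $\Sigma_{g,k}^r$ by $\Sigma_{g,k+r}$, a surface without boundary, so the Fadell--Neuwirth fibration applies. Since $\chi\le 0$, the surface is not $S^2$ or $\mathbb{RP}^2$. By \cref{vcd:surfaces:braid:groups:0}, $\cd(\pb_n(\Sigma))$ equals $n$ if $\Sigma$ has a puncture, and $n+1$ if $\Sigma$ is closed; note $\cd=\vcd$ because the group is torsion-free and of finite index in $B_n$.

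\textbf{Step 2 (upper bound on $\gd$).} We induct on $n$ using the fibration $F_{n+1}(\Sigma)\to F_n(\Sigma)$ with fibre $\Sigma-Q_n$. The homotopy dimension of the fibre is $1$ when $\Sigma-Q_n$ is open, which always holds for $n\ge1$. The base case $F_1(\Sigma)=\Sigma$ has dimension $1$ if $\Sigma$ is punctured and $2$ if $\Sigma$ is closed. Rather than dealing with point-set issues for total spaces of fibrations, I would argue group-theoretically. The group $\pi_1(\Sigma-Q_n)$ is free, so its classifying space can be taken $1$-dimensional. For torsion-free groups one has $\gd(G)\le\gd(K)+\gd(H)$ for an extension $1\to K\to G\to H\to1$: take the Borel-type fibration $EG\times_G(EH)\simeq$ a bundle over $BH$ with fibre $BK$, and build a CW model of dimension $\gd K+\gd H$ by the standard twisted-product construction. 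Alternatively, apply \cref{exten:gd:inequality} with $\gdfin$, which for torsion-free groups equals $\gd$. Its hypothesis requires that a group containing a free group of finite index has $\gdfin$ equal to that of the free group, and this holds by Stallings--Swan/Karrass--Pietrowski--Solitar; for the base $\pi_1(\Sigma)$ of a closed surface it also holds in the needed form. Iterating the inequality gives $\gd(\pb_n(\Sigma))\le \gd(\pi_1\Sigma)+(n-1)$. This equals $n$ in the punctured case and $n+1$ in the closed case, matching $\cd$ from Step 1. Combined with $\cd\le\gd$, this gives equality.

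\textbf{Main obstacle.} The main obstacle is justifying the subadditivity in Step 2 in low dimensions, where Eilenberg--Ganea does not help. The bound for $\cd$ (via the Lyndon--Hochschild--Serre spectral sequence) is routine, but for $\gd$ one needs an actual construction. I would first try \cref{exten:gd:inequality} with kernel the free group $\pi_1(\Sigma-Q_n)$. Here $\gdfin$ of any group virtually free of finite rank is $\le1$: it acts properly on a tree, and is torsion-free if the ambient group is. This makes the hypothesis of \cref{exten:gd:inequality} verifiable. The remaining small cases are $n=1$, and $n=2$ with a punctured base. For $n=1$ the statement is the classical fact that surface groups have $\gd=\cd$, since the surface (or a spine of it) is a model. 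For $n=2$ with a punctured base, the bound $\gd\le 2$ follows from the inequality above.
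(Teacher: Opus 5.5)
Your proposal is correct and follows essentially the paper's argument. It uses $\cd\le\gd\le\max\{3,\cd\}$ (Eilenberg--Ganea) for the large cases, the surface itself or a graph spine for $n=1$, and induction along the Fadell--Neuwirth sequence with the free kernel $\pi_1(\Sigma-Q_n)$ plugged into \cref{exten:gd:inequality}. The only difference is cosmetic: you also run the induction in the closed case starting from $\gd(\pi_1\Sigma)=2$, whereas the paper settles closed surfaces with $n\ge 2$ directly by Eilenberg--Ganea, since there $\cd=n+1\ge 3$.
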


\begin{proof}
We consider two cases according to whether $r + k = 0$ or not.

\medskip
\noindent\textbf{Case 1: $r + k = 0$.}
In this case $\Sigma_{g,k}^r$ is a closed surface, and by \cref{vcd:surfaces:braid:groups:0},
\[
  \cd(\pb_n(\Sigma_{g,k}^r)) = n + 1.
\]

\noindent\textit{Subcase $n = 1$.}
We have $\pb_1(\Sigma_{g,k}^r) = \pi_1(\Sigma_{g,k}^r)$, and $\pi_1(\Sigma_{g,k}^r)$
acts freely and properly on $\mathbb{H}^2$ or $\mathbb{R}^2$, which are
models for $E\pi_1(\Sigma_{g,k}^r)$ of dimension~$2$. Hence
\[
  2 = \cd(\pb_1(\Sigma_{g,k}^r)) \leq \gd(\pb_1(\Sigma_{g,k}^r)) \leq 2,
\]
and therefore $\gd(\pb_1(\Sigma_{g,k}^r)) = \cd(\pb_1(\Sigma_{g,k}^r))$.

\noindent\textit{Subcase $n \geq 2$.}
We have $\cd(\pb_n(\Sigma_{g,k}^r)) \geq 3$, and the result follows from
\[
  \cd(\pb_n(\Sigma_{g,k}^r)) \leq \gd(\pb_n(\Sigma_{g,k}^r))
  \leq \max\{3,\, \cd(\pb_n(\Sigma_{g,k}^r))\} = \cd(\pb_n(\Sigma_{g,k}^r)).
\]

\medskip
\noindent\textbf{Case 2: $r + k \neq 0$.}
By \cref{vcd:surfaces:braid:groups:0},
\[
  \cd(\pb_n(\Sigma_{g,k}^r)) = n.
\]
Since $\cd(\pb_n(\Sigma_{g,k}^r)) \leq \gd(\pb_n(\Sigma_{g,k}^r))$ always holds,
it suffices to show that $\gd(\pb_n(\Sigma_{g,k}^r)) \leq n$.
We proceed by induction on $n$. If $n = 1$ then $\pb_1(\Sigma_{g,k}^r) = \pi_1(\Sigma_{g,k}^r)$, which is a free group,
and it is well known that $\gd(\pi_1(\Sigma_{g,k}^r)) = 1$.

Assume the result holds for $n$; we prove it for $n+1$.
The Fadell--Neuwirth fibration yields a short exact sequence
\[
  1 \to \pi_1(\Sigma_{g,k+r+n}) \to \pb_{n+1}(\Sigma_{g,k+r})
    \to \pb_n(\Sigma_{g,k+r}) \to 1.
\]
By \cref{exten:gd:inequality} and the inductive hypothesis,
\[
  \gd(\pb_{n+1}(\Sigma_{g,k+r}))
  \leq \gd(\pi_1(\Sigma_{g,k+r+n})) + \gd(\pb_n(\Sigma_{g,k+r}))
  \leq 1 + n = n + 1.
\]
This completes the proof.
\end{proof}
 We are now in a position to prove the main theorem.

\begin{theorem}[Upper bound]\label{upper:bound:gd:fn}
Let $\Sigma$ be a connected surface of finite type, possibly non-orientable
and possibly with punctures, with $\chi(\Sigma) \leq 0$.
For every integer $r \geq 0$,
\[
  \gd_{\calF_r}(\pb_n(\Sigma)) \leq \cd(\pb_n(\Sigma)) + r.
\]
\end{theorem}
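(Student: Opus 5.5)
Write $G=\pb_n(\Sigma)$ and $d=\cd(G)$. I would argue by induction on $r$, using the Lück--Weiermann construction (\cref{Luck:weiermann}) in the form of \cref{lw:gd:upper:bound}. For the base case $r=0$: $G$ is torsion-free, so $\calF_0$ is the trivial family and $\gd_{\calF_0}(G)=\gd(G)=d$ by \cref{caso:base:dimension}. For the inductive step, assume $\gd_{\calF_r}(G)\le d+r$. By \cref{lw:gd:upper:bound} it then suffices to show that for each class $[L]$ with $L\in\calF_{r+1}-\calF_r$ we have
\[
\gd_{\calF_{r+1}[L]}(N_G[L])\le d+r+1,
\]
since the other term is already $\gd_{\calF_r}(G)+1\le d+r+1$.

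To bound $\gd_{\calF_{r+1}[L]}(N_G[L])$, I would first apply \cref{commen:equal:norma} to replace $L$ by a commensurable $H$ with $N_G[L]=N_G(H)$. Then $H$ is virtually $\Z^{r+1}$, and since $G$ is torsion-free, $H\cong\Z^{r+1}$ (virtually free abelian and torsion-free; if necessary I would pass to a finite-index free abelian subgroup, which does not change the commensurator). Let $\pi\colon N_G(H)\to W_G(H)=N_G(H)/H$ be the quotient map, and consider the family $\calG=\pi^{-1}(\calF_0(W_G(H)))$ of subgroups of $N_G(H)$ whose image in $W_G(H)$ is finite.

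A subgroup $K\le N_G(H)$ lies in $\calG$ exactly when $K\cap H$ has finite index in $K$; such a $K$ is virtually abelian of rank at most $r+1$. It is either in $\calF_r$, or it has rank $r+1$, in which case it is commensurable with $H$. So $\calG\subseteq \calF_{r+1}[L]$. Since $\calG$ is pulled back from the finite family of $W_G(H)$, a model for $\underline{E}W_G(H)$ is a model for $E_\calG N_G(H)$. Hence, by \cref{upper:bound:weil},
\[
\gd_\calG(N_G(H))\le\gdfin(W_G(H))\le d-(r+1).
\]
Now apply \cref{nested:families:lw} to the pair $\calF_{r+1}[L]\subseteq\calF_{r+1}[L]\cup\calG$. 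Rather than that, it is cleaner to use \cref{upper:bound:gd:two:families} with $\calF_{r+1}[L]=\calG\cup(\calF_r\cap N_G(H))$. This equality needs checking: every $K\in\calF_{r+1}[L]$ of rank $r+1$ is commensurable with $H$, so $K\cap H$ has finite index in $K$, which puts $K$ in $\calG$. With this decomposition, \cref{upper:bound:gd:two:families} gives
\[
\gd_{\calF_{r+1}[L]}(N_G(H))\le\max\{\gd_\calG,\ \gd_{\calF_r}(N_G(H)),\ \gd_{\calF_r\cap\calG}(N_G(H))+1\}.
\]
The middle term is at most $d+r$ by induction, because $\gd_{\calF_r}$ is monotone under passing to subgroups (restrict a model). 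For the last term, apply \cref{nested:families:lw} to $\calF_r\cap\calG\subseteq\calG$. Each $K\in\calG$ is virtually $\Z^{s}$ with $s\le r+1$, and its $\calF_r$-dimension is at most $s+r\le 2r+1$ by the known bound $\gd_{\calF_k}(\Z^s)\le s+k$, extended to virtually abelian groups. This gives
\[
\gd_{\calF_r\cap\calG}\le (d-r-1)+(2r+1)=d+r,
\]
and therefore the last term is at most $d+r+1$, as required.

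The main obstacle is this last step. It needs $\gd_{\calF_r}(K)\le \rank K+r$ for virtually free abelian $K$, with no dimensional loss, and the bound is only quoted for free abelian groups. Since all relevant $K$ here lie in a torsion-free group, they are torsion-free and virtually $\Z^s$; such groups are crystallographic-type (Bieberbach), and I would try to adapt the Nucinkis et al.\ construction equivariantly to them. Alternatively, I would intersect $K$ with $H$ and use the fact that $K\cap H\cong\Z^{s'}$ has finite index in $K$. A secondary point to verify is that \cref{upper:bound:weil} applies to the free abelian representative $H$ rather than to $L$; this holds because ranks agree and the lemma is stated for arbitrary infinite virtually abelian subgroups.
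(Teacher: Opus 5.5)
Your argument is the paper's proof. It uses the same induction via \cref{lw:gd:upper:bound}, the same replacement $N_G[L]=N_G(H)$ from \cref{commen:equal:norma}, and the same splitting $\calF_{r+1}[L]=\calG\cup(\calF_r\cap N_G(H))$, with $\gd_{\calG}(N_G(H))\le\gdfin(W_G(H))\le d-r-1$ by \cref{upper:bound:weil}. It then combines \cref{upper:bound:gd:two:families} and \cref{nested:families:lw} in the same way. Your description of $\calG$ as the pull-back of the finite subgroups of $W_G(H)$ is exactly what justifies the paper's remark that a model for $E_{\calF_0}W_G(H)$ is a model for $E_{\calG}N_G(H)$.

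The step you single out as the main obstacle is not a real gap. The paper closes it by citing \cite[Proposition 1.3]{tomasz}, which gives $\gd_{\calF_r}(K)\le (r+1)+r$ for every virtually $\Z^{r+1}$ group $K$. The same fact appears later in the paper as \cref{Virtually:dimension:Z} with $n=r+1$, $k=r$. So no Bieberbach-type adaptation of the construction for $\Z^s$ is needed. Your fallback of passing to $K\cap H$ would also require a finite-index transfer result for $\gd_{\calF_r}$, which you do not have.

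One small correction: a torsion-free virtually $\Z^{r+1}$ group need not be free abelian (the Klein bottle group is an example). You never actually need $H$ to be free abelian, since commensurability of rank-$(r+1)$ members of $\calG$ with $H$ and \cref{upper:bound:weil} both work for virtually abelian $H$. If you do replace $H$ by a finite-index free abelian subgroup, choose one that is characteristic in $H$, because what must be preserved is the normalizer $N_G(H)$, not just the commensurator.
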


\begin{proof}
The proof is by induction on \( r \). Let \( G = \pb_n(\Sigma) \). For \( r = 0 \), it follows from \cref{caso:base:dimension}
\[
\gd_{\calF_0}(G) = \gd(G) = \cd(G).
\]
Now, assume that the inequality holds for all \( r < k \). We now prove it for \( r = k \). Consider the equivalence relation \( \sim \) on \( \calF_k - \calF_{k-1} \) defined by commensurability, and let \( I \) be a complete set of representatives of the equivalence classes in \( (\calF_k - \calF_{k-1}) / \sim \). By \cref{lw:gd:upper:bound}, we have
\[
\gd_{\calF_k}(G) \leq \max \left\{ \gd_{\calF_{k-1}}(G) + 1, \gd_{\calF_k[L]}(N_G[L]) \mid L \in I \right\}.
\]
Using the inductive hypothesis, we can bound \( \gd_{\calF_{k-1}}(G) \leq \cd(G) + (k-1) \), and thus
\[
\gd_{\calF_k}(G) \leq \max \left\{ \cd(G) + k, \gd_{\calF_k[L]}(N_G[L]) \mid L \in I \right\}.
\]
To complete the proof, it suffices to show that \( \gd_{\calF_k[L]}(N_G[L]) \leq \cd(G) + k \) for all \( L \in I \).

Let \( L \in I \). By \cref{commen:equal:norma}, there exists a subgroup \( H \leq G \) commensurable with $L$ such that \( N_G[L] = N_G(H) \). Therefore, we may replace \( N_G[L] \) with \( N_G(H) \) and work with \( \calF_k[H] \).

We write the family 
\[
\calF_k[H] = \{ K \leq N_G(H) \mid K \in \calF_k - \calF_{k-1}, K \sim H \} \cup (\calF_{k-1}\cap N_G(H))
\]
as the union of two families, \( \calF_k[H] = \calG \cup (\calF_{k-1} \cap N_G(H))\), where \( \calG \) is generated by $$\{ K \leq N_G(H) \mid K \in \calF_k - \calF_{k-1}, [K] = [H] \}. $$ By \cref{upper:bound:gd:two:families}, we obtain
\[
\begin{split}
\gd_{\calF_k[L]}(N_G(H)) & \leq \max \left\{ \gd_{\calF_{k-1}}(N_G(H)), \gd_{\calG \cap (\calF_{k-1}\cap N_G(H))}(N_G(H)) + 1, \gd_{\calG}(N_G(H)) \right\} \\
& \leq \max \left\{ \cd(G) + k - 1, \gd_{\calG \cap (\calF_{k-1}\cap N_G(H))}(N_G(H)) + 1, \gd_{\calG}(N_G(H)) \right\},
\end{split}
\]
where the second inequality follows from the inductive hypothesis.

We now proceed to prove the following inequalities:
\begin{enumerate}[i)]
    \item \( \gd_{\calG}(N_G(H)) \leq \cd(G) - k \),
    \item \( \gd_{\calG \cap (\calF_{k-1}\cap N_G(H))}(N_G(H)) \leq \cd(G) + k - 1 \).
\end{enumerate}
Once these inequalities are established, we conclude that \( \gd_{\calF_k[L]}(N_G(H)) \leq \cd(G) + k \).

To prove item \( i) \), note that a model for \( E_{\calF_0}(W_G(H)) \) is a model for \( E_{\calG}(N_G(H)) \) via the projection \( N_G(H) \to W_G(H) \). Therefore, the claim follows directly from \cref{upper:bound:weil}.

For item \( ii) \), we apply \cref{nested:families:lw} to the inclusion \( \calG \cap (\calF_{k-1}\cap N_G(H)) \subset \calG \), we obtain
\[
\gd_{\calG \cap (\calF_{k-1}\cap N_G(H))}(N_G(H)) \leq \gd_{\calG}(N_G(H)) + d
\]
for some \( d \) such that for any \( K \in \calG \), we have \( \gd_{\calG \cap (\calF_{k-1}\cap N_G(H))}(K) \leq d \).

Since we have already established that \( \gd_{\calG}(N_G(H)) \leq \cd(G) - k \), our next task is to show that \( d \) can be chosen to be \( 2k - 1 \).

Recall that any \( K \in \calG \) is virtually \( \mathbb{Z}^t \) for some \( 0 \leq t \leq k \). We now consider two cases: If \( K \in \calG \) is virtually \( \mathbb{Z}^t \) for some \( 0 \leq t \leq k-1 \), then \( K \in \calF_{k-1} \), so \( K \in \calG \cap (\calF_{k-1}\cap N_G(H)) \) and \( \gd_{\calG \cap \calF_{k-1}}(K) = 0 \).
 If \( K \in \calG \) is virtually \( \mathbb{Z}^k \), we claim that \( (\calG \cap (\calF_{k-1}\cap N_G(H))) \cap K = \calF_{k-1} \cap K \). The inclusion \( (\calG \cap \calF_{k-1}) \cap K \subseteq \calF_{k-1} \cap K \) is clear. For the reverse inclusion, let \( M \in \calF_{k-1} \cap K \). Since \( K \leq N_G(H)\), we have \( \calF_{k-1} \cap K \subseteq \calF_{k-1} \cap N_G(H)\), and thus \( M \in \calF_{k-1} \cap N_G(H) \). Moreover, \( M \leq K \in \calG \), so \( M \in (\calG \cap (\calF_{k-1}\cap N_G(H))) \cap K \). Therefore, we conclude
\[
\gd_{(\calG \cap \calF_{k-1}) \cap K}(K) = \gd_{\calF_{k-1} \cap K}(K) \leq k + k - 1 = 2k - 1,
\]
where the last inequality follows from \cite[Proposition 1.3]{tomasz}.
\end{proof}
We now prove the lower bound. This requires a few preliminary lemmas.

\begin{lemma}\label{free:abelian:subgroup:cd}
$\pb_n(\Sigma_0^{2})$ has a free abelian subgroup of rank $n$. 
\end{lemma}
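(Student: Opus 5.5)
We construct $n$ explicit pairwise commuting pure braids on the annulus $\Sigma_0^{2}$ and show they generate a copy of $\Z^n$. We model $\Sigma_0^{2}$ as $\{z\in\mathbb{C}\mid 1\le |z|\le 2\}$, place the base points $x_j=1+j/(n+1)$ on the positive real axis at distinct radii, and let $\beta_j$ be the braid in which the $j$-th point travels once around the circle $|z|=|x_j|$ while the others stay fixed. Since the circles are concentric and disjoint, the $\beta_j$ can be realized simultaneously by the loop $t\mapsto(e^{2\pi i t_1}x_1,\dots,e^{2\pi i t_n}x_n)$ in $F_n(\Sigma_0^{2})$. This gives a continuous map $T^n\to F_n(\Sigma_0^{2})$ from the $n$-torus, so the $\beta_j$ commute in $\pb_n(\Sigma_0^{2})$. (Alternatively, $\beta_j$ is the full rotation of the inner $j$ points around the inner boundary composed with suitable inverses; the torus picture is cleaner.)

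Let $\Phi:\Z^n\to\pb_n(\Sigma_0^{2})$ be the induced homomorphism; we must show it is injective. The most direct route is to find a left inverse after abelianization or via a further map. For each $j$, there is a map $F_n(\Sigma_0^{2})\to \Sigma_0^{2}$ given by $(x_1,\dots,x_n)\mapsto x_j$. On fundamental groups this sends $\beta_j$ to the generator of $\pi_1(\Sigma_0^{2})\cong\Z$ and $\beta_i$ ($i\ne j$) to $1$. The product of these maps $F_n(\Sigma_0^{2})\to(\Sigma_0^{2})^n$ therefore induces $\pb_n(\Sigma_0^{2})\to\Z^n$, and its composite with $\Phi$ is the identity on $\Z^n$. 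Hence $\Phi$ is injective and $\langle\beta_1,\dots,\beta_n\rangle\cong\Z^n$.

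The only point needing care is the commutation step: we must check that the simultaneous rotation stays inside the configuration space. It does, because points at different radii never collide. The injectivity argument via the coordinate projections is immediate. If one prefers an algebraic argument, the same result follows by induction along the Fadell--Neuwirth sequence of \cref{Fadell:Neuwirth}. There, the last braid $\beta_n$ lies in the fibre $\pi_1(\Sigma_0^{2}-Q_{n-1})$ as a nontrivial element, and the remaining braids lift a rank $n-1$ abelian subgroup of $\pb_{n-1}(\Sigma_0^{2})$. Even so, the projection argument above avoids any torsion or splitting issues. Since $\cd(\pb_n(\Sigma_0^{2}))=n$ by \cref{vcd:surfaces:braid:groups:0} and \cref{surfacebraid:puntures:boundaries}, this rank is maximal.
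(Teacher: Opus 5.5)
Your proof is correct, but it takes a different route from the paper's.

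\textbf{The paper's route.} The paper collapses one boundary component of the annulus to a marked point $p_{n+1}$, which realizes $B_n(\Sigma_0^2)$ inside the disc braid group $B_{n+1}$. It then takes the full twists $t_i=(\sigma_1\cdots\sigma_{i-1})^i$, $2\le i\le n+1$. These are Dehn twists about nested curves $a_2,\dots,a_{n+1}$, of which only $a_{n+1}$ surrounds the boundary. Commutation comes from $t_i\in\pb_i$ together with the centrality of $t_j$ in $\pb_j$ (Chow). Independence comes from an inductive computation of the algebraic crossing numbers $c_{i-1,i}$.

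\textbf{Your route.} You push each point once around its own concentric copy of the core. Commutation then comes for free from the map $T^n\to F_n(\Sigma_0^2)$. You detect independence with the inclusion $F_n(\Sigma_0^2)\hookrightarrow(\Sigma_0^2)^n$, which in fact shows that $\langle\beta_1,\dots,\beta_n\rangle\cong\Z^n$ is a retract of $\pb_n(\Sigma_0^2)$.

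\textbf{Different subgroups.} The paper's $t_2,\dots,t_n$ are supported in discs, so they die under $\pb_n(\Sigma_0^2)\to\pi_1(\Sigma_0^2)^n$, and the paper's whole subgroup maps onto a rank-one subgroup there. That is why the paper needs the finer crossing-number invariant; your invariant could not see its subgroup.

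\textbf{What each route buys.} Yours is more self-contained: it needs no identification with a subgroup of $B_{n+1}$, no description of the center of $\pb_i$, and no crossing numbers. It also transfers verbatim to any surface containing a two-sided simple closed curve of infinite order in $\pi_1$ (take parallel copies and project to $\pi_1(\Sigma)^n$). Every finite-type surface with $\chi\le 0$ contains such a curve, so you would get \cref{free:abelian:subgroup:cd:0} directly, without passing through $\Sigma_0^2$ and \cref{lemma:subsurface-injective}. The paper's argument instead stays inside the classical pure braid group and presents the subgroup as generated by Dehn twists about disjoint curves. That is the mapping-class-group picture, and the crossing-number argument does not depend on $\pi_1$ of the ambient surface.
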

\begin{proof}
\begin{figure}[!ht]
    \centering
    \includegraphics[width=0.5\linewidth]{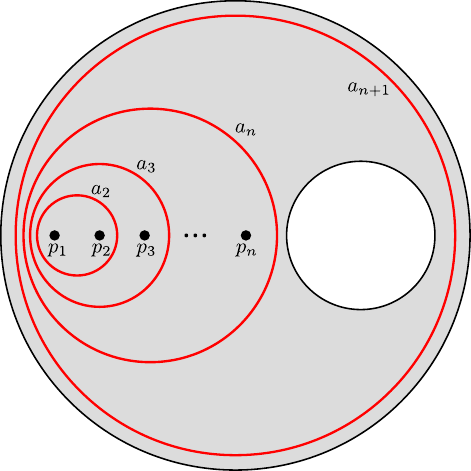}
    \caption{$\Sigma_0^2$ with $n$ marked points and $n$ simple closed curves. 
    }
    \label{absubgroup}
\end{figure}

We present $\Sigma_0^2$ as in \cref{absubgroup}, with $n$ marked points $p_1,\ldots,p_n$, which are the base points of the braids in $\pb_n(\Sigma_0^2)$. Consider the simple closed curves represented in \cref{absubgroup}: for $i=2,\ldots,n$, the curve $a_i$ encloses the punctures $p_1,\ldots,p_i$. The curve $a_{n+1}$ encloses the punctures $p_1,\ldots,p_n$ and also a boundary component.

It is well known (see for example \cite[Section 5]{Crisp99}), that the braid group of an annulus $B_n(\Sigma_0^2)$, with $n$ strands, is isomorphic to the subgroup of the classical braid group of the disk $B_{n+1}(\Sigma_0^1)$ on $n+1$ strands, formed by those braids whose last strand is vertical (equivalently, those braids whose induced permutation sends the $(n+1)$-st marked point to itself). The isomorphism is just given by collapsing one of the boundary components to form a new marked point. 

If we consider, in \cref{absubgroup}, the boundary component enclosed by the curve $a_{n+1}$ as a new marked point ($p_{n+1}$), we can describe $n$ braids in the annulus, $t_2,\ldots,t_{n+1}$, as follows: $t_i$ will be the full twist in the braid group $ \pb_i$ of the punctured disk enclosed by $a_i$. In other words, the braid $t_i$ corresponds to the Dehn twist along the curve $a_i$. In terms of the Artin generators of $\pb_i$, we have:
$$
    t_i=(\sigma_1\cdots \sigma_{i-1})^i.
$$

Notice that each group $\mathcal \pb_i$ embeds naturally in $\pb_{i+1}$, so we have a chain of subgroups
$$
 \pb_2\subset \pb_3 \subset \cdots \subset \pb_{n+1},
$$
and it is well known \cite{Chow48} that $t_i$ generates the center of $\pb_i$, for $i=2,\ldots,n+1$.

We will see that $\langle t_2,\ldots,t_{n+1}\rangle$ is a free abelian subgroup of $\pb_n(\Sigma_0^2)$ of rank $n$.

First, it is obvious that each braid $t_i$ is a pure braid (it induces the trivial permutation on the marked points). Moreover, for every $i<j$, $t_i\in \pb_i\subset \pb_j$ and $t_j$ is central in $\pb_j$, hence $t_i$ and $t_j$ commute. One can also see that these braids commute by realizing that they correspond to Dehn twists along disjoint curves. Therefore, $\langle t_2,\ldots,t_{n+1}\rangle$ is an abelian subgroup of $\pb_n(\Sigma_0^2)$. It remains to show that $\{t_2,\ldots,t_{n+1}\}$ is a free basis of this abelian group.

Consider an element $x=t_2^{e_2}\cdots t_{n+1}^{e_{n+1}}\in \langle t_2,\ldots,t_{n+1} \rangle$, where $e_2,\ldots,e_{n+1}\in \mathbb Z$, and suppose that $x=1$. We need to show that $e_2=\cdots=e_{n+1}=0$. 

Consider the algebraic crossing number $c_{n,n+1}(x)$ between the strands $n$ and $n+1$ of the braid $x$ (recall that we are considering the boundary component enclosed by $a_{n+1}$ as a marked point). Since these two strands do not cross in $t_2,\ldots,t_n$, and they cross twice in $t_{n+1}$, it follows that $c_{n,n+1}(x)=2e_{n+1}$. But $x$ is the trivial braid, hence $c_{n,n+1}(x)=0$. Therefore $e_{n+1}=0$.

Let now $i<n+1$, and assume that $e_{i+1}=e_{i+2}=\cdots=e_{n+1}=0$. This assumption implies that $x=t_2^{e_2}\cdots t_i^{e_i}=1$. Looking at the crossing number $c_{i-1,i}(x)$, we have that it is equal to $2e_i$ (since the strands $i-1$ and $i$ do not cross in $t_2,\ldots,t_{i-1}$ and they cross twice in $t_i$), and at the same time it is equal to $0$. Therefore $e_i=0$.

Applying the same argument for $i=n+1,n,\ldots,2$, we finally obtain $e_2=\cdots=e_{n+1}=0$, as we wanted to show.
\end{proof}

\begin{lemma}\cite[Theorem~1.1]{PL}\label{Virtually:dimension:Z}
Let $k,n\in \mathbb{N}$ such that $0\leq k< n$. Let $G$ be a virtually $\Z^n$ group. Then  $\gd_{\calF_k}(G)=\cd_{\calF_k}(G)=n+k$.
\end{lemma}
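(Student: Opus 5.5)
The plan is to prove the two inequalities $\gd_{\calF_k}(G)\le n+k$ and $\cd_{\calF_k}(G)\ge n+k$ separately. Since $\cd_{\calF_k}(G)\le\gd_{\calF_k}(G)$ always holds, these two bounds give both equalities.

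\textbf{Upper bound.} I argue by induction on $k$, following the scheme of the proof of \cref{upper:bound:gd:fn}. For $k=0$, the group $G$ acts properly and cocompactly on $\mathbb{R}^n$ by affine maps: it is virtually $\Z^n$, so it has a normal finite-index free abelian subgroup $A\cong\Z^n$, and I use the standard crystallographic (Bieberbach-type) realization. Hence $\gd_{\calF_0}(G)\le n$.

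For the inductive step I use \cref{lw:gd:upper:bound} with the commensurability relation on $\calF_k-\calF_{k-1}$. Let $H\in\calF_k-\calF_{k-1}$ and put $H_A=H\cap A$. Let $\overline{H}=(H_A\otimes\mathbb{Q})\cap A$ be the isolator of $H_A$ in $A$; it is a direct summand of $A$ of rank $k$. Two commensurable subgroups of $A$ have the same rational span, so $N_G[H]$ normalizes $\overline{H}$. Conversely, anything normalizing $\overline{H}$ commensurates $H$. Therefore $N_G[H]=N_G(\overline{H})$, and this is a finite-index subgroup of $G$, since it contains $A$. The Weyl group $N_G(\overline{H})/\overline{H}$ is virtually $\Z^{n-k}$, so by the case $k=0$ its proper geometric dimension is at most $n-k$.

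As in the proof of \cref{upper:bound:gd:fn}, I then split $\calF_k[H]=\calG\cup(\calF_{k-1}\cap N_G[H])$. \cref{upper:bound:gd:two:families} and \cref{nested:families:lw} bound $\gd_{\calG\cap\calF_{k-1}}(N_G[H])$ by $(n-k)+\gd_{\calF_{k-1}}(K)$ with $K$ virtually $\Z^k$. By the induction hypothesis on $k$ (applied with $k$ in place of $n$, which is legitimate since $k-1<k$), the latter is at most $2k-1$. Altogether this gives $\gd_{\calF_k[H]}(N_G[H])\le n+k$, and hence $\gd_{\calF_k}(G)\le n+k$.

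\textbf{Lower bound.} Bredon cohomological dimension does not increase when passing to a subgroup with the restricted family. So it suffices to show $\cd_{\calF_k}(\Z^n)\ge n+k$. I intend to compute Bredon cohomology inductively using the long exact Mayer--Vietoris sequence coming from the Lück--Weiermann push-out (\cref{Luck:weiermann}) for $\calF_{k-1}\subset\calF_k$. In degree $n+k$ the relevant portion reads
\[
\textstyle\prod_{[H]}H^{n+k-1}_{\calF_{k-1}\cap N[H]}(\Z^n)\oplus H^{n+k-1}_{\calF_{k}[H]}\to\cdots\to H^{n+k}_{\calF_k}(\Z^n;M),
\]
and the product runs over the infinitely many commensurability classes of rank-$k$ subgroups, which correspond to rational $k$-planes.

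Using the splitting $\Z^n=\overline{H}\oplus\Z^{n-k}$, I would show by induction on $k$ (starting from $H^n(\Z^n;\Z)\ne0$) that $H^{n+k-1}_{\calF_{k-1}}(\Z^n)$ is nonzero for a suitable coefficient module $M$. I would also show that the terms coming from $\calF_k[H]$ vanish in degree $n+k-1$, because those spaces have dimension at most $n+k-1$ with smaller top contributions. The goal is to conclude that the connecting map into $H^{n+k}_{\calF_k}$ is nonzero.

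This last step is the main obstacle: I must ensure that the infinite product over rational $k$-planes cannot be absorbed by the single term $H^{n+k-1}_{\calF_{k-1}}(\Z^n)$. My first attempt would be a cardinality or finite-generation argument: the domain term is countably generated, in a suitable sense, while the product is uncountable. Failing that, I would choose $M$ concentrated on one class $[H]$ to isolate a nonzero summand.
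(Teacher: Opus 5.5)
The paper does not prove this lemma; it imports it from \cite[Theorem~1.1]{PL}. Your upper bound is sound. It is the scheme of \cref{upper:bound:gd:fn} specialized to virtually abelian groups, where $N_G[H]=N_G(\overline{H})$ follows at once from the isolator, and where the bound $\gd_{\calF_{k-1}}(K)\le 2k-1$ comes from your own induction rather than from \cite{tomasz}. The genuine gap is the lower bound, which is the real content of the cited theorem. The step you flag as the main obstacle is not merely unfinished: the two tools you propose cannot work, and one of your auxiliary claims is false.

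Take $M=\underline{\Z}$, so that Bredon cohomology is the cohomology of the orbit space. Write $\calF_k[H]=\calF_{k-1}\cup\calG_H$, where $\calG_H$ is the family of subgroups of $\overline{H}$, so that $\gd_{\calG_H}(\Z^n)=n-k$.

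First, the claimed vanishing fails. The union Mayer--Vietoris sequence shows that the image of $H^{n+k-1}_{\calF_k[H]}(\Z^n)\to H^{n+k-1}_{\calF_{k-1}}(\Z^n)$ is the kernel of the restriction $\mathrm{res}_H$ to $\calF_{k-1}\cap\calG_H$, and this kernel is typically nonzero. For $n=3$, $k=2$ one has $H^4_{\calF_1}(\Z^3)\cong\bigl(\prod_{\mathbb{P}^2(\mathbb{Q})}\Z\bigr)/\Z$ and $H^4_{\calF_1\cap\calG_H}(\Z^3)\cong\bigl(\prod_{\mathbb{P}^1_H}\Z\bigr)/\Z$. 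Here $\mathrm{res}_H$ is restriction of functions to the rational lines in $\overline{H}\otimes\mathbb{Q}$, so $H^4_{\calF_2[H]}(\Z^3)\neq 0$. What must actually be shown is that $H^{n+k-1}_{\calF_{k-1}}(\Z^n)\to\prod_{[H]}\mathrm{im}(\mathrm{res}_H)$ is not surjective.

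Second, counting cannot show this once $k\ge 2$. Your own previous inductive step produces the domain as a quotient of an infinite product; for instance $H^{n+1}_{\calF_1}(\Z^n)\cong\bigl(\prod_{\mathbb{P}^{n-1}(\mathbb{Q})}\Z\bigr)/\Z$. So the domain has cardinality $2^{\aleph_0}$, exactly like the countable product over rational $k$-planes. The Lück--Weiermann counting argument works only for $k=1$, where the domain is $H^n(\Z^n)\cong\Z$.

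Third, concentrating $M$ on one class also fails. If $M$ vanishes on $\calF_{k-1}$, all $\calF_{k-1}$-terms die and $H^{n+k}_{\calF_k}(\Z^n;M)\cong\prod_{[H]}H^{n+k}_{\calG_H}(\Z^n;M)=0$. The top class comes precisely from the interaction between different classes, which this choice of $M$ removes.

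The missing ingredient is a gluing obstruction in the lattice of rational subspaces. For $n=3$, $k=2$, take rational planes $P_1,P_2,P_3$ with $P_1\cap P_2=C_0$, $P_1\cap P_3=C_1$, $P_2\cap P_3=C_2$ pairwise distinct lines. Prescribe $\delta_{C_0}$ on $P_1$ and $0$ on $P_2$, $P_3$ and all other planes. No $a\in\prod_{\mathbb{P}^2(\mathbb{Q})}\Z$ restricts to these data modulo constants: the values at $C_2$ and $C_1$ force the three constants to agree, while the value at $C_0$ forces them to differ by $1$. A proof of the lower bound must produce such an obstruction for all $k<n$, with explicit control of $H^{n+k-1}_{\calF_{k-1}}(\Z^n)$ and of the maps $\mathrm{res}_H$. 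Your proposal contains no substitute for this.
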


\begin{theorem}
Let \( \Sigma \) be a connected surface of finite type with at least one puncture or one boundary component and negative Euler characteristic. Then, 
\[
\gd_{\calF_r}(\pb_n(\Sigma)) = \cd(\pb_n(\Sigma)) + r.
\]
\end{theorem}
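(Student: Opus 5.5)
The upper bound is already \cref{upper:bound:gd:fn}, so only the lower bound $\gd_{\calF_r}(\pb_n(\Sigma)) \geq \cd(\pb_n(\Sigma)) + r$ needs proof. The plan is to find a free abelian subgroup $A\cong\Z^n$ in $\pb_n(\Sigma)$ and use monotonicity of $\gd_{\calF_r}$ under passing to subgroups. Since $\Sigma$ has at least one puncture or boundary component, \cref{vcd:surfaces:braid:groups:0} together with \cref{surfacebraid:puntures:boundaries} gives $\cd(\pb_n(\Sigma))=n$. So the goal is $\gd_{\calF_r}(\pb_n(\Sigma))\geq n+r$.

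\textbf{Step 1: construct $A$.} Since $\chi(\Sigma)<0$ and $\Sigma$ has a puncture or boundary component, after replacing punctures by boundary components we may embed an annulus $\Sigma_0^2$ in $\Sigma$ as follows. Take a closed collar neighbourhood of a boundary component and thicken it slightly. Its complement is then a surface of negative Euler characteristic, hence not a disk. Now \cref{lemma:subsurface-injective} shows that $B_n(\Sigma_0^2)\to B_n(\Sigma)$ is injective, and it clearly maps pure braids to pure braids. By \cref{free:abelian:subgroup:cd}, $\pb_n(\Sigma_0^2)$ contains a free abelian subgroup of rank $n$, so $\pb_n(\Sigma)$ contains $A\cong \Z^n$. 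This also gives \cref{free:abelian:subgroup:cd:0} in this case.

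\textbf{Step 2: restrict to $A$.} Restricting a model for $E_{\calF_r}\pb_n(\Sigma)$ to $A$ gives a model for $E_{\calF_r\cap A}A$, and $\calF_r\cap A$ is exactly the family $\calF_r$ of $A$. Hence $\gd_{\calF_r}(A)\leq \gd_{\calF_r}(\pb_n(\Sigma))$.

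\textbf{Step 3: evaluate on $A$.} For $0\leq r<n$, \cref{Virtually:dimension:Z} gives $\gd_{\calF_r}(A)=n+r$, which is the required bound.

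\textbf{Step 4: the case $r\geq n$ (the main obstacle).} Here $A$ itself lies in $\calF_r$, so $\gd_{\calF_r}(A)=0$ and Step 3 gives nothing. This case needs a larger virtually abelian subgroup, or a different argument. The first thing to try is to enlarge $A$. For instance, embed a subsurface carrying extra disjoint twists (Dehn twists about curves disjoint from the braiding region, realised through point-pushing) to get $\Z^{m}$ with $m>n$. However, $\cd=n$ bounds the rank of abelian subgroups by $n$, so this cannot work.

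What remains is a reformulation. For $r\geq n$ every abelian subgroup of $\pb_n(\Sigma)$ already lies in $\calF_r$, and by the strong Tits alternative so does every virtually abelian subgroup. Hence $\calF_r=\calF_n$ for all $r\geq n$. The statement is therefore really meaningful for $r\leq n$. I would then treat $r=n$ separately, using the extremal case $k=n$ of the result of \cite{PL}; if only $k<n$ is available, the argument instead takes $r\leq n-1$ together with \cite{PL} as stated. I expect pinning down exactly which range of $r$ the lower bound genuinely covers, and justifying the $r=n$ case, to be the delicate point. Everything else is the routine combination of Steps 1–3 with \cref{upper:bound:gd:fn}.
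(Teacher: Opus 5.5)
Your Steps 1--3 are the paper's proof of the lower bound. The paper embeds the annulus with non-disc complement, uses \cref{lemma:subsurface-injective} and \cref{free:abelian:subgroup:cd} to get $\Z^n\le\pb_n(\Sigma)$, and applies \cref{Virtually:dimension:Z}, taking the upper bound from \cref{upper:bound:gd:fn}. But \cref{Virtually:dimension:Z} requires $k<n$. So this argument proves the equality only for $0\le r\le n-1$, and that holds for your version and for the paper's, which applies the lemma without checking the range. You were right to flag the remaining range.

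The gap is Step 4, and it cannot be closed the way you propose. For $r>n$ there is nothing to prove, because the equality is false, and your own observation shows it. $\pb_n(\Sigma)$ is torsion-free of cohomological dimension $n$, so every virtually $\Z^m$ subgroup has $m\le n$; the Tits alternative is not needed for this. Hence $\calF_r=\calF_n$, and by \cref{upper:bound:gd:fn}
\[
\gd_{\calF_r}(\pb_n(\Sigma))=\gd_{\calF_n}(\pb_n(\Sigma))\le 2n<n+r .
\]
Even without the upper bound, $\gd_{\calF_r}$ is constant for $r\ge n$ while $n+r$ is strictly increasing. Already for $n=1$, a non-cyclic free group has $\gd_{\calF_r}=2$ for every $r\ge 1$. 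So the right conclusion is not that the statement is ``only meaningful'' for $r\le n$, but that it fails for $r>n$.

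For $r=n$, the ``extremal case $k=n$'' of \cref{Virtually:dimension:Z} that you want to invoke is vacuous. A virtually $\Z^n$ group lies in its own family $\calF_n$, so its $\calF_n$-dimension is $0$. More generally, restricting to any virtually abelian subgroup gives no information about $\calF_n$. A lower bound $\gd_{\calF_n}(\pb_n(\Sigma))\ge 2n$ needs a genuinely new input, for instance a nonvanishing Bredon cohomology class in degree $2n$ computed through the L\"uck--Weiermann pushout. Neither your proposal nor the paper provides this. The bound does hold for $n=1$, but in general the argument establishes the equality only for $0\le r<n$.
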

\begin{proof}
The upper bound was established in \cref{upper:bound:gd:fn}, and it remains to prove the lower bound. Observe that we can embed $\Sigma_{0,2}$ into $\Sigma$ as a subsurface whose complement is not a disc. By applying \cref{lemma:subsurface-injective}, we obtain an injective homomorphism
\[
\pb_n(\Sigma_0^{2}) \to \pb_n(\Sigma).
\] 
Consequently, we have the following inequalities:

\begin{alignat}{2}
\gd_{\calF_r}(\pb_n(\Sigma)) &\geq \gd_{\calF_r}(\pb_n(\Sigma_0^{2})) & & \\
&\geq \cd(\pb_n(\Sigma_0^{2})) + r &\quad&\text{by \cref{free:abelian:subgroup:cd} and \cref{Virtually:dimension:Z}} \\
&= \cd(\pb_n(\Sigma)) + r &\quad&\text{by \cref{vcd:surfaces:braid:groups:0}}.
\end{alignat}

\end{proof}

\begin{theorem}
Let \( \Sigma \) be a closed connected surface of finite type and negative Euler characteristic. Then, 
\[
\cd(\pb_n(\Sigma)) + r-1\leq\gd_{\calF_r}(\pb_n(\Sigma)) \leq \cd(\pb_n(\Sigma)) + r.
\]
\end{theorem}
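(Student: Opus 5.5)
The upper bound is exactly \cref{upper:bound:gd:fn}, which was proved for every finite-type surface with $\chi(\Sigma)\leq 0$, closed or not. So only the lower bound $\gd_{\calF_r}(\pb_n(\Sigma))\geq \cd(\pb_n(\Sigma))+r-1$ needs an argument. For a closed surface with $\chi(\Sigma)<0$, \cref{vcd:surfaces:braid:groups:0} gives $\cd(\pb_n(\Sigma))=n+1$, since the pure braid group has finite index in the full one and is torsion-free. The target inequality is therefore $\gd_{\calF_r}(\pb_n(\Sigma))\geq n+r$.

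I would copy the argument of the previous theorem. Choose an embedded annulus $\Sigma_0^2\subset\Sigma$, for instance a regular neighbourhood of a non-separating simple closed curve. Its complement is connected and has negative Euler characteristic, so none of its components is a disc. The surface $\Sigma$ is neither the sphere nor the projective plane, so \cref{lemma:subsurface-injective} applies and gives an injection $B_n(\Sigma_0^2)\to B_n(\Sigma)$. This injection restricts to an injection $\pb_n(\Sigma_0^2)\to\pb_n(\Sigma)$, because it is compatible with the permutation maps to $\mathfrak S_n$.

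By \cref{free:abelian:subgroup:cd}, $\pb_n(\Sigma_0^2)$ contains a subgroup $A\cong\Z^n$, so $\pb_n(\Sigma)$ does too. Restricting a model of $E_{\calF_r}\pb_n(\Sigma)$ to a subgroup gives a model for the restricted family, so $\gd_{\calF_r}$ is monotone under passing to subgroups. Hence
\[
\gd_{\calF_r}(\pb_n(\Sigma))\geq \gd_{\calF_r}(A).
\]
For $r<n$, \cref{Virtually:dimension:Z} gives $\gd_{\calF_r}(A)=n+r=\cd(\pb_n(\Sigma))+r-1$, which is the required bound.

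The main obstacle is the range $r\geq n$. There $A$ itself belongs to $\calF_r$, so $\gd_{\calF_r}(A)=0$ and this route fails. In that range I would instead find a larger subgroup: a free abelian subgroup of rank $n+1=\cd(\pb_n(\Sigma))$, if one exists. It should come from the twists $t_2,\dots,t_{n+1}$ of \cref{free:abelian:subgroup:cd} together with a braid in which one strand travels around a curve of $\Sigma$ disjoint from the annulus. Linear independence would be checked by a crossing-number argument together with the projection to $\pi_1(\Sigma)$ forgetting all strands but one. Such a rank-$(n+1)$ subgroup $B$ would give $\gd_{\calF_r}(\pb_n(\Sigma))\geq\gd_{\calF_r}(B)=n+1+r$ for $r<n+1$. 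That is stronger than what the statement asks in this range.

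If no such subgroup can be produced, the statement holds for $r\leq n-1$ by the argument above. For the remaining $r$ I would need a separate argument comparing $\gd_{\calF_r}$ with $\gd_{\calF_{r-1}}$ through the Lück--Weiermann pushout of \cref{Luck:weiermann}. I expect this to be the delicate point, since it is not clear how to control $\gd_{\calF_r}$ from below once every abelian subgroup of $\pb_n(\Sigma)$ already lies in the family.
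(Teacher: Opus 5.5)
Your argument for $r\le n-1$ is correct and is the same as the paper's. You embed an annulus, take the rank-$n$ subgroup of \cref{free:abelian:subgroup:cd}, restrict to it, and apply \cref{Virtually:dimension:Z}. The paper's proof does nothing more. It asserts $\gd_{\calF_r}(\pb_n(\Sigma_0^2))=\cd(\pb_n(\Sigma_0^2))+r$ for every $r$, but the only available justification is \cref{Virtually:dimension:Z}, which requires $r<n$. So the difficulty you isolate for $r\ge n$ is real, and the paper's proof does not resolve it either.

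Your proposed repair cannot work, and nothing can rescue the statement for large $r$.
\begin{itemize}
\item Since $\Sigma$ is closed with $\chi(\Sigma)<0$, the abelian subgroups of $\pi_1(\Sigma)$ are cyclic, and the fibres $F=\pi_1(\Sigma-Q_m)$ in \cref{Fadell:Neuwirth} are free.
\item For an abelian $A\le\pb_{m+1}(\Sigma)$, the rank is additive along $1\to A\cap F\to A\to\varphi(A)\to 1$. Induction then gives $\rank(A)\le n$ for every abelian $A\le\pb_n(\Sigma)$.
\item Hence $\pb_n(\Sigma)$ contains no $\Z^{n+1}$, and $\calF_r=\calF_n$ for all $r\ge n$.
\item By \cref{upper:bound:gd:fn}, $\gd_{\calF_r}(\pb_n(\Sigma))=\gd_{\calF_n}(\pb_n(\Sigma))\le 2n+1$ for all $r\ge n$.
\item The claimed lower bound is $\cd(\pb_n(\Sigma))+r-1=n+r$, which is at least $2n+2$ once $r\ge n+2$.
\end{itemize}

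A concrete instance is $n=1$ with $G=\pi_1(\Sigma)$. The commensurator of an infinite cyclic subgroup is the maximal cyclic subgroup containing it, so the second term in \cref{lw:gd:upper:bound} is $0$ and $\gd_{\calF_1}(G)\le 3$. Yet the statement demands $\gd_{\calF_3}(G)=\gd_{\calF_1}(G)\ge 4$.

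So the statement is false as stated for all $r$. Your argument proves it for $r\le n-1$. The cases $r=n$ and $r=n+1$ are not decided by this method. For $r\ge n+2$ no argument, Lück--Weiermann or otherwise, can produce the lower bound.
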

\begin{proof}
The upper bound follows from \cref{upper:bound:gd:fn}. To establish the lower bound, observe that we can embed $\Sigma_{0,2}$ into $\Sigma$ as a subsurface whose complement is not a disc. By applying \cref{lemma:subsurface-injective}, this inclusion induces an injective homomorphism
\[
\pb_n(\Sigma_0^2) \hookrightarrow \pb_n(\Sigma).
\]
Hence, we obtain:
\[
\begin{split}
\gd_{\calF_r}(\pb_n(\Sigma)) &\geq \gd_{\calF_r}(\pb_n(\Sigma_0^2)) \\
&= \cd(\pb_n(\Sigma_0^2)) + r \\
&= \cd(\pb_n(\Sigma)) + r - 1 \text{ By \cref{vcd:surfaces:braid:groups:0}}.
\end{split}
\]
\end{proof}

\subsection{The case of the sphere}

\begin{lemma}\label{upper:bound:gdf_n:ses}
Let $1 \to F \to G \xrightarrow{p} H \to 1$ be a short exact sequence with $F$
finite. Then
\[
  \gd_{\calF_n}(G) \leq \gd_{\calF_n}(H)
  \quad \text{for all } n \geq 0.
\]
\end{lemma}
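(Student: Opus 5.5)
The plan is to pull back a model for \(E_{\calF_n}H\) along \(p\) and check that it is a model for \(E_{\calF_n}G\). Let \(X\) be a model for \(E_{\calF_n}H\) of dimension \(\gd_{\calF_n}(H)\). We regard \(X\) as a \(G\)-CW-complex via \(p\), i.e.\ \(g\cdot x = p(g)x\). This restriction of a CW action along a surjection is again a \(G\)-CW-complex of the same dimension, since cells stay permuted and elements fixing a cell setwise still fix it pointwise. The isotropy group of \(x\) is \(p^{-1}(H_x)\), which is an extension of the finite group \(F\) by \(H_x\), and \(H_x\) is virtually \(\Z^r\) with \(r\le n\). Then \(p^{-1}(H_x)\) is also virtually \(\Z^r\). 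To see this, take \(A\cong\Z^r\) of finite index in \(H_x\); then \(p^{-1}(A)\) has finite index in \(p^{-1}(H_x)\) and is a finite-by-\(\Z^r\) group. The conjugation action of \(\Z^r\) on \(F\) has finite image, so a finite-index free abelian subgroup \(A'\le A\) centralizes \(F\), giving a central extension \(1\to F\to p^{-1}(A')\to A'\to 1\). A suitable finite-index subgroup of such a central extension splits off, or alternatively one can lift a basis, raise the lifts to powers, and use finiteness of \(F\) to force commutativity. Either way we obtain a subgroup isomorphic to \(\Z^r\) of finite index. So all isotropy groups lie in \(\calF_n(G)\).

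Next we verify the fixed-point characterization: \(X\) is a model for \(E_{\calF_n}G\) if and only if \(X^K\) is contractible for every \(K\in\calF_n(G)\) and empty otherwise. Take \(K\le G\). Because \(F\) acts trivially, \(X^K = X^{p(K)}\). If \(K\in\calF_n(G)\), then \(p(K)\) is a quotient of a virtually \(\Z^r\) group by a finite normal subgroup, hence virtually \(\Z^{r}\) (the image of the \(\Z^r\) subgroup is \(\Z^r\) modulo a finite subgroup, which is virtually \(\Z^r\)). So \(p(K)\in\calF_n(H)\) and \(X^K\) is contractible. Conversely, if \(X^K\neq\emptyset\), then \(K\) lies in some isotropy group, which we showed belongs to \(\calF_n(G)\); since the family is closed under subgroups, \(K\in\calF_n(G)\). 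Thus \(X\) is a model for \(E_{\calF_n}G\) and \(\gd_{\calF_n}(G)\le\dim X=\gd_{\calF_n}(H)\).

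The main obstacle is the group-theoretic step showing that a finite-by-(virtually \(\Z^r\)) group is virtually \(\Z^r\), with exactly the same \(r\) so that the rank bound defining \(\calF_n\) is preserved. I would handle it first by reducing to a central extension of \(\Z^r\) by the finite group \(F\), as above. The extension class then lies in \(H^2(\Z^r;F)\), which is finite, and it dies after pulling back along multiplication by \(|F|\). This gives a splitting over a finite-index \(\Z^r\).
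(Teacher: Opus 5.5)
Your proof is correct and follows essentially the paper's route: in both, a model for $E_{\calF_n}H$ is restricted along $p$. The differences are in how the argument is closed off.

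The paper only shows that the pulled-back family $p^*(\calG_n)$ contains $\calF_n(G)$. It then invokes the Lück--Weiermann nested-families proposition with $d=0$, asserting without proof that each $p^{-1}(L)$ is virtually $\Z^r$ with $r\le n$. You prove that fact, and you observe that it already makes the two families equal. So the restricted complex is itself a model for $E_{\calF_n}G$, and the nested-families step is unnecessary.

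There is one small slip in your central-extension route when $F$ is non-abelian. In that case the conjugation action does not factor through $A$, and $1\to F\to p^{-1}(A')\to A'\to 1$ cannot be central. Either of the following repairs it:
\begin{itemize}
\item Work with the finite-index centralizer $C_{p^{-1}(A)}(F)$. This is a central extension of a finite-index subgroup of $A$ by $Z(F)$, and your $H^2$ argument applies to it.
\item Use your lift-and-power argument instead, which works verbatim. Take powers of lifts of a basis that centralize $F$. Their commutators then lie in $F$ and commute with them, so raising to the $|F|$-th power gives pairwise commuting elements.
\end{itemize}
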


\begin{proof}
Let $\calG_n$ denote the family of virtually free abelian subgroups of $H$
of rank $r$, for some  $0 \leq r \leq n$.

The pull-back family $p^*(\calG_n)$, generated by
$\{p^{-1}(L) : L \in \calG_n\}$, contains the family $\calF_n$
of $G$. We may then apply \cref{nested:families:lw} to obtain
\[
  \gd_{\calF_n}(G)
  \leq \gd_{p^*(\calG_n)}(G)
  + \max\!\left\{\gd_{\calF_n}\!\left(p^{-1}(L)\right)
    : L \in \calG_n \right\}.
\]

Note that a model for $E_{\calG_n} H$ is a model for
$E_{p^*(\calG_n)} G$ via the action induced by the projection
$p$. On the other hand, $p^{-1}(L)$ is virtually free abelian of rank $r$ for
some $0 \leq r \leq n$, so
\[
  \gd_{\calF_n}\!\left(p^{-1}(L)\right) = 0.
\]

Therefore
\[
  \gd_{\calF_n}(G) \leq \gd_{\calF_n}(H). \qedhere
\]
\end{proof}

\begin{lemma}\label{free:abelian:subgroup:sphere}
Let $\Sigma$ be either the sphere $S^2$ or the projective plane $\mathbb{R}P^2$.
Then $\pb_n(\Sigma)$ contains a free abelian subgroup of rank $\vcd(B_n(\Sigma))$.
\end{lemma}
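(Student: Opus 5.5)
The plan is to reduce to \cref{free:abelian:subgroup:cd} by finding, inside $\pb_n(\Sigma)$, a subgroup isomorphic to a pure braid group of a punctured surface of non-positive Euler characteristic with the right number of strands. If $\vcd(B_n(\Sigma))=0$, the trivial subgroup works. By \cref{vcd:surfaces:braid:groups:0} we may therefore assume one of two cases: $\Sigma=S^2$ with $n\ge 4$ and target rank $n-3$, or $\Sigma=\mathbb{R}P^2$ with $n\ge 3$ and target rank $n-2$.

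\emph{Sphere.} Consider the map $F_n(S^2)\to F_3(S^2)$ that forgets all but the first three points. By Fadell--Neuwirth it is a fibration with fiber $F_{n-3}(S^2-Q_3)$. Since $F_3(S^2)\simeq PGL_2(\mathbb{C})\simeq SO(3)$, we have $\pi_2(F_3(S^2))=0$. The long exact sequence in homotopy then gives an injection
\[
\pb_{n-3}(S^2-Q_3)\hookrightarrow \pb_n(S^2).
\]
Equivalently, one can use the classical splitting $\pb_n(S^2)\cong \pb_{n-3}(\Sigma_{0,3})\times \Z/2$. The surface $\Sigma_{0,3}$ is not the sphere or the projective plane, and it contains an annulus $\Sigma_0^2$ (a collar around two of the punctures) whose complement is a once-punctured annulus, not a disc. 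By \cref{lemma:subsurface-injective}, $\pb_{n-3}(\Sigma_0^2)$ injects into $\pb_{n-3}(\Sigma_{0,3})$; pure braid groups are the preimages of the trivial permutation, so injectivity restricts from $B$ to $\pb$. By \cref{free:abelian:subgroup:cd}, $\pb_{n-3}(\Sigma_0^2)$ contains $\Z^{n-3}$, and composing the injections gives the required subgroup of $\pb_n(S^2)$.

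\emph{Projective plane.} Here I would use the forgetful fibration $F_n(\mathbb{R}P^2)\to F_2(\mathbb{R}P^2)$, whose fiber is $F_{n-2}(\mathbb{R}P^2-Q_2)$. The universal cover of $F_2(\mathbb{R}P^2)$ is known to be homotopy equivalent to $S^3$ (it has fundamental group $Q_8$). Hence $\pi_2(F_2(\mathbb{R}P^2))=0$, and the long exact sequence gives an injection
\[
\pb_{n-2}(\mathbb{R}P^2-Q_2)\hookrightarrow \pb_n(\mathbb{R}P^2).
\]
The surface $\mathbb{R}P^2-Q_2$ is a punctured Möbius band, with $\chi=-1$. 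It contains an annulus around one puncture whose complement is a punctured Möbius band, not a disc. Applying \cref{lemma:subsurface-injective} and \cref{free:abelian:subgroup:cd} as before yields $\Z^{n-2}\le \pb_n(\mathbb{R}P^2)$.

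The main obstacle is justifying injectivity of the fiber inclusion on $\pi_1$ in these two cases. \cref{Fadell:Neuwirth} as stated excludes $S^2$ and $\mathbb{R}P^2$, so the needed vanishing of $\pi_2$ of the base must be imported from the literature. I would cite the standard facts $F_3(S^2)\simeq SO(3)$ and $\widetilde{F_2(\mathbb{R}P^2)}\simeq S^3$, due to Fadell--Van Buskirk and Van Buskirk. Once injectivity is in place, the rest is a direct application of earlier results.
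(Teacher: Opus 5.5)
Your proposal is correct and follows the paper's route. It embeds $\pb_{n-3}(\Sigma_{0,3})$ in $\pb_n(S^2)$ (resp.\ $\pb_{n-2}(\Sigma_{-1,2})$ in $\pb_n(\mathbb{R}P^2)$), passes to an essential annulus via Paris--Rolfsen, and applies the annulus lemma; the paper simply cites the literature for the first embedding, whereas you derive it from the Fadell--Neuwirth fibration and the vanishing of $\pi_2$ of $F_3(S^2)$ and $F_2(\mathbb{R}P^2)$, and you also dispose of the $\vcd=0$ cases explicitly.
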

\begin{proof}
By \cref{vcd:surfaces:braid:groups:0}, we have $\vcd(B_n(S^2)) = n-3$ and
$\vcd(B_n(\mathbb{R}P^2)) = n-2$.

\textbf{Case $\Sigma = S^2$.}
By \cref{surfacebraid:puntures:boundaries}, $\pb_{n-3}(\Sigma_0^3) \cong \pb_{n-3}(\Sigma_{0,3})$,
and by \cite[Remark~8]{MR3382024}, $\pb_{n-3}(\Sigma_{0,3}) \leq \pb_n(S^2)$.
Since $\Sigma_{0}^3$ contains a subsurface homeomorphic to $\Sigma_{0}^2$ whose complement
is not a disc, then by \cref{lemma:subsurface-injective} $\pb_{n-3}(\Sigma_{0}^2)<\pb_{n-3}(\Sigma_{0}^3)$. Now \cref{free:abelian:subgroup:cd} applies, yielding a free abelian subgroup
of rank $n-3$ in $\pb_{n-3}(\Sigma_{0}^3)$, and hence in $\pb_n(S^2)$.

\textbf{Case $\Sigma = \mathbb{R}P^2$.}
By \cref{surfacebraid:puntures:boundaries}, $\pb_{n-2}(\Sigma_{-1}^2) \cong
\pb_{n-2}(\Sigma_{-1,2})$, and by \cite[Remark~8]{MR3382024},
$\pb_{n-2}(\Sigma_{-1,2}) \leq \pb_n(\mathbb{R}P^2)$.
The projective plane decomposes as a Möbius band glued to a disc along their boundary;
consequently, $\Sigma_{-1}^2$ is homeomorphic to a Möbius band with one open disc removed
(see \cref{fig:mobius-minus-disk}).
In particular, $\Sigma_{-1}^2$ contains a subsurface homeomorphic to $\Sigma_{0}^2$ whose
complement is not a disc, then by \cref{lemma:subsurface-injective} $\pb_{n-2}(\Sigma_{0}^2) <\pb_{n-2}(\Sigma_{-1}^2)$. Now \cref{free:abelian:subgroup:cd} applies and yields a free abelian
subgroup of rank $n-2$ in $\pb_{n-2}(\Sigma_{-1,2})$, and hence in $\pb_n(\mathbb{R}P^2)$.
\end{proof}
\begin{figure}[h]
\centering
\begin{tikzpicture}

\begin{tikzpicture}

\fill[purple!10] (0,0) rectangle (5,3);

\begin{scope}
  \clip (2.5,1.5) circle (0.8);
  \fill[white] (2.5,1.5) circle (0.8);
  \foreach \y in {0.7,0.9,...,2.3}{
    \draw[orange!40, thin] (1.7,\y) -- (3.3,\y+0.4);
  }
\end{scope}

\draw[orange!70, dashed, thick] (2.5,1.5) circle (0.8);

\draw[gray, thick] (0,0) -- (5,0);
\draw[gray, thick] (0,3) -- (5,3);

\draw[violet, very thick, -{Stealth}] (0,3) -- (0,1.5);
\draw[violet, very thick] (0,1.5) -- (0,0);

\draw[violet, very thick, -{Stealth}] (5,0) -- (5,1.5);
\draw[violet, very thick] (5,1.5) -- (5,3);

\node[violet] at (-0.3, 1.5) {$a$};
\node[violet] at (5.3, 1.5)  {$a$};

\end{tikzpicture}
\end{tikzpicture}
\caption{The surface $\Sigma_{-1}^2 \cong MB \setminus \operatorname{int}(D^2)$,
obtained from the Möbius band by removing an open disk.}
\label{fig:mobius-minus-disk}
\end{figure}
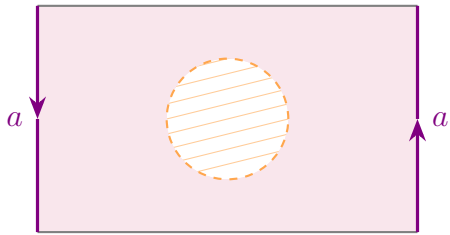

\begin{theorem}\label{abelian:dim:sphere}
Let $\Sigma$ be the sphere. For all integers $r\geq 0$
\[
  \gd_{\calF_r}(B_n(\Sigma)) = \vcd(B_n(\Sigma)) + r.
\]
\end{theorem}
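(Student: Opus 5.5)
Our plan is to prove matching upper and lower bounds. For small $n$ ($n\le 3$) the group $B_n(S^2)$ is finite, so $\vcd=0$. Every subgroup lies in $\calF_0$, the one-point space is a model, and $\gd_{\calF_r}=0$. The formula $\vcd+r$ would then fail for $r\ge1$, so we read the statement for $n\ge 4$, where \cref{vcd:surfaces:braid:groups:0} gives $\vcd(B_n(S^2))=n-3$.

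\emph{Upper bound.} We use the classical short exact sequence
\[
1\to \Z/2 \to \pb_n(S^2)\to \pb_{n-3}(\Sigma_{0,3})\to 1 ,
\]
whose kernel is the centre, generated by the full twist. Equivalently, $\pb_n(S^2)\cong \pb_{n-3}(\Sigma_{0,3})\times\Z/2$; this is the splitting behind the embedding from \cite[Remark~8]{MR3382024} used in \cref{free:abelian:subgroup:sphere}.

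\begin{enumerate}[1.]
\item By \cref{upper:bound:gdf_n:ses},
\[
\gd_{\calF_r}(\pb_n(S^2))\le \gd_{\calF_r}(\pb_{n-3}(\Sigma_{0,3})).
\]
\item The surface $\Sigma_{0,3}$ has $\chi=-1$, so \cref{upper:bound:gd:fn} gives
\[
\gd_{\calF_r}(\pb_{n-3}(\Sigma_{0,3}))\le \cd(\pb_{n-3}(\Sigma_{0,3}))+r=(n-3)+r,
\]
using \cref{vcd:surfaces:braid:groups:0} in the case $g=0$, $k+r\ge2$.
\item To pass from $\pb_n(S^2)$ to $B_n(S^2)$ we need to control a finite-index overgroup. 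We apply \cref{nested:families:lw} to $\calF_r\subseteq\calF_r$ and use induction $G\times_{P}E_{\calF_r}P$ to handle the finite extension $1\to\pb_n\to B_n\to\mathfrak S_n\to1$. Induction alone only gives a model for the family of subgroups $K$ with $K\cap\pb_n\in\calF_r$, and such $K$ are virtually $\Z^s$ with $s\le r$.
\item So that family is exactly $\calF_r$ of $B_n(S^2)$. The problem is that the induced space is a $B_n$-space only if the $\pb_n$-model extends to a $B_n$-action.
\end{enumerate}

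Because of this, the cleaner route is to redo the Lück–Weiermann induction of \cref{upper:bound:gd:fn} directly for $G=B_n(S^2)$:

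\begin{enumerate}[1.]
\item Base case: $\gd_{\calF_0}(B_n(S^2))=\vcd=n-3$, which is known for sphere braid groups.
\item Commensurator property: \cref{commen:equal:norma} transfers through the finite-index inclusion and the finite central kernel, using the \cite{Rita:Porfirio:Luis} propositions cited there, or via $B_n(S^2)/Z\hookrightarrow\mcg(S^2;n)$.
\item Weyl-group bound: the analogue of \cref{upper:bound:weil}, namely $\gdfin(W(H))\le \vcd-\rank H$, passes from $\pb_{n-3}(\Sigma_{0,3})$ by finite extensions, using \cref{exten:gd:inequality} with a finite kernel.
\item Local bound: \cite[Proposition 1.3]{tomasz} gives the bound $2k-1$ for the local term.
\end{enumerate}

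These ingredients give $\gd_{\calF_r}(B_n(S^2))\le n-3+r$.

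\emph{Lower bound.} By \cref{free:abelian:subgroup:sphere}, $\pb_n(S^2)\le B_n(S^2)$ contains $A\cong\Z^{n-3}$. Monotonicity under subgroups (restricting a model) gives
\[
\gd_{\calF_r}(B_n(S^2))\ge \gd_{\calF_r}(A).
\]
For $r<n-3$, \cref{Virtually:dimension:Z} gives $\gd_{\calF_r}(A)=n-3+r$. For $r\ge n-3$ the lemma does not apply, since $A\in\calF_r$ and $\gd_{\calF_r}(A)=0$. In that range we need a different argument. We would use that $\vcd+r$ is still attained by finding higher-rank virtually abelian subgroups. This fails because the maximal rank is $n-3$.

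So the main obstacle is the range $r\ge n-3$. There, $\calF_r$ contains every virtually abelian subgroup, and the stable value should be $\gd_{\calF_{n-3}}$ rather than growing with $r$. We expect the statement to be intended, or provable, for $0\le r<\vcd$ with the lower bound above, and we would first check whether the equality must be restricted to $r<n-3$. The hardest technical step within the valid range is verifying the Weyl-group bound for the full group $B_n(S^2)$, including the torsion elements, in place of the pure group.
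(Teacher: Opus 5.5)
Your diagnosis of the statement is correct: as written the theorem is false, and the paper's proof has exactly the two holes you found. Put $d=\vcd(B_n(S^2))$. Since $\Z^s\le B_n(S^2)$ forces $s\le d$, we have $\calF_r=\calF_d$ for every $r\ge d$. The (valid) upper bound then gives $\gd_{\calF_r}(B_n(S^2))=\gd_{\calF_d}(B_n(S^2))\le 2d<d+r$ whenever $r>d$. The simplest instance is the finite group $B_3(S^2)$, for which $\gd_{\calF_1}=0\neq 1$.

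The paper dismisses the finite cases as ``clear'', which is false for $r\ge 1$; it singles out only $n\le 2$, although $B_3(S^2)$ is finite too. Its lower bound applies \cref{Virtually:dimension:Z} to $\Z^{d}$, which is legitimate only for $r<d$. What the available arguments prove is equality for $0\le r<d$, plus the trivial case $n\le 3$, $r=0$. At $r=d\ge 1$ the lower bound $2d$ is not established, and for $r>d$ equality fails. In the valid range your lower bound is the same as the paper's.

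The upper bound is where your proposal has a genuine gap, and where the paper is much shorter. Your rerun of the L\"uck--Weiermann induction for $B_n(S^2)$ leaves unproved the step you yourself flag. Passing from $W_{\pb_n}(H)$ to $W_{B_n}(H)$ is a finite-index extension, so \cref{exten:gd:inequality} would need every finite extension of $W_{\pb_n}(H)$ to have the same $\gdfin$. That is neither checked nor automatic, and moreover $H$ may contain torsion and need not lie in $\pb_n(S^2)$. The commensurator step is likewise only asserted: \cref{property:pass:subgroup} passes to subgroups, not to finite-index overgroups, and \cref{commen:equal:norma} assumes $\chi\le 0$.

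The missing idea is to quotient the \emph{full} braid group by its centre rather than the pure one. Applying \cref{upper:bound:gdf_n:ses} to $1\to\Z/2\to B_n(S^2)\to\mcg(S^2;n)\to 1$ gives $\gd_{\calF_r}(B_n(S^2))\le\gd_{\calF_r}(\mcg(S^2;n))$. Since the $n$-punctured sphere has $\chi=2-n<0$ for $n\ge 3$, \cite[Theorem~1.5]{Rita:Porfirio:Luis} bounds this by $\vcd(\mcg(S^2;n))+r=\vcd(B_n(S^2))+r$.

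Your first attempt had the right shape. Applying it to $\pb_n(S^2)\to\pb_{n-3}(\Sigma_{0,3})$ is what created the finite-index problem. Landing instead in the full mapping class group, where the bound is already known, removes that problem and makes the rerun unnecessary.
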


\begin{proof}
If $n \leq 2$, then $B_n(S^2)$ is finite, so the claim is clear.

Now suppose  $n \geq 3$.
There is a short exact sequence \cite[Section~2.4]{MR3382024}
\[
  1 \to \Z_2 \to B_n(\Sigma) \to \mcg(\Sigma;n) \to 1.
\]
For the upper bound, we have

\begin{align*}
  \gd_{\calF_r}(B_n(\Sigma))
  &\leq \gd_{\calF_r}(\mcg(\Sigma;n))& \text{ By \cref{upper:bound:gdf_n:ses}} \\
  &\leq \vcd(\mcg(\Sigma;n)) + r &\text{ \cite[Theorem~1.5]{Rita:Porfirio:Luis}}\\
  &= \vcd(B_n(\Sigma)) + r,
\end{align*}

The lower bound follows from \cref{free:abelian:subgroup:sphere} and \cref{Virtually:dimension:Z}.
\end{proof}

\subsection{The case of the projective plane}

In this subsection, we aim to compute the $\calF_k$-geometric dimension of the pure braid group $\pb_n(\mathbb{R}P^2)$. 
As a first step, we establish that the proper geometric dimension equals their virtual cohomological dimension. 
Subsequently, we show that the commensurator of any virtually abelian subgroup of $\pb_n(\mathbb{R}P^2)$ can be realized as a normalizer. 
This result enables us to apply the strategy developed for the pure surface braid group with negative curvature.

\begin{lemma}\label{vcd:centra:finite:0:projec}
    For every finite subgroup $F$ of $\pmcg(\mathbb{R}P^2;n)$ we have $\vcd(W(F))=\vcd(N(F))=\vcd(C(F))=0$.
\end{lemma}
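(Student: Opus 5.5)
The plan is to prove that for every finite subgroup $F\le\pmcg(\mathbb{R}P^2;n)$, the normalizer $N(F)$ is finite. Once that holds, $C(F)\le N(F)$ and $W(F)=N(F)/F$ are also finite, so all three virtual cohomological dimensions vanish. The trivial subgroup must be excluded, since $N(1)$ is the whole group, which has positive vcd once $n$ is large. So I read the statement as being about nontrivial $F$ (or as holding trivially when $\pmcg(\mathbb{R}P^2;n)$ is itself finite, for $n\le 2$).

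\textbf{Step 1: realize $F$ geometrically.} By Nielsen realization for the punctured projective plane (equivalently, for its orientation double cover $S^2$ with $2n$ punctures, together with the deck involution), a finite $F$ acts by isometries on some hyperbolic or spherical structure on $\mathbb{R}P^2$ minus the $n$ marked points. The elements of $F$ are pure, so they fix each puncture. A nontrivial finite-order homeomorphism of $\mathbb{R}P^2$ fixing $n\ge 2$ points is very rigid. Lifting it to $S^2$, it becomes a rotation or reflection commuting with the antipodal map. Its fixed set in $\mathbb{R}P^2$ is then either an isolated point together with a projective line, or a single point. Hence all the marked points, except possibly one, lie on a fixed projective line, and $F$ is $\Z/2$ generated by a reflection-type involution. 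I will first pin down this classification.

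\textbf{Step 2: identify $N(F)$ with an orbifold mapping class group.} Standard Birman--Hilden / Maclachlan--Harvey theory says that $N(F)$ modulo $F$ embeds, up to finite index, in the mapping class group of the quotient orbifold $(\mathbb{R}P^2\setminus Q_n)/F$. Under the reflection this quotient is a disc whose boundary is a mirror carrying the marked points, with possibly one cone or corner point. The pure mapping classes of a disc with all marked points on a mirror boundary form a finite group, since no braiding is possible along a segment. Thus $C(F)$, $N(F)$ and $W(F)$ are finite and their vcd is $0$.

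\textbf{Main obstacle.} The hard step is Step 1's claim that every finite subgroup of the pure mapping class group is generated by such a reflection, including dealing carefully with the case where one marked point is off the line. A fallback is to pass to the double cover. Then $\pmcg(\mathbb{R}P^2;n)$ embeds in the centralizer of the antipodal involution inside $\mcg(S^2;2n)$, and a finite $F$ lifts to a finite group $\tilde F$ of isometries of $S^2$. I would then compute the centralizer of $\tilde F$ directly using the known description of normalizers of finite subgroups of spherical mapping class groups. That description shows these normalizers are finite whenever the orbifold quotient has no free punctures to braid, and here purity forces every puncture onto the fixed set.
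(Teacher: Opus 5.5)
Your route is genuinely different from the paper's, and it is sound after a small repair.

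\textbf{How the two proofs differ.} The paper argues algebraically. It lifts a nontrivial finite-order $f\in\pmcg(\mathbb{R}P^2;n)$ through $1\to\Z_2\to PB_n(\mathbb{R}P^2)\to\pmcg(\mathbb{R}P^2;n)\to 1$. It then uses three results of Gon\c{c}alves--Guaschi:
\begin{itemize}
\item the finite subgroups of $PB_n(\mathbb{R}P^2)$ are $\Z_2$, $\Z_4$ or $Q_8$;
\item $\Delta^2$ is the unique element of order $2$;
\item an element $\alpha$ of order $4$ has $C(\alpha)=\langle\alpha\rangle$.
\end{itemize}
This gives $|C(f)|\le 4$. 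The paper concludes from $C(F)\le C(f)$ and the commensurability of $C(F)$, $N(F)$ and $W(F)$.

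You instead explain why the result holds. A pure periodic class fixing $n\ge 2$ points is a $\pi$-rotation whose fixed set is a point plus a projective line. The quotient orbifold is a disc with mirror boundary carrying the marked points and one interior cone point of order $2$, and it leaves no room for braiding. This also gives, for free, the order bound $|F|\le 2$ for $n\ge 4$ that is used right after this lemma.

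The cost is machinery that must be justified in the nonorientable setting:
\begin{itemize}
\item Nielsen realization for $\mathbb{R}P^2$ minus $n$ points;
\item Ker\'ekj\'art\'o's linearization of finite actions on the double cover $S^2$;
\item finiteness of the kernel of $N(F)/F\to\mcg^{\mathrm{orb}}(\mathcal{O})$ for quotients with mirrors. Only finiteness of the kernel is needed, not a description of the image.
\end{itemize}
The paper's proof is a short paragraph once its citations are granted.

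\textbf{Corrections.}
\begin{enumerate}
\item The claim $F\cong\Z/2$ holds only for $n\ge 4$. For $n=3$, the $\pi$-rotations about the three coordinate axes form a Klein four-group fixing the three coordinate points; this is the image of the $Q_8\le PB_3(\mathbb{R}P^2)$ appearing in the paper's proof. You can either treat that quotient too (a right-angled mirror triangle, again with finite pure mapping class group), or do as the paper does: reduce to $C(f)$ for a single involution $f\in F$, using $C(F)\le C(f)$ and $[N(F):C(F)]<\infty$.
\item The cone point, being the image of the isolated fixed point, is always present, not just possibly. This is harmless. Up to a possible reflection when at most two marked points lie on the mirror, a pure orbifold homeomorphism can be isotoped to fix the mirror circle. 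What remains is the mapping class group of a disc with one interior point relative to the boundary, which is trivial.
\end{enumerate}

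Your caveat that $F$ must be nontrivial is correct. The paper's argument silently needs it as well, since the step $\vcd(C(F))\le\vcd(C(f))$ requires some $f\neq 1$ in $F$.
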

\begin{proof}
    First note that $W(F)$, $N(F)$ and $C(F)$ are commensurable, so that $\vcd(W(F))=\vcd(N(F))=\vcd(C(F))$. Hence it suffices to show that $\vcd(C(F))=0$. To this end, observe that $C(F)=\bigcap_{f\in F} C(f)$. We claim that $C(f)$ is finite for every nontrivial $f\in F$; granting this, $\vcd(C(F))\leq \vcd(C(f))=0$.

 To prove the claim, consider the following short exact sequence, which holds for $n\geq 2$  \cite[Section~2.4]{MR3382024}:
  \[
    1 \to \Z_2 \to B_n(\mathbb{R}P^2) \to \mcg(\mathbb{R}P^2;n) \to 1.
  \]
  The generator of the kernel is the full twist $\Delta^2$ \cite{MR3382024}, hence it is a pure braid. Therefore, we can restrict the above exact sequence to pure braids and mapping classes, obtaining:
  \[
    1 \to \Z_2 \stackrel{i}{\to} PB_n(\mathbb{R}P^2) \stackrel{p}{\to} \pmcg(\mathbb{R}P^2;n) \to 1.
  \]
  Let $f$ be a nontrivial finite order element in $\pmcg(\mathbb{R}P^2;n)$, and let $\alpha$ be such that $p(\alpha)=f$. Since $\ker(p)$ is finite, it follows that $\alpha$ has finite order. In \cite[Proposition~1]{MR2607582} it is shown that a maximal finite subgroup of $PB_n(\mathbb{R}P^2)$ is isomorphic to either $\mathbb{Z}_2$, or $\mathbb{Z}_4$, or $\mathcal Q_8$ (the latter only if $n=2$ or $n=3$). Therefore, the order of $\alpha$ is either 2 or 4. Moreover, it is shown in \cite[Proposition~15]{MR2100679} that the only element of order 2 in $PB_n(\mathbb{R}P^2)$ is precisely the full twist $\Delta^2$, which belongs to $\ker(p)$. Therefore, $\alpha$ cannot have order 2 (otherwise $f$ would be trivial). Hence, $\alpha$ has order 4, $\alpha^2=\Delta^2$, and $f$ has order 2.

  We recall also that, by \cite[Corollary~4]{MR2607582}, the centralizer of an element $\alpha\in PB_n(\mathbb{R}P^2)$ of order 4 is precisely $\langle \alpha\rangle$. Hence $\#(C(\alpha))=\#(\{1,\alpha,\alpha^2,\alpha^3\})=4$.

  Let $g\in C(f)$, and let $\gamma$ be such that $p(\gamma)=g$. We have $gfg^{-1}f^{-1}=1$, hence $\gamma \alpha \gamma^{-1} \alpha^{-1}$ is equal to either 1 or $\Delta^2$. In the former case, $\gamma\in C(\alpha)$. In the latter case, $\gamma \alpha \gamma^{-1} \alpha^{-1}=\alpha^2$, that is, $\gamma \alpha \gamma^{-1} = \alpha^{3}$. If the latter case occurs, fix one element $x\in PB_n(\mathbb{R}P^2)$ such that $x\alpha x^{-1}=\alpha^3$. Then $(x^{-1}\gamma)\alpha (\gamma^{-1}x) = x^{-1}\alpha^3x = \alpha$, that is, $x^{-1}\gamma\in C(\alpha)$, which implies that $\gamma$ belongs to the lateral class $xC(\alpha)$. We have then shown that $\gamma \in C(\alpha)\cup (xC(\alpha))=\{1,\alpha,\alpha^2,\alpha^3, x,x\alpha, x\alpha^2, x\alpha^3\}$. Now, since $p(\alpha^2)=1$, we obtain that $g=p(\gamma)\in \{1,f,p(x),p(x)f\}$. Therefore $\#(C(f))\leq 4$. 
\end{proof}

\begin{lemma}\label{finite:sungroup:ord:ateast2}
    Every finite subgroup $F$ of $\pmcg(\mathbb{R}P^2;n)$ and $n\geq 4$ has order at most 2.
\end{lemma}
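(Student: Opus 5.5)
We reduce the statement to the structure of finite subgroups of $\pb_n(\mathbb{R}P^2)$ via the short exact sequence used in the proof of \cref{vcd:centra:finite:0:projec}:
\[
  1 \to \Z_2 \to \pb_n(\mathbb{R}P^2) \xrightarrow{p} \pmcg(\mathbb{R}P^2;n) \to 1,
\]
whose kernel is generated by the full twist $\Delta^2$. Let $F \leq \pmcg(\mathbb{R}P^2;n)$ be finite. The preimage $\tilde F = p^{-1}(F)$ is then a finite subgroup of $\pb_n(\mathbb{R}P^2)$ containing $\langle \Delta^2\rangle$, and $|F| = |\tilde F|/2$. So it suffices to show that every finite subgroup of $\pb_n(\mathbb{R}P^2)$ has order at most $4$ when $n\geq 4$.

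For this we use \cite[Proposition~1]{MR2607582}, already quoted above. A maximal finite subgroup of $\pb_n(\mathbb{R}P^2)$ is isomorphic to $\Z_2$, $\Z_4$, or $\mathcal Q_8$, and the quaternion case occurs only for $n=2$ or $n=3$. Since $n\geq 4$, every finite subgroup lies in a copy of $\Z_2$ or $\Z_4$, and so has order at most $4$. Hence $|F| \leq 2$.

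It is worth recording the finer picture, which we may need later. By \cite[Proposition~15]{MR2100679}, $\Delta^2$ is the unique element of order $2$. So either $\tilde F = \langle \Delta^2\rangle$, giving $F$ trivial, or $\tilde F = \langle \alpha\rangle \cong \Z_4$ with $\alpha^2=\Delta^2$, giving $F \cong \Z_2$ generated by $p(\alpha)$. This matches the argument in \cref{vcd:centra:finite:0:projec}.

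The main obstacle is the hypothesis $n \geq 4$, which is exactly what rules out $\mathcal Q_8$. We need to confirm that the cited classification really excludes quaternion subgroups for all $n\geq 4$, and that it concerns the pure braid group rather than the full braid group. If the citation only gives maximal finite subgroups of $B_n(\mathbb{R}P^2)$, the fallback is to intersect with $\pb_n(\mathbb{R}P^2)$. One would then check, using the uniqueness of the order-$2$ element, that a finite subgroup of $\pb_n(\mathbb{R}P^2)$ has a unique involution. It is therefore cyclic or generalized quaternion, and its order is bounded using the known torsion of $\pb_n(\mathbb{R}P^2)$, which is only $2$ and $4$ for $n\ge 4$.
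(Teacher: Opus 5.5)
Your argument is correct and essentially the paper's own. The paper's proof is a one-line appeal to the pure-braid short exact sequence $1\to\Z_2\to\pb_n(\mathbb{R}P^2)\to\pmcg(\mathbb{R}P^2;n)\to 1$ together with \cite[Proposition~1]{MR2607582}, which is exactly your computation $|F|=|p^{-1}(F)|/2\le 4/2$. Drop the fallback in your last paragraph: it is unnecessary, and it would not on its own rule out $\mathcal Q_8$, since the nontrivial elements of $\mathcal Q_8$ also have orders only $2$ and $4$.
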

\begin{proof}
It is a consequence of the last short exact sequence of \cite[Section~2.4]{MR3382024} and Proposition 1 of \cite{MR2607582}.  
\end{proof}

\begin{proposition}
    For the projective plane we have $\cd_{\calF_0}(\pmcg(\mathbb{R}P^2;n))=\vcd(\pmcg(\mathbb{R}P^2))$.
\end{proposition}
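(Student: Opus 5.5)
We want $\cd_{\calF_0}(G)=\vcd(G)$ for $G=\pmcg(\mathbb{R}P^2;n)$ (reading the right-hand side as the virtual cohomological dimension of $\pmcg(\mathbb{R}P^2;n)$). The plan is to combine the usual lower bound with Connolly--Kozniewski/Lück type upper bounds for groups whose finite subgroups have small Weyl groups. First, the inequality $\vcd(G)\le \cd_{\calF_0}(G)$ always holds for groups admitting a torsion-free finite index subgroup. Here $G$ has one: by the exact sequence $1\to\Z_2\to PB_n(\mathbb{R}P^2)\to G\to 1$ and finite index torsion-free subgroups of $PB_n(\mathbb{R}P^2)$ (e.g.\ the pure braid group of the punctured surface obtained from the Fadell--Neuwirth tower starting at $n-1$, or simply a torsion-free finite index subgroup intersecting the $\Z_2$ trivially). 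So only the upper bound $\cd_{\calF_0}(G)\le \vcd(G)$ needs work.

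For the upper bound I would use the criterion of Martínez-Pérez (or the Lück--Weiermann style argument): $\cd_{\calF_0}(G)\le \max\{\vcd(G),\ \sup_F \vcd(W_G(F))+\text{(length term)}\}$. Concretely, the Bredon cohomological dimension of a virtually torsion-free group satisfies
\[
\cd_{\calF_0}(G)=\max_{F\in\calF_0}\{\cd\text{ of }W_G(F)\text{ relative to the poset of strictly larger finite subgroups}\},
\]
and when every nontrivial finite subgroup $F$ has $\vcd(W_G(F))=0$ and the poset of finite subgroups has length at most one, the contributions of the nontrivial $F$ are bounded by $1$, while the trivial subgroup contributes at most $\vcd(G)$ (via the Mayer--Vietoris/push-out comparing $E_{\calF_0}G$ with $EG$ after gluing in the $0$-dimensional data at finite subgroups). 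By \cref{finite:sungroup:ord:ateast2}, for $n\ge4$ every nontrivial finite subgroup has order $2$, so the poset of nontrivial finite subgroups is discrete, and by \cref{vcd:centra:finite:0:projec} each $W_G(F)$ is finite. Then I would build a model explicitly: take a proper model $X$ of dimension $\vcd(G)$ for the torsion-free part, and apply the push-out of \cref{lemma:union:families} type (Lück--Weiermann with the family of subgroups of $F$ for each order-2 $F$, whose commensurator/normalizer $N_G(F)$ is finite), which only adds cells of dimension at most $1$; hence $\cd_{\calF_0}(G)\le\max\{\vcd(G),1\}=\vcd(G)$ provided $\vcd(G)\ge1$.

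The remaining pieces are the small cases $n\le3$ and the verification that $\vcd(G)\ge 1$; for $n\le3$ the group $\pmcg(\mathbb{R}P^2;n)$ is finite or virtually cyclic and the equality is checked by hand. The main obstacle is the step producing a model for the free part: it requires knowing $\cd(\Gamma)=\vcd(G)$ with $\Gamma$ torsion-free of finite index \emph{and} that $\cd_{\calF_0}$ of the pair (trivial family relative to $\{1\}\cup\{F\}$) is governed by $\vcd$ rather than by $\cd$ of a normalizer; I would handle it via Martínez-Pérez's formula $\cd_{\calF_0}(G)=\max_F \cd_{W_G(F)}$ of the fixed-point-free part, using finiteness of all $W_G(F)$ for $F\ne1$ to reduce everything to the $F=1$ term, which is $\vcd(G)$.
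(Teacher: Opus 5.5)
Your route is the paper's. For $n\ge 4$ it also invokes the Aramayona--Mart\'{\i}nez-P\'erez criterion \cite[Theorem 3.3]{AMP14}, which says it suffices that $\vcd(W(F))+\lambda(F)\le \vcd(G)$ for every finite $F$, feeds it exactly \cref{vcd:centra:finite:0:projec} and \cref{finite:sungroup:ord:ateast2}, and handles $n\le 3$ directly.

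Two slips to fix, neither of which affects the argument. First, $\pmcg(\mathbb{R}P^2;3)$ is virtually a free group of rank $2$, not virtually cyclic, though the equality $\cd_{\calF_0}=\vcd=1$ still holds. Second, drop the ``explicit model'' aside: $G$ has torsion, so it has no finite-dimensional free $G$-CW model from which to start a push-out.
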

\begin{proof}
    For $n=1,2$ the group $\pmcg(\mathbb{R}P^2;n)$ is finite, and for $n=3$ it is virtually free; for $n=1,2$ the statement is immediate, for a virtually free group the fact is well known. It therefore suffices to establish the claim for $n\ge 4$.
    By \cite[Theorem 3.3]{AMP14} it is enough to show that every finite subgroup $F$ of $\pmcg(\mathbb{R}P^2;n)$ satisfies the inequality
    \[
        \vcd(W(F))+\lambda(F)\leq \vcd(\pmcg(\mathbb{R}P^2;n)).
    \]
    By \cref{vcd:centra:finite:0:projec}, every finite subgroup $F$ of $\pmcg(\mathbb{R}P^2;n)$ satisfies $\vcd(W(F))=0$, and by \cref{finite:sungroup:ord:ateast2} every such subgroup has order at most $2$, hence $\lambda(F)\leq 2$. Since $\vcd(\pmcg(\mathbb{R}P^2;n))=n-2$ for all $n\ge 3$, the desired inequality holds.
\end{proof}

\begin{corollary}\label{vcd:gdfin:pure:projec}
    For the projective plane, we have $\gd_{\calF_0}(\pb_n(\mathbb{RP}^2)) = \vcd(\pb_n(\mathbb{RP}^2))$.
\end{corollary}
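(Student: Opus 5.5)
The plan is to transfer the result for $\pmcg(\mathbb{R}P^2;n)$ from the preceding proposition to $\pb_n(\mathbb{RP}^2)$ through the central extension
\[
  1 \to \Z_2 \to \pb_n(\mathbb{R}P^2) \xrightarrow{p} \pmcg(\mathbb{R}P^2;n) \to 1
\]
obtained in the proof of \cref{vcd:centra:finite:0:projec}, which is valid for $n \geq 2$.

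The case $n=1$ is trivial, since $\pb_1(\mathbb{RP}^2)=\Z_2$ is finite and both sides vanish.

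\textbf{Upper bound.} For $n\geq 2$, apply \cref{upper:bound:gdf_n:ses} with $n=0$ and $F=\Z_2$. This gives $\gd_{\calF_0}(\pb_n(\mathbb{RP}^2)) \leq \gd_{\calF_0}(\pmcg(\mathbb{R}P^2;n))$. Alternatively, observe directly that a model for $\underline{E}\,\pmcg(\mathbb{R}P^2;n)$, pulled back along $p$, is a model for $\underline{E}\,\pb_n(\mathbb{RP}^2)$, because preimages of finite subgroups are finite.

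Next, pass from the Bredon cohomological dimension to the geometric one. When $\cd_{\calF_0}(\pmcg(\mathbb{R}P^2;n)) \geq 3$, the inequality $\gd_{\calF_0} \leq \max\{\cd_{\calF_0},3\}$ of \cite{LM00} together with the preceding proposition gives $\gd_{\calF_0}(\pmcg(\mathbb{R}P^2;n)) = \vcd(\pmcg(\mathbb{R}P^2;n))$.

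\textbf{Small cases.} These need separate treatment.
\begin{itemize}
\item For $n\leq 2$ the groups are finite, so $\gd_{\calF_0}=0=\vcd$.
\item For $n=3$ the group $\pmcg(\mathbb{R}P^2;3)$ is virtually free, hence admits a tree as a model, so $\gd_{\calF_0} \leq 1 = \vcd$.
\item For $n=4$, where $\vcd = 2$, I would invoke the known fact that $\cd_{\calF_0}=2$ implies $\gd_{\calF_0}=2$ for virtually torsion-free groups with a uniform bound on the orders of finite subgroups. The bound holds here by \cref{finite:sungroup:ord:ateast2}, since finite subgroups have order at most $2$. Failing that, I would build a $2$-dimensional model by hand from the Fadell--Neuwirth type description.
\end{itemize}
I expect this low-dimensional gap to be the main obstacle.

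\textbf{Lower bound.} Since $\pb_n(\mathbb{RP}^2)$ is a finite-index extension with finite kernel of $\pmcg(\mathbb{R}P^2;n)$, we have $\vcd(\pb_n(\mathbb{RP}^2)) = \vcd(\pmcg(\mathbb{R}P^2;n))$. Moreover $\gd_{\calF_0}(G) \geq \vcd(G)$ always holds: a torsion-free finite-index subgroup acts freely on any proper model, so that model is a free model for the subgroup.

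\textbf{Conclusion.} Combining the two bounds gives
\[
  \vcd(\pb_n(\mathbb{RP}^2)) \leq \gd_{\calF_0}(\pb_n(\mathbb{RP}^2)) \leq \gd_{\calF_0}(\pmcg(\mathbb{R}P^2;n)) = \vcd(\pmcg(\mathbb{R}P^2;n)) = \vcd(\pb_n(\mathbb{RP}^2)),
\]
which proves the corollary.
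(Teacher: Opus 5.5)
Your treatment of $n\le 3$ and $n\ge 5$ is sound. For $n\ge5$ it follows the paper: the preceding proposition plus the Lück--Meintrup bound. Your explicit pull-back along $p$ via \cref{upper:bound:gdf_n:ses} also makes the passage from $\pmcg(\mathbb{RP}^2;n)$ to $\pb_n(\mathbb{RP}^2)$ more explicit than the paper does.

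The gap is $n=4$, and the ``known fact'' you propose to invoke there is false. Going from $\cd_{\calF_0}=2$ to $\gd_{\calF_0}=2$ is the Eilenberg--Ganea problem for proper actions, which has a negative answer in general. The Brady--Leary--Nucinkis group quoted in the paper's introduction \cite{NB} is a right-angled Coxeter group $W$ with $\cd_{\calF_0}(W)=2$ and $\gd_{\calF_0}(W)=3$. Such a $W$ is finitely generated and linear, hence virtually torsion-free. Each finite subgroup is conjugate into one of the finitely many finite special subgroups, so the orders of finite subgroups are uniformly bounded. Thus $W$ satisfies all your hypotheses but not your conclusion. Knowing $\cd_{\calF_0}(\pmcg(\mathbb{RP}^2;4))=2$ from the proposition therefore does not give you a $2$-dimensional model. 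Your fallback, ``build a model by hand from the Fadell--Neuwirth description'', is not yet an argument.

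The paper closes this case with a geometric input applied directly to $\pb_4(\mathbb{RP}^2)$, without passing through $\pmcg$ or the Bredon-cohomological proposition. It uses the Fadell--Neuwirth sequence
\[
1\to\pi_1(\mathbb{RP}^2-Q)\to\pb_4(\mathbb{RP}^2)\to\pb_3(\mathbb{RP}^2)\to1,
\]
with $Q$ a set of three points, so the kernel is free of rank $3$. Any group containing a finitely generated free group with finite index is virtually free and acts properly on a tree. So the hypothesis of \cref{exten:gd:inequality} holds for the kernel, with $\gd_{\calF_0}=1$. The quotient $\pb_3(\mathbb{RP}^2)$ is infinite and virtually free, so $\gd_{\calF_0}(\pb_3(\mathbb{RP}^2))=1$. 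Hence $\gd_{\calF_0}(\pb_4(\mathbb{RP}^2))\le 1+1=2=\vcd(\pb_4(\mathbb{RP}^2))$, and your lower-bound argument finishes the case.
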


\begin{proof}
    For $n = 1, 2$, the group $\pmcg(\mathbb{RP}^2; n)$ is finite, and for $n = 3$ it is virtually free. 
    In all these cases, the statement follows immediately. 

    If $n=4$ we use the following short exact sequence (\cite{Fadell1962}, see also \cite{GG2010}):

    \[
1 \longrightarrow \pi_1(\mathbb{R}P^2-Q) \longrightarrow \pb_{4}(\mathbb{R}P^2) \xrightarrow{\;p\;} \pb_3(\mathbb{R}P^2) \longrightarrow 1, \tag{$*$}
\]
where $Q\subset\mathbb{R}P^2$ is a set of $3$ points and $\pi_1(\mathbb{R}P^2-Q)$ is free. Since every virtually free group has $\gdfin=1$, we can use \cref{exten:gd:inequality} deduce that 
$$\gdfin(\pb_{4}(\mathbb{R}P^2))\leq \gdfin(\pi_1(\mathbb{R}P^2-Q))+ \gdfin(\pb_3(\mathbb{R}P^2))=1+1=2.$$
Since $\pb_{4}(\mathbb{R}P^2)$ is virtually torsion-free, we have $\gdfin(\pb_{4}(\mathbb{R}P^2))\geq \vcd(\pb_{4}(\mathbb{R}P^2))=2$.

    For $n \geq 5$, by \cref{free:abelian:subgroup:sphere}, the group $\pmcg(\mathbb{RP}^2; n)$ admits a free abelian subgroup of rank $n-2$. 
    Therefore, the monotonicity of cohomological dimension yields
    \[
    \cd_{\calF_0}(\pmcg(\mathbb{RP}^2; n)) \geq n - 2.
    \]
    The result now follows from the inequalities \cite[Theorem 0.1]{LM00}:
    \[
    \cd_{\calF_0}(G) \leq \gd_{\calF_0}(G) \leq \max\{\cd_{\calF_0}(G), 3\}.
    \]
\end{proof}

We now turn to showing that the commensurators of virtually abelian subgroups in the pure braid group of the projective plane $\pmcg(\mathbb{RP}^2; n)$ is realized as normalizers.

\begin{definition}[(Strong) Condition $C$]
Let $n$ be a natural number. We say that a group $G$ satisfies condition
$C_n$ (resp.\ strong condition $C_n$) if for every free abelian subgroup
$H$ of $G$ of rank $n$, and for all finitely generated
$K \subset N_G[H]$, there is $H'$ commensurable with $H$
(resp.\ $H'$ a finite index subgroup of $H$) such that
\[
\langle H,K\rangle \subset N_G(H').
\]
Whenever $G$ satisfies condition $C_n$
(resp.\ strong condition $C_n$), for all $n$, we say that
$G$ satisfies condition $C$ (resp.\ strong condition $C$).
\end{definition}

\begin{lemma}\label{comm:equal:normalizer}
Let $G$ be a group and let $H$ be a free abelian subgroup.
Assume that $G$ satisfies condition $C$
(resp.\ strong condition $C$), and that $N_G[H]$
is finitely generated. Then there is $H'$
commensurable with $H$
(resp.\ $H'$ a finite index subgroup of $H$)
such that
\[
N_G[H]=N_G(H').
\]
\end{lemma}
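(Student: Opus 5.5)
The idea is to apply condition $C$ to a single finite generating set of the commensurator, which is available by hypothesis. Write $N := N_G[H]$. By assumption $N$ is finitely generated, so we may choose a finite set $S$ generating $N$ and put $K := \langle S\rangle = N$. This $K$ is a finitely generated subgroup of $N_G[H]$. Since $H$ is free abelian, say of rank $n$, condition $C_n$ gives $H'$, commensurable with $H$ (respectively, of finite index in $H$ under the strong condition), such that
\[
\langle H, K\rangle \subseteq N_G(H').
\]
Because $H \subseteq N$ and $K = N$, we get $\langle H,K\rangle = N$, and hence $N_G[H] \subseteq N_G(H')$.

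For the reverse inclusion, we use two standard facts. First, every normalizer lies in the commensurator, so $N_G(H') \subseteq N_G[H']$. Second, the commensurator depends only on the commensurability class. Indeed, if $H'$ is commensurable with $H$ and $g \in N_G[H']$, then $gHg^{-1}$ is commensurable with $gH'g^{-1}$, which is commensurable with $H'$, which in turn is commensurable with $H$. Commensurability is transitive: for subgroups $A,B,C$ with $A\sim B$ and $B\sim C$, the intersection $A\cap B\cap C$ has finite index in $A\cap B$ and in $B\cap C$, hence in $A$ and in $C$. Therefore $N_G[H'] = N_G[H]$, which gives $N_G(H') \subseteq N_G[H]$ and so $N_G[H] = N_G(H')$.

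The only point that needs care is that $H'$ comes from condition $C$ applied to $H$ itself, with $K = N_G[H]$. This step is legitimate precisely because of the finite generation hypothesis, and it is the only place that hypothesis is used. In the strong case, the same argument goes through verbatim, and $H'$ is automatically a finite index subgroup of $H$. I expect no real obstacle; the argument is essentially bookkeeping.
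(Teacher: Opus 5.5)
Your proof is correct. The paper states this lemma without proof, importing it from the literature on condition $C$, and your argument is the standard justification behind it: apply condition $C_n$ with $K=N_G[H]$ itself (allowed by finite generation) to get $N_G[H]=\langle H,K\rangle\subseteq N_G(H')$, then use $N_G(H')\subseteq N_G[H']=N_G[H]$. The one tacit assumption, which you make by writing ``say of rank $n$'' and which the lemma intends, is that $H$ has finite rank, so that some $C_n$ applies.
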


To verify that braid groups of the projective plane satisfy condition C, we will show that all virtually abelian subgroups are separable. Recall that if $G$ is a group and $H$ a subgroup of $G$, then $H$ is called
\emph{separable} in $G$ if for every $g \in G \setminus H$, there exists a subgroup $K$
of finite index in $G$ such that $H \subset K$ but $g \notin K$.  It is well known that a subgroup is separable if and only if it is closed in the profinite topology. Recall that the profinite topology has as a base the family of right cosets $\{gK \mid K \leq G, [G;K]< \infty, \, g \in G\}$, where $K$ ranges over all subgroups of finite index in $G$. 

The following lemma is an immediate consequence of the definition.

\begin{lemma}\label{lem:finite-index-separability}
Let $K \leq G$ be groups with $[G:K] < \infty$, and let $H \leq K$.
Then $H$ is separable in $K$ if and only if $H$ is separable in $G$.
\end{lemma}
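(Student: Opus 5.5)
We prove both implications directly from the definition of separability; finiteness of $[G:K]$ is used only in the converse. Throughout, $H \leq K \leq G$ with $[G:K]<\infty$.

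\emph{Separable in $G$ implies separable in $K$.} Let $k \in K \setminus H$. Since $k \in G\setminus H$, there is a finite-index subgroup $M \leq G$ with $H \subseteq M$ and $k \notin M$. Put $M' = M \cap K$. Then $[K:M'] \leq [G:M] < \infty$, we have $H \subseteq M'$ because $H \subseteq M$ and $H\subseteq K$, and $k \notin M'$. So $M'$ separates $k$ from $H$ inside $K$. Finiteness of $[G:K]$ is not needed here.

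\emph{Separable in $K$ implies separable in $G$.} Let $g \in G \setminus H$. There are two cases.

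If $g \notin K$, then $K$ itself is a finite-index subgroup of $G$ that contains $H$ and excludes $g$, so we are done.

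If $g \in K$, separability in $K$ gives $M \leq K$ with $[K:M]<\infty$, $H \subseteq M$ and $g \notin M$. Then $[G:M] = [G:K][K:M] < \infty$, so $M$ is a finite-index subgroup of $G$ separating $g$ from $H$.

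Since $g$ was arbitrary, $H$ is separable in $G$.

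Alternatively, one can use the profinite-topology formulation. The topology induced on $K$ by the profinite topology of $G$ coincides with the profinite topology of $K$ when $[G:K]<\infty$. Moreover, $K$ is open, hence also closed, in $G$. So $H$ is closed in $K$ if and only if it is closed in $G$. The direct argument above avoids having to verify these topological facts, so we adopt it.

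There is no real obstacle. The only point needing care is the converse in the case $g\notin K$, where the hypothesis $[G:K]<\infty$ is exactly what allows $K$ itself to serve as the separating subgroup.
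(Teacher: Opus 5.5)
Your proof is correct and follows the paper's approach. The paper gives no proof beyond calling the lemma an immediate consequence of the definition, and your direct two-direction check is exactly that argument: intersect with $K$ in one direction, and in the other use multiplicativity of indices, or $K$ itself when $g\notin K$.
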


\begin{lemma}\label{lem:vab-finite-index}
Let $K \leq G$ be groups with $[G:K] < \infty$. If every virtually
abelian subgroup of $K$ is separable in $K$, then every virtually
abelian subgroup of $G$ is separable in $G$.
\end{lemma}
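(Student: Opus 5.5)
The natural approach is to reduce separability of $H\le G$ to separability of $H\cap K$ in $K$, and then pass back to $G$ using the fact that $H\cap K$ has finite index in $H$. Let $H\le G$ be virtually abelian.

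\textbf{Step 1: pass to $K$.} Since $[G:K]<\infty$, the subgroup $H\cap K$ has finite index in $H$. It is virtually abelian, being a subgroup of a virtually abelian group. By hypothesis $H\cap K$ is separable in $K$. By \cref{lem:finite-index-separability}, $H\cap K$ is then separable in $G$, i.e.\ closed in the profinite topology of $G$.

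\textbf{Step 2: return to $H$.} Choose coset representatives $h_1,\dots,h_s\in H$ with
\[
H=\bigcup_{j=1}^{s} h_j(H\cap K).
\]
Left multiplication by a fixed element of $G$ is a homeomorphism for the profinite topology, since it maps basic open cosets $gK'$ to basic open cosets. Hence each $h_j(H\cap K)$ is closed in $G$. A finite union of closed sets is closed, so $H$ is closed in the profinite topology of $G$, that is, separable in $G$.

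\textbf{Alternative to Step 2 via the definition.} Given $g\in G\setminus H$, each $h_j^{-1}g$ lies outside $H\cap K$. So there are finite-index subgroups $K_j\le G$ containing $H\cap K$ with $h_j^{-1}g\notin K_j$. The subgroup $\bigcap_j K_j$ has finite index, but it need not contain $H$. For that reason the topological formulation above is cleaner, and it is the one I would use.

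\textbf{Main obstacle.} The only delicate point is the translation between separability in $K$ and in $G$, and the closedness of translates. Both are handled by \cref{lem:finite-index-separability} and the remark in the paper that separable subgroups are exactly the profinitely closed ones. No other difficulty is expected.
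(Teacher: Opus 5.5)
Your proof is correct and follows the paper's argument step for step. You intersect with $K$, use \cref{lem:finite-index-separability} to get $H\cap K$ closed in the profinite topology of $G$, and then write $H$ as a finite union of left translates of $H\cap K$, each of which is closed.
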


\begin{proof}
Let $H \leq G$ be a virtually abelian subgroup, and set $H_0 := H \cap K$.
Since $[G:K] < \infty$, we have $[H:H_0] \leq [G:K] < \infty$, so $H_0$
has finite index in $H$. In particular, $H_0$ is a virtually abelian
subgroup of $K$, so by hypothesis $H_0$ is separable in $K$.
By \cref{lem:finite-index-separability}, $H_0$ is separable in $G$.

Write $H = \bigsqcup_{i=1}^{r} h_i H_0$ for some $h_1, \ldots, h_r \in H$.
Since $H_0$ is separable in $G$, it is closed in the profinite topology
of $G$. Left translation by $h_i$ is a homeomorphism in this topology,
so each coset $h_i H_0$ is closed. Therefore $H$, as a finite union of
closed sets, is closed in the profinite topology of $G$, i.e., $H$ is
separable in $G$.
\end{proof}

\begin{corollary}\label{separable:sphere:braid}
    Every virtually abelian subgroup of $B_n(S^2)$ is separable.
\end{corollary}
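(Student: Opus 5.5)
The plan is to reduce to a torsion-free finite-index subgroup and then use residual properties of (pure) braid groups. We have
\[
1 \to \mathbb{Z}_2 \to B_n(S^2) \to \mcg(S^2;n) \to 1,
\]
and $\pb_n(S^2)$ has finite index in $B_n(S^2)$. By \cref{lem:vab-finite-index} it suffices to exhibit a finite-index subgroup $K\le B_n(S^2)$ in which every virtually abelian subgroup is separable. For $n\le 2$ the group $B_n(S^2)$ is finite and every subgroup is trivially separable, so assume $n\ge 3$.

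For $K$ I would take a torsion-free finite-index subgroup of $\pb_n(S^2)$. The candidate is the following. By \cite[Remark~8]{MR3382024}, $\pb_n(S^2)\cong \pb_{n-3}(\Sigma_{0,3})\times \mathbb{Z}_2$. Hence $K=\pb_{n-3}(\Sigma_{0,3})$, viewed as a subgroup of index $2$ in $\pb_n(S^2)$, is torsion-free of finite index in $B_n(S^2)$. Moreover, $\Sigma_{0,3}$ is a sphere with three punctures, so $\pb_{n-3}(\Sigma_{0,3})$ embeds in the classical pure braid group $\pb_{n-1}$ of the disk. Concretely, it is the kernel of the map forgetting strands in $\pb_{n-1}(\Sigma_{0,2})$, and $\pb_{n-1}(\Sigma_{0,2})\le \pb_{n-1}$ via the Crisp description used in \cref{free:abelian:subgroup:cd}.

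Next I would show that every virtually abelian subgroup $A$ of $K$ is separable in $K$. Since $K$ is torsion-free and satisfies the Tits alternative, $A$ is free abelian of finite rank. The classical pure braid group $\pb_m$ is an iterated semidirect product of free groups (the almost-direct product structure coming from Fadell--Neuwirth, \cref{Fadell:Neuwirth}). I would then invoke the known fact that such groups are subgroup separable on abelian subgroups, or more robustly the fact that $\pb_m$ is a finite-index subgroup of a right-angled Artin group. Alternatively, one can use that $\pb_m$ virtually embeds in a right-angled Artin group, so that $\pb_m$ is virtually special. In virtually special groups, all quasiconvex subgroups are separable and, more generally, all abelian subgroups are separable (Haglund--Wise / Minasyan). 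Separability passes to subgroups: if $A\le K\le \pb_{m}$ and $A$ is closed in the profinite topology of $\pb_m$, then $A$ is closed in the induced topology on $K$, which is coarser than or equal to the profinite topology of $K$. Thus $A$ is separable in $K$, and \cref{lem:vab-finite-index} finishes the argument.

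The main obstacle is the input that abelian subgroups of the pure braid group are separable. The first approach I would try is the route through $\pb_m \cong \pb_m/Z\times Z$ and the identification of $\pb_m/Z$ with a finite-index subgroup of a mapping class group of a punctured sphere. From there I would cite separability of abelian (equivalently, by the Tits alternative, virtually abelian) subgroups in mapping class groups of punctured spheres, using that these are virtually special (genus zero case). If that citation is unavailable, the fallback is the direct argument for poly-free almost-direct products: induct on the Fadell--Neuwirth fibration using that free groups are LERF and that the action on the fibre is by IA-automorphisms. The goal of this induction is to separate elements from abelian subgroups lying either in the fibre or mapping to abelian subgroups of the base.
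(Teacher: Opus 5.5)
Your reductions are sound: passing to a torsion-free finite-index subgroup, applying \cref{lem:vab-finite-index}, and using that if $A\le K\le G$ and $A$ is closed in the profinite topology of $G$, then $A$ is closed in the profinite topology of $K$. The gap is in the one step that carries all the content, namely separability of abelian subgroups of the pure braid group $\pb_m$. None of the justifications you give for it hold.

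\begin{itemize}
\item $\pb_m$ is not a finite-index subgroup of a right-angled Artin group once $m\ge 4$. Finite-index subgroups of RAAGs act geometrically on CAT(0) cube complexes, while Haettel proved that braid groups on at least four strands are not virtually cocompactly cubulated.
\item The same obstruction shows that neither $\pb_m$ ($m\ge 4$) nor $\mcg(S^2;n)$ ($n\ge 5$) is virtually compact special. Note that $\mcg(S^2;n)$ contains $\pb_{n-1}/Z(\pb_{n-1})$ with finite index. No embedding of these groups into a RAAG is available, so neither Haglund--Wise nor Minasyan can be invoked.
\item The fallback induction along the Fadell--Neuwirth fibration is only an outline. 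Its real difficulty is not addressed: you must separate some $g\notin A$ with $p(g)\in p(A)$ from $A$, while simultaneously controlling the cyclic group $A\cap F$ in the fibre.
\end{itemize}

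The missing ingredient is a separability theorem proved directly in mapping class groups, without any cubulation. This is Leininger--McReynolds \cite[Theorem~1.1]{LM07}: every virtually abelian subgroup of $\mcg(S^2;n)$ is separable. Your ``first approach'' points toward this, but once you have it, the detour is unnecessary: you do not need $\pb_{n-3}(\Sigma_{0,3})$, the disk braid group, or the splitting $\pb_m\cong \pb_m/Z\times Z$. The splitting would in fact cost an extra argument, since abelian subgroups of $Q\times\Z$ can be graphs of homomorphisms $A_0\to\Z$ with $A_0\le Q$.

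The paper argues directly:
\begin{itemize}
\item A torsion-free finite-index subgroup $H\le B_n(S^2)$ meets the kernel $\Z_2$ of $B_n(S^2)\to\mcg(S^2;n)$ trivially.
\item Hence $H$ maps isomorphically onto a finite-index subgroup of $\mcg(S^2;n)$.
\item In that subgroup every virtually abelian subgroup is separable, by \cite{LM07} and \cref{lem:finite-index-separability}.
\item \cref{lem:vab-finite-index} then transfers this back to $B_n(S^2)$.
\end{itemize}

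Two minor slips. First, $\pb_{n-3}(\Sigma_{0,3})$ is the kernel of a forgetful map on $\pb_{n-2}(\Sigma_{0,2})$, not on $\pb_{n-1}(\Sigma_{0,2})$. Second, torsion-freeness plus the Tits alternative does not make a virtually abelian subgroup abelian; bi-orderability of $\pb_m$ would give this.
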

\begin{proof}
Consider the short exact sequence
\[
  1 \to \Z_2 \to B_n(S^2) \xrightarrow{p} \mcg(S^2;n) \to 1.
\]
Since $B_n(S^2)$ is virtually torsion-free, it has a torsion-free
subgroup $H$ of finite index, then $p|_H \colon H \xrightarrow{\;\sim\;} p(H)$ is an isomorphism.
By \cite[Theorem 1.1]{LM07}, every virtually abelian subgroup of $\mcg(S^2;n)$
is separable in $\mcg(S^2;n)$; since $p(H)$ has finite index in $ \mcg(S^2;n)$, it follows from \cref{lem:finite-index-separability} that every virtually abelian subgroup of $p(H)$ is separable in $p(H)$. As $H \cong p(H)$, every virtually abelian subgroup of
$H$ is separable in $H$. Since $[B_n(S^2):H] < \infty$,
\cref{lem:vab-finite-index} implies that every virtually abelian
subgroup of $B_n(S^2)$ is separable in $B_n(S^2)$.
\end{proof}

\begin{proposition}\label{sphere:braid:conditionC}
  $B_n(S^2)$ satisfies condition C.
\end{proposition}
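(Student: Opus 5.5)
The plan is to derive condition $C$ for $B_n(S^2)$ from the separability of its virtually abelian subgroups (\cref{separable:sphere:braid}). Fix a free abelian subgroup $H\le B_n(S^2)$ of rank $m$ and a finitely generated $K\subset N_G[H]$, where $G=B_n(S^2)$. Write $K=\langle k_1,\dots,k_s\rangle$ and set $\Gamma=\langle H,K\rangle$. The goal is a subgroup $H'$ commensurable with $H$ that is normalized by every $k_i$ and by $H$.

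The first step produces a candidate $H'$. Since each $k_i$ commensurates $H$, the subgroups $k_i^{\pm1}Hk_i^{\mp1}\cap H$ have finite index in $H$. Since $H\cong\mathbb Z^m$ is finitely generated, there are only finitely many subgroups of a given index. The standard trick is to take $H_0=\bigcap_i (k_iHk_i^{-1}\cap k_i^{-1}Hk_i\cap H)$, which has finite index in $H$, and then pass to a characteristic subgroup such as $H_0^{N}=\{h^{N}: h\in H\}$ with $N$ divisible by all the relevant indices. The hope is that conjugation by $k_i$ maps $H^N$ to $(k_iHk_i^{-1})^N$, and that this lies in $H$. But $k_iHk_i^{-1}$ need not be contained in $H$, only commensurable with it. So this step is where separability must be used.

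The key step uses separability to pass to a finite quotient. Consider the subgroup $\widehat H$ of $N_G[H]$ consisting of elements $g$ with $g^j\in H$ for some $j\ge1$, restricted to $\Gamma$. A cleaner route is the following. Let $V$ be the set of elements of $G$ having a power in $H$ that commute with $H$ virtually. I would show that the union $\bigcup_i k_iHk_i^{-1}$ together with $H$ generates a virtually abelian group $A$ commensurable with $H$. By separability of $A$ and of $H$ (\cref{separable:sphere:braid}), there is a finite index subgroup $G_1\le G$ with $A\cap G_1\le H$. Then $H'=$ the intersection of all $\Gamma$-conjugates of $A\cap G_1$ that lie in $A$. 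Since $A$ is finitely generated virtually abelian and $\Gamma$ acts on the finitely many subgroups of $A$ of index $[A:A\cap G_1]$, the subgroup $H'$ has finite index in $A$. It is therefore commensurable with $H$, and it is normalized by $\Gamma$ provided $\Gamma$ normalizes $A$.

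The main obstacle is proving that $A$, the subgroup generated by the finitely many commensurable conjugates of $H$, is itself virtually abelian, and arranging that $\Gamma$ normalizes it. For the first point I would use that $B_n(S^2)$ modulo its center $\Z_2$ is $\mcg(S^2;n)$. There, commensurable free abelian subgroups of a mapping class group have a common canonical reduction system and lie in the virtually abelian subgroup determined by it (as in \cite{Rita:Porfirio:Luis}), so the subgroup they generate is virtually abelian. Pulling back along the finite central extension preserves this. Alternatively I would avoid $A$ and work directly with $H$: if $\Gamma$ fails to normalize any finite-index $H'$, separability lets me detect this in a finite quotient of $G$ where $H$ maps onto a finite subgroup. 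Averaging over the finite $\Gamma$-orbit of subgroups commensurable with $H$ would then give the required $H'$. The finiteness of this orbit modulo commensurability-with-bounded-index is the point I expect to require the most care.
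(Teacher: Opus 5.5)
Your proposal has a genuine gap exactly where you locate the ``main obstacle'', and the route through $A$ cannot be repaired.

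\textbf{The claim about $A$ is false.} You assert that $H$ together with finitely many commensurable conjugates $k_iHk_i^{-1}$ generates a virtually abelian group commensurable with $H$. This fails already in $\mcg(S^2;6)$, and hence in $B_6(S^2)$, since lifting through the central $\mathbb{Z}_2$ changes nothing.
\begin{itemize}
\item Place three punctures symmetrically in each hemisphere. Let $c$ be the equator, $R$ the rotation of order $3$ about the polar axis, and $h=T_cR$.
\item Since $R(c)=c$, the rotation $R$ commutes with $T_c$, so $h^3=T_c^3$.
\item Let $u$ be the half twist $x=\sigma_1\sigma_2\sigma_1$ of the three northern punctures, supported in the northern disc. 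It commutes with $T_c$, so $(uhu^{-1})^3=T_c^3$ and $u\in N[\langle h\rangle]$.
\item Collapsing the southern disc to a marked point kills $T_c$. It sends $\langle h,uhu^{-1}\rangle$ onto $\langle y,xyx^{-1}\rangle\cong\mathbb{Z}/3*\mathbb{Z}/3$ inside $B_3/\langle\Delta^2\rangle=\langle x\rangle*\langle y\rangle\cong\mathbb{Z}/2*\mathbb{Z}/3$, where $y=\sigma_1\sigma_2$.
\end{itemize}
So $A$ is not virtually abelian. Condition $C$ does hold in this example, but via the small subgroup $H'=\langle h^3\rangle$ inside $H$, not via a large subgroup containing the conjugates. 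The reduction-system heuristic breaks down because the stabilizer of a reduction system is far from virtually abelian. Moreover, in the presence of torsion there is no canonical virtually abelian subgroup attached to $H$.

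\textbf{The argument is circular even granting $A$.} Suppose you had a virtually abelian $A$ commensurable with $H$ and normalized by $\Gamma=\langle H,K\rangle$. Then $A$ itself would already witness condition $C$, which only asks for some $H'$ commensurable with $H$. Your separability step would do no work: the whole content of the proposition has been moved into the unproved assertion about $A$.

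Your alternative ``averaging'' route hides the same crux. What you need is that the $\Gamma$-conjugates of $H$ meet $H$ with uniformly bounded index. By Bergman--Lenstra this is equivalent to $H$ being commensurable with a normal subgroup of $\Gamma$. Commensuration alone does not give it: in $\langle a,t\mid tat^{-1}=a^2\rangle$, the element $t$ commensurates $\langle a\rangle$, but $[\langle a\rangle:t^j\langle a\rangle t^{-j}]=2^j$ is unbounded.

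\textbf{The missing idea.} It is the theorem of Caprace--Kropholler--Reid--Wesolek \cite[Corollary~9]{CKRW20}: a separable commensurated subgroup of a group generated by finitely many of its cosets is virtually normal. The paper's proof is exactly this.
\begin{itemize}
\item $H$ is separable in $B_n(S^2)$ by \cref{separable:sphere:braid}, hence also in $\Gamma$.
\item $H$ is commensurated by $\Gamma$, which is generated by $H$ and the finitely many generators of $K$.
\item So there is a finite-index $H'\le H$ that is normal in $\Gamma$, i.e.\ $\Gamma\subseteq N(H')$.
\end{itemize}
That theorem is proved via the Schlichting completion and the structure theory of totally disconnected locally compact groups. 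A hands-on averaging argument would essentially have to reprove it.
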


\begin{proof}
Let $H$ be a virtually free abelian subgroup of $B_n(S^2)$ of rank $n$, and let $K\subseteq N[H]$ be a finitely generated subgroup. By \cref{separable:sphere:braid}, the subgroup $H$ is separable in $B_n(S^2)$. Now, by \cite[Corollary~9]{CKRW20}, there exists a finite-index subgroup
\(H'\leq H\) such that \(H'\) is normal in \(\langle H,K\rangle\). Therefore,
\[
\langle H,K\rangle \subseteq N_{B_n(S^2)}(H').
\]
\end{proof}

\begin{lemma}\label{abelinan:subgroup:sphere:comm:fg}
Let \(H\) be a virtually abelian subgroup of \(B_n(S^2)\). Then the commensurator \(N[H]\) is finitely generated.
\end{lemma}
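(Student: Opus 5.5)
We plan to transfer finite generation of commensurators from the mapping class group $\mcg(S^2;n)$ along the central extension
\[
  1 \to \Z_2 \to B_n(S^2) \xrightarrow{p} \mcg(S^2;n) \to 1
\]
of \cite[Section~2.4]{MR3382024}. For $n\le 2$ the group $B_n(S^2)$ is finite, so every subgroup is finitely generated, and we may assume $n\ge 3$. Let $H\le B_n(S^2)$ be virtually abelian, and set $\bar H=p(H)$; this is virtually abelian in $\mcg(S^2;n)$.

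The first step is to compare commensurators: we claim $N_{B_n(S^2)}[H]=p^{-1}\bigl(N_{\mcg(S^2;n)}[\bar H]\bigr)$. Since $\ker p$ is finite, $H$ and $p^{-1}(\bar H)=H\ker p$ are commensurable, so $N[H]=N[H\ker p]$. For $g\in B_n(S^2)$, the subgroup $gH\ker p\,g^{-1}$ is commensurable with $H\ker p$ if and only if their images $p(g)\bar H p(g)^{-1}$ and $\bar H$ are commensurable. This holds because taking preimages under a map with finite kernel preserves finite indices in both directions: indices of full preimages equal indices of the images. This proves the claim.

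The second step is finite generation in $\mcg(S^2;n)$. By \cite[Theorem 4.10]{Rita:Porfirio:Luis} (the same result used in \cref{commen:equal:norma}), there is a finite-index subgroup $\bar H'\le\bar H$ with $N_{\mcg(S^2;n)}[\bar H]=N_{\mcg(S^2;n)}(\bar H')$. Normalizers of virtually abelian subgroups of mapping class groups are finitely generated. One route: passing to a finite-index torsion-free subgroup of $\bar H'$ consisting of pure reducible mapping classes with a canonical reduction system $\sigma$, the normalizer is commensurable with a subgroup of the stabilizer of $\sigma$. That stabilizer is an extension of a product of mapping class groups of the complementary pieces by the free abelian group generated by the twists along $\sigma$, and the normalizer corresponds, up to finite index, to the preimage of a product of centralizers of pseudo-Anosov elements (virtually cyclic) and of whole mapping class groups of pieces. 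All of these are finitely generated, and finite extensions and finite-index subgroups preserve finite generation. Equivalently, one could cite the known result that centralizers and normalizers of abelian subgroups of $\mcg$ are finitely generated, e.g.\ as used in \cite{Rita:Porfirio:Luis}.

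The final step: $N_{B_n(S^2)}[H]$ is an extension of $\Z_2$ by the finitely generated group $N_{\mcg(S^2;n)}[\bar H]$, hence finitely generated. The main obstacle is the second step, namely justifying precisely that normalizers of virtually abelian subgroups of $\mcg(S^2;n)$ are finitely generated. I would first try to extract this directly from the structural description in \cite{Rita:Porfirio:Luis}, where the commensurator is shown to be (virtually) the stabilizer data of the canonical reduction system, and only fall back on the explicit Birman--Lubotzky--McCarthy decomposition sketched above if needed.
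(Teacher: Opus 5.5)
Your proposal is correct and follows essentially the same route as the paper. You pass to $\mcg(S^2;n)$ through the central extension with kernel $\langle\Delta^2\rangle\cong\Z_2$, match commensurators upstairs and downstairs, use finite generation of commensurators of virtually abelian subgroups of $\mcg(S^2;n)$, and lift through the finite kernel; your identification $N[H]=p^{-1}(N[\bar H])$ is a slightly sharper form of the paper's claim $\pi(N[\pi^{-1}(\bar H)])=N[\bar H]$.

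The only difference is how the middle fact is justified. The paper cites it directly: \cite[Corollary~4.6]{Rita:Porfirio:Luis} for rank $\ge 2$, and \cite[Proposition~4.2]{Rita:Porfirio:Luis} plus finite generation of centralizers \cite[Proposition~4.12]{MR3767218} for rank $1$. That is your citation fallback. Your reduction-system sketch via Theorem~4.10 would, as written, still need one more argument: that elements whose images centralize the restrictions of $A$ to the pieces have trivial commutators with $A$, not merely central twist-valued ones.
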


\begin{proof}
Consider the canonical short exact sequence
\[
1\longrightarrow \langle \Delta^2\rangle\cong \mathbb Z_2
\longrightarrow B_n(S^2)
\overset{\pi}{\longrightarrow}
\operatorname{Mod}(S^2,n)
\longrightarrow 1.
\]

Let
\[
\overline H:=\pi(H)\leq \operatorname{Mod}(S^2,n).
\]
Since \(\overline H\) is a quotient of the virtually abelian group \(H\), it is itself virtually abelian. Moreover,
\[
H\leq \pi^{-1}(\overline H),
\]
and, because \(\ker(\pi)=\langle \Delta^2\rangle\cong \mathbb Z_2\),
\[
[\pi^{-1}(\overline H):H]\leq 2.
\]
Hence \(H\) and \(\pi^{-1}(\overline H)\) are commensurable. Since commensurators depend only on the commensurability class, we obtain
\[
N[H]=N[\pi^{-1}(\overline H)].
\]

We claim that
\[
\pi\bigl(N[\pi^{-1}(\overline H)]\bigr)
=
N[\overline H].
\]

The inclusion
\[
\pi\bigl(N[\pi^{-1}(\overline H)]\bigr)
\subseteq
N[\overline H]
\]
is immediate.

For the reverse inclusion, let \(x\in N[\overline H]\), and choose
\(g\in B_n(S^2)\) such that \(\pi(g)=x\). Since
\[
[x\overline Hx^{-1}:x\overline Hx^{-1}\cap \overline H]<\infty
\]
and
\[
[\overline H:x\overline Hx^{-1}\cap \overline H]<\infty,
\]
it follows that
\[
[g\pi^{-1}(\overline H)g^{-1}:
g\pi^{-1}(\overline H)g^{-1}\cap \pi^{-1}(\overline H)]<\infty
\]
and
\[
[\pi^{-1}(\overline H):
g\pi^{-1}(\overline H)g^{-1}\cap \pi^{-1}(\overline H)]<\infty.
\]
Therefore
\[
g\in N[\pi^{-1}(\overline H)],
\]
and consequently
\[
x=\pi(g)\in
\pi\bigl(N[\pi^{-1}(\overline H)]\bigr).
\]
This proves the claim.

Set
\[
Q:=N[\overline H].
\]
The commensurator of every virtually abelian subgroup of rank $\geq 1$ in $\mcg(S^2,n)$ is finitely generated. For $n=1,2,3$ this is clear since $\mcg(S^2,n)$ is finite. For $n\geq 4$, the case of rank $\geq 2$ is \cite[Corollary~4.6]{Rita:Porfirio:Luis}, and the case of rank $1$ follows from \cite[Proposition~4.2]{Rita:Porfirio:Luis}, which identifies the commensurator with the normalizer, and \cite[Proposition~4.12]{MR3767218}, which gives finite generation of the centralizer; the latter suffices since the centralizer of a cyclic subgroup has finite index in its normalizer.

Hence \(Q\) is finitely generated. Since
\[
1\longrightarrow \langle \Delta^2\rangle
\longrightarrow N[H]
\overset{\pi}{\longrightarrow}
Q
\longrightarrow 1
\]
is exact, and \(\langle \Delta^2\rangle\) is finite, it follows that \(N[H]\) is finitely generated.
\end{proof}

\begin{proposition}\label{comm:realize:normalizer:pure:projec}
Let $\Sigma$ be either the sphere or the projective plane. For every virtually abelian subgroup $H$ of $B_n(\Sigma)$, there exists a subgroup $H'$ commensurable with $H$ such that
\[
N[H]=N[H']=N(H').
\]
\end{proposition}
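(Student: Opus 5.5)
The plan is to deduce the statement from \cref{comm:equal:normalizer}, which needs two inputs: condition $C$ for $B_n(\Sigma)$, and finite generation of $N[H]$.

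\textbf{Reduction to the free abelian case.} Let $H$ be virtually abelian. If $H$ is finite, then $H$ is commensurable with the trivial subgroup $H'=1$. In that case $N[H]=B_n(\Sigma)=N(1)$, so the statement holds. If $H$ is infinite, choose a finite index subgroup $H_0\le H$ with $H_0\cong\Z^k$. Since $N[H]=N[H_0]$ and commensurability is transitive, it suffices to treat $H_0$.

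\textbf{The sphere.} Here \cref{sphere:braid:conditionC} gives condition $C$ for $B_n(S^2)$, and \cref{abelinan:subgroup:sphere:comm:fg} gives that $N[H_0]$ is finitely generated. Then \cref{comm:equal:normalizer} yields $H'$ commensurable with $H_0$, hence with $H$, such that $N[H_0]=N(H')$. Since commensurable subgroups have the same commensurator, $N[H']=N[H_0]$. Therefore $N[H]=N[H']=N(H')$.

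\textbf{The projective plane.} I would run the same argument, replacing the short exact sequence for $S^2$ by
\[
1\to\Z_2\to B_n(\mathbb{R}P^2)\to\mcg(\mathbb{R}P^2;n)\to1
\]
from \cite[Section~2.4]{MR3382024}. Both inputs are then obtained as in the sphere case.

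\emph{Separability.} $B_n(\mathbb{R}P^2)$ is virtually torsion-free, so there is a torsion-free finite index subgroup mapping isomorphically into $\mcg(\mathbb{R}P^2;n)$. Separability of virtually abelian subgroups in mapping class groups of non-orientable punctured surfaces should follow from the analogue of \cite{LM07}. One can also use the orientation double cover: $\mcg(\mathbb{R}P^2;n)$ embeds in the mapping class group of $S^2$ with $2n$ punctures, and separability passes to subgroups when the ambient group is residually finite. The latter is a point I would have to check, since separability of a subgroup $H\le K\le G$ in $G$ implies separability in $K$. Then \cref{lem:vab-finite-index} transfers separability to $B_n(\mathbb{R}P^2)$, and \cite[Corollary~9]{CKRW20} gives condition $C$ exactly as in \cref{sphere:braid:conditionC}.

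\emph{Finite generation.} I would copy the proof of \cref{abelinan:subgroup:sphere:comm:fg}. It gives $\pi(N[H])=N[\pi(H)]$, with finite kernel, so it reduces to finite generation of commensurators of virtually abelian subgroups in $\mcg(\mathbb{R}P^2;n)$. For small $n$ this group is finite or virtually free, which is easy. For larger $n$ I would use \cite{Rita:Porfirio:Luis} and \cite{MR3767218}, which handle non-orientable surfaces, or else the double cover embedding.

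\textbf{Main obstacle.} I expect the hardest step to be justifying these mapping class group inputs for $\mathbb{R}P^2$: separability and finite generation of commensurators and centralizers. If the cited results only treat orientable surfaces, I would pull back along the embedding into $\mcg(S^2;2n)$ and use that commensurators of a subgroup intersect well. The rest of the argument is formal.
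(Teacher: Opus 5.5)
\textbf{Sphere case.} This matches the paper's argument, and your reduction from virtually abelian $H$ to a free abelian finite-index $H_0$ is fine.

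\textbf{Projective plane case: the gap.} You need both inputs of \cref{comm:equal:normalizer} for $B_n(\mathbb{R}P^2)$: condition $C$ and finite generation of commensurators. You establish neither.
\begin{enumerate}
\item You say \cite{Rita:Porfirio:Luis} and \cite{MR3767218} handle non-orientable surfaces, but this is unsupported. The introduction describes \cite{Rita:Porfirio:Luis} as treating $\mcg(S)$ for orientable $S$.
\item Your double-cover fallback can only recover separability. Separability in $G$ passes to any subgroup $K\ge H$: if a finite-index $M\le G$ separates $H$ from $k\in K\setminus H$, then $M\cap K$ does so in $K$. No residual finiteness is needed.
\item Finite generation does not pass to subgroups. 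For an infinite-index $K\le G$ one has $N_K[H]=K\cap N_G[H]$, and intersecting a finitely generated subgroup with $K$ need not give a finitely generated group. The inclusions $\mcg(\mathbb{R}P^2;n)\le \mcg(S^2;2n)$ and $B_n(\mathbb{R}P^2)\le B_{2n}(S^2)$ both have infinite index.
\end{enumerate}
So as written, the hypotheses of \cref{comm:equal:normalizer} are missing for $\mathbb{R}P^2$.

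\textbf{The missing idea.} Transfer the conclusion to the subgroup, not the hypotheses. This is exactly \cref{property:pass:subgroup}. If $N_G[H]=N_G(H')$ and $H\le K\le G$, then $N_K[H]=K\cap N_G(H')\subseteq N_K(H'\cap K)$. Since $H'\cap K$ is commensurable with $H$, this forces $N_K[H]=N_K[H'\cap K]=N_K(H'\cap K)$.

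Combine this with the embedding $B_n(\mathbb{R}P^2)\hookrightarrow B_{2n}(S^2)$ of \cite{MR2927811} and the sphere case you already proved. The projective plane case then follows at once, with no separability or finite-generation statement needed for $B_n(\mathbb{R}P^2)$ or $\mcg(\mathbb{R}P^2;n)$. This is how the paper argues. Your remark that commensurators behave well under passing to subgroups is exactly this mechanism; you applied it to the inputs rather than to the output.
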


\begin{proof}
There is an injective homomorphism (see, for example,
\cite{MR2927811})
\[
B_n(\mathbb{R}P^2)\longrightarrow B_{2n}(S^2).
\]
Hence, by \cref{property:pass:subgroup}, it is enough to prove the claim for $B_n(S^2)$. By \cref{comm:equal:normalizer}, it suffices to show that $B_n(S^2)$ satisfies condition C and that the commensurator of every virtually abelian subgroup of $B_n(S^2)$ is finitely generated. These two facts were established in \cref{sphere:braid:conditionC} and \cref{abelinan:subgroup:sphere:comm:fg}, respectively.
\end{proof}

\begin{proposition}\label{proper:dim:weyl:pure:projec}
Let $n\ge 1$ and let $H<PB_n(\mathbb{R}P^2)$ be a virtually abelian subgroup of rank $r=\rank(H)\ge 1$. Then
\[
\gdfin(W(H)) \;\le\; \vcd\big(PB_n(\mathbb{R}P^2)\big)+r.
\]
\end{proposition}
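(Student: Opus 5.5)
The plan is to pass from $W(H)=N(H)/H$ to a proper action built from a proper model for $N(H)$ itself, trading the quotient by $H$ for at most $r$ extra dimensions. By \cref{vcd:gdfin:pure:projec} we have $\gdfin(\pb_n(\mathbb{R}P^2))=\vcd(\pb_n(\mathbb{R}P^2))$. Bredon cohomological dimension for finite subgroups is monotone under passage to subgroups, so $\cd_{\calF_0}(N(H))\le \vcd(\pb_n(\mathbb{R}P^2))$. By \cite[Theorem 0.1]{LM00} it then suffices to work with $\cd_{\calF_0}$ throughout and convert to $\gdfin$ only at the end; when the resulting bound is $\le 2$ we argue separately, using the small cases $n\le 4$ treated in \cref{vcd:gdfin:pure:projec}. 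So the target is $\cd_{\calF_0}(W(H))\le \cd_{\calF_0}(N(H))+r$.

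\textbf{Step 1 (replace $H$ by a nice normal lattice).} Choose $H'\le H$ free abelian of rank $r$ and of finite index in $H$. Replace it by the subgroup generated by $m$-th powers of a normal free abelian finite-index subgroup of $H$, for suitable $m$; this subgroup is characteristic in $H$ and hence normal in $N(H)$. Since $H/H'$ is finite, the projection $N(H)/H'\to W(H)$ has finite kernel. Restricting to proper actions along this projection, as in the proof of \cref{upper:bound:gdf_n:ses} with $n=0$, any proper $N(H)/H'$-model pulls back. We therefore reduce to bounding the proper dimension of $W':=N(H)/H'$, which is an extension of the finite group $H/H'$ by $W(H)$. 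In fact we use the converse direction: a proper model for $W'$ is a proper model for $W(H)$ after taking the quotient by the finite normal subgroup $H/H'$, which does not raise dimension.

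\textbf{Step 2 (the key construction).} Let $X$ be a model for $E_{\calF_0}N(H)$ of dimension $\cd_{\calF_0}(N(H))$; for larger values this is guaranteed by \cite{LM00}. The free abelian group $H'\cong\Z^r$ acts freely on $X$. A point stabilizer for the induced $W'$-action on $X/H'$ is contained in (finite stabilizer of $N(H)$)$\cdot H'/H'$, which is finite. Thus $W'$ acts properly on $X/H'$, which has the homotopy type of $T^r$. To kill this torus, I would use the $N(H)$-space $X\times \mathbb{R}^r$, where $\mathbb{R}^r=H'\otimes\mathbb{R}$. Here $H'$ acts diagonally by deck translations, and $N(H)$ acts on $\mathbb{R}^r$ through its conjugation action on $H'$, which is affine via a finite-index subgroup of $\mathrm{GL}_r(\Z)$. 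An alternative is an equivariant version of the Borel-type argument of \cref{exten:gd:inequality}. Either way, the goal is to produce a contractible proper $W'$-space of dimension $\dim X+r$. Concretely, I would show that $C_*(X)$, viewed as a projective Bredon resolution over the orbit category, descends to a resolution for $W'$ after tensoring with the cellular chains of $\mathbb{R}^r$ relative to the $H'$-lattice. This should give $\cd_{\calF_0}(W')\le \cd_{\calF_0}(N(H))+r$.

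\textbf{Step 3 (conclude).} Combining the steps gives
\[
\gdfin(W(H))\le \max\{\cd_{\calF_0}(N(H))+r,\,3\}\le \vcd(\pb_n(\mathbb{R}P^2))+r .
\]
When $\vcd(\pb_n(\mathbb{R}P^2))+r\le 2$, the group $\pb_n(\mathbb{R}P^2)$ is virtually free or finite ($n\le 3$), and $W(H)$ is then finite or virtually free, so the bound is immediate.

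The main obstacle is Step 2. One must check that the combined $W'$-action on the product-type space is genuinely proper and cellular; the action of $N(H)$ on $H'\otimes\mathbb{R}$ has finite image only after passing to a further finite-index subgroup. If this fails, the fallback is to restrict to a torsion-free finite-index subgroup of $N(H)$, apply the non-equivariant version (where $X/H'\times$ universal cover arguments are classical), and then use \cite[Theorem 3.3]{AMP14}-type criteria to pass back to $W(H)$. This would use \cref{vcd:centra:finite:0:projec} to control Weyl groups of finite subgroups.
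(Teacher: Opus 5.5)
Your proposal has a genuine gap. Step 2 carries the whole argument and is not established. What you need there is a general inequality $\cd_{\calF_0}(N/A)\le \cd_{\calF_0}(N)+r$ for an arbitrary normal subgroup $A\cong\Z^r$. This is not a standard fact, since proper dimension does not pass to quotients in general. \cref{exten:gd:inequality} bounds the \emph{middle} term of an extension, not its quotient, which is why results such as \cref{luck:condition:C} take a bound on $\gdfin(W_G(H))$ as a separate hypothesis. The constructions you sketch do not supply this inequality:
\begin{enumerate}
\item $H'$ acts freely on the contractible space $X\times\mathbb{R}^r$, so the orbit space is again a $K(\Z^r,1)$ and nothing is killed. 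Any contractible $W'$-space obtained as an $H'$-orbit space must come from an $N(H)$-space in which $H'$ lies in every isotropy group. That is a model for $E_{\calG}N(H)$, with $\calG$ generated by subgroups commensurable with $H$, and bounding its dimension is exactly the original problem.
\item An affine action of $N(H)$ on $H'\otimes\mathbb{R}$, with $H'$ acting by translations and linear part given by conjugation, need not exist. The obstruction is the image of the class of $1\to H'\to N(H)\to W'\to 1$ in $H^2(W';H'\otimes\mathbb{R})$. For the integral Heisenberg group with $H'$ its center, such an action would be a homomorphism to $\mathbb{R}$ that is nonzero on the commutator $z=[x,y]$, which is impossible.
\item At the chain level, $C_*(X)\otimes_{\Z H'}C_*(\mathbb{R}^r)\simeq C_*(X/H')$ still has the homology of $T^r$, so it is not a resolution.
\end{enumerate}

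The fallback does not close the gap either. The torsion-free argument bounds only $\vcd(W')$, and passing from $\vcd$ to $\gdfin$ is precisely the difficulty. Moreover, \cref{vcd:centra:finite:0:projec} concerns finite subgroups of $\pmcg(\mathbb{R}P^2;n)$. Finite subgroups of $W(H)$ instead lift to virtually $\Z^r$ subgroups of $N(H)$ containing $H$, so that lemma says nothing about their Weyl groups.

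The missing idea is to avoid bounding a quotient directly. The paper argues by induction on $n$ via the Fadell--Neuwirth sequence $1\to F\to PB_{n+1}(\mathbb{R}P^2)\xrightarrow{p}PB_n(\mathbb{R}P^2)\to 1$, with $F$ free. Restricting to $N(H)$ and dividing by $H$ exhibits $W(H)$ as an extension. Its kernel is $(F\cap N(H))/(F\cap H)$, which is finite when $F\cap H\cong\Z$ (normalizers of cyclic subgroups of free groups are cyclic) and free when $F\cap H=1$. Its quotient is $p(N(H))/p(H)\le W(p(H))$. The bound then follows from \cref{exten:gd:inequality}, which goes in the available direction, together with the induction hypothesis (or \cref{vcd:centra:finite:0:projec} when $p(H)$ is finite) and the base cases $n\le 3$.

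Two minor points in Step 1. First, $mA$ is characteristic in $H$ only if $A$ is; use a verbal subgroup $\langle h^e : h\in H\rangle$ instead. Second, for a finite normal subgroup $P$ of $W'$, the model for $E_{\calF_0}(W'/P)$ is the fixed set $Y^P$, not the orbit space $Y/P$.
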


\begin{proof}
We proceed by induction on $n$.

\medskip
\noindent\textbf{Base cases $n=1,2$.}
The group $PB_n(\mathbb{R}P^2)$ is finite, so $W(H)$ is finite and $\vcd(PB_n(\mathbb{R}P^2))=0$. Then $\gdfin(W(H))=0\le 0+r$, and the claim is clear.

\medskip
\noindent\textbf{Base case $n=3$.}
The group $PB_3(\mathbb{R} P^2)$ is virtually free, so $\vcd(PB_3(\mathbb{R}P^2))=1$. Since every virtually abelian subgroup of a virtually free group is virtually cyclic, $N(H)$ is virtually $\mathbb{Z}$ (or finite), and hence $W(H)=N(H)/H$ is finite. Then $\gdfin(W(H))=0\le 1+r$, and the claim follows.

\medskip
\noindent\textbf{Inductive step.}
Let $n\ge 3$ and assume the result holds for $PB_n(\mathbb{R}P^2)$. Let $H<PB_{n+1}(\mathbb{R}P^2)$ be virtually abelian of rank $r=\rank(H)\geq1$. Consider the short exact sequence of Fadell-Neuwirth (\cite{Fadell1962}, see also \cite{GG2010}):
\[
1 \longrightarrow \pi_1(\mathbb{R}P^2-Q) \longrightarrow PB_{n+1}(\mathbb{R}P^2) \xrightarrow{\;p\;} PB_n(\mathbb{R}P^2) \longrightarrow 1, \tag{$*$}
\]
where $Q\subset\mathbb{R}P^2$ is a set of $n$ points and $\pi_1(\mathbb{R}P^2-Q)$ is free.

We distinguish two cases according to whether $p(H)$ is finite or infinite.

\medskip
\noindent\emph{Case 1: $p(H)$ is finite.}
Restricting $(*)$ to $N(H)$ we obtain
\[
1\to \pi_1(\mathbb{R}P^2-Q)\cap N(H) \to N(H) \xrightarrow{\;p\;} L \to 1, \qquad L:=p(N(H))\le N(p(H)).
\]

Passing to the quotient by $H$ yields
\begin{equation}\label{s.e.s:254}
1\to \dfrac{\pi_1(\mathbb{R}P^2-Q)\cap N(H)}{\pi_1(\mathbb{R}P^2-Q)\cap H} \to W(H) \to \dfrac{L}{p(H)} \to 1,
\end{equation}

Set $F=\pi_1(\mathbb{R}P^2-Q)$. We claim that the kernel
\[
K:=\dfrac{F\cap N(H)}{F\cap H}
\]
is finite. Since $p(H)$ is finite, $[H:F\cap H]=|p(H)|<\infty$; and as $H$ is infinite (being virtually abelian of rank $r\ge 1$), so is $F\cap H$. Now $F\cap H$ is free, being a subgroup of the free group $F$, and virtually abelian, being a subgroup of $H$; an infinite virtually abelian free group is infinite cyclic, so $F\cap H\cong\mathbb{Z}$.

Since $F\trianglelefteq PB_{n+1}(\mathbb{R}P^2)$, every $g\in N(H)$ satisfies
$g(F\cap H)g^{-1}=gFg^{-1}\cap gHg^{-1}=F\cap H$, whence $N(H)\le N(F\cap H)$ and therefore
\[
F\cap N(H)\ \le\ N_F(F\cap H).
\]
In a free group, the normalizer of a nontrivial cyclic subgroup coincides with its centralizer, which is infinite cyclic; hence $N_F(F\cap H)\cong\mathbb{Z}$ and contains $F\cap H$ as a finite-index subgroup. Consequently $K$ is a subgroup of the finite group $N_F(F\cap H)/(F\cap H)$, and so it is finite.

Since every extension of a finite group is finite, we can apply \cref{exten:gd:inequality} to \cref{s.e.s:254} to obtain
\[
\begin{split}
\gdfin(W(H)) 
&\le 0 + \gdfin(\dfrac{L}{p(H)}) \\
&\le \gdfin(\dfrac{N(p(H))}{p(H)}) \\
&= \gdfin(Wp(H)) \\
&= 0,
\end{split}
\]
where the last inequality is \cref{vcd:centra:finite:0:projec}. This proves the claim in this case, since $0\le \vcd(PB_{n+1}(\mathbb{R}P^2))+r$.

\medskip
\noindent\emph{Case 2: $p(H)$ is infinite.}
As before, restricting $(*)$ to $N(H)$ and passing to the quotient by $H$ we obtain
\[
1\to K \to W(H) \to \dfrac{L}{p(H)} \to 1, \qquad
K:=\dfrac{\pi_1(\mathbb{R}P^2-Q)\cap N(H)}{\pi_1(\mathbb{R}P^2-Q)\cap H}, \quad L=p(N(H))\le N(p(H)).
\]
We distinguish according to $r=\rank(H)$.

\smallskip
\noindent\textit{Subcase $r=1$.}
Since $p(H)$ is infinite and $H$ is virtually $\mathbb{Z}$, the subgroup $\pi_1(\mathbb{R}P^2-Q)\cap H$ is finite; being a subgroup of a free group, it is trivial. 
\[
\begin{split}
\gdfin(W(H))
&\le \gdfin(K)+\gdfin(\dfrac{L}{p(H)}) \\
&\le 0+ \gdfin(\dfrac{N(p(H))}{p(H)}) \\
&= \gdfin(W(p(H))) \\
&\le \vcd(PB_n(\mathbb{R}P^2)) \\
&\leq\vcd(PB_{n+1}(\mathbb{R}P^2)) \\
&\leq \vcd(PB_{n+1}(\mathbb{R}P^2))+r.
\end{split}
\]

\smallskip
\noindent\textit{Subcase $r\ge 2$.}
In this case $K$ is finite, so  $\gdfin{K}=0$. Using \cref{exten:gd:inequality}, the induction hypothesis, and $\rank(p(H))=\rank(H)=r$, together with $\vcd(PB_n(\mathbb{R}P^2))\le\vcd(PB_{n+1}(\mathbb{R}P^2))$, we obtain
\[
\begin{split}
\gdfin(W(H))
&\le \gdfin(K)+\gdfin{\dfrac{N(p(H))}{p(H)}} \\
&\le 0+\gdfin{W{p(H)}} \\
&\le \vcd(PB_n(\mathbb{R}P^2))+\rank(p(H)) \\
&\le \vcd(PB_{n+1}(\mathbb{R}P^2))+\rank(H) \\
&= \vcd(PB_{n+1}(\mathbb{R}P^2))+r.
\end{split}
\]

In both subcases of Case 2 we obtain $\gdfin{W{H}}\le \vcd(PB_{n+1}(\mathbb{R}P^2))+r$, which together with Case 1 completes the inductive step, and hence the proof.
\end{proof}

\begin{theorem}[The upper bound]\label{upper:bound:proyective}
For every $r \in \mathbb{N}$, we have
\[
  \gd_{\mathcal{F}_r}(\pb_n(\mathbb{R}P^2)) \leq \vcd(\pb_n(\mathbb{R}P^2)) + r.
\]
\end{theorem}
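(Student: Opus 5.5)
The plan is to mirror the proof of \cref{upper:bound:gd:fn}, by induction on $r$, with $G=\pb_n(\mathbb{R}P^2)$. For $r=0$ the claim is exactly \cref{vcd:gdfin:pure:projec}: $\gd_{\calF_0}(G)=\vcd(G)$.

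For the inductive step, assume $\gd_{\calF_{k-1}}(G)\le \vcd(G)+k-1$. By \cref{lw:gd:upper:bound} it suffices to show that
\[
\gd_{\calF_k[L]}(N_G[L])\le \vcd(G)+k
\]
for each representative $L$ of a commensurability class in $\calF_k-\calF_{k-1}$.

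To replace commensurators by normalizers, I would use \cref{comm:realize:normalizer:pure:projec}, stated for $B_n(\mathbb{R}P^2)$, together with \cref{property:pass:subgroup} applied to the subgroup $\pb_n(\mathbb{R}P^2)\le B_n(\mathbb{R}P^2)$. This gives $H$ commensurable with $L$ such that $N_G[L]=N_G(H)$. I would then split $\calF_k[H]=\calG\cup(\calF_{k-1}\cap N_G(H))$ exactly as before, with $\calG$ generated by the members of the class $[H]$, and apply \cref{upper:bound:gd:two:families}.

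This reduces the step to two bounds:
\begin{enumerate}[i)]
\item $\gd_{\calG}(N_G(H))\le \vcd(G)-k$, or at least a bound that still yields $\vcd(G)+k$ in the end;
\item $\gd_{\calG\cap\calF_{k-1}}(N_G(H))\le \vcd(G)+k-1$.
\end{enumerate}
For i), a model for $E_{\calF_0}W_G(H)$ is a model for $E_{\calG}N_G(H)$. For ii), I would combine \cref{nested:families:lw} with the bound $\gd_{\calF_{k-1}\cap K}(K)\le 2k-1$ for virtually $\Z^k$ groups $K$ (from \cite[Proposition 1.3]{tomasz}). This verbatim argument needs $\gdfin(W_G(H))\le \vcd(G)-k$.

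The main obstacle is that \cref{proper:dim:weyl:pure:projec} as stated only gives $\gdfin(W(H))\le\vcd(G)+r$, which is far too weak. I would first try to revisit its proof and check that it actually yields the sharper bound $\vcd(G)-r$, in analogy with \cref{upper:bound:weil}. The cases where $p(H)$ is finite give $0$. In the case $r=1$ with $p(H)$ infinite, I expect an induction giving $\vcd(\pb_n)-1=\vcd(\pb_{n+1})-2$. The delicate point is the case $r\ge 2$ with $p(H)$ infinite. There, either $F\cap H\cong\Z$, so $\rank p(H)=r-1$ and the induction gives $\vcd(\pb_n)-(r-1)=\vcd(\pb_{n+1})-r$; or $F\cap H=1$, with rank preserved. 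In the latter case one must use that the kernel $K$ is then (virtually) free of $\gdfin\le 1$, which gives $1+\vcd(\pb_n)-r=\vcd(\pb_{n+1})-r$, using $\vcd(\pb_{n+1}(\mathbb{R}P^2))=\vcd(\pb_n(\mathbb{R}P^2))+1$ for $n\ge3$ from \cref{vcd:surfaces:braid:groups:0}.

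If this sharpening fails, the fallback is a cruder route: bound $\gd_{\calF_k}$ via the finite-kernel sequence $1\to\Z_2\to\pb_n(\mathbb{R}P^2)\to\pmcg(\mathbb{R}P^2;n)\to1$ and \cref{upper:bound:gdf_n:ses}, reducing to the mapping class group bound of \cite[Theorem~1.5]{Rita:Porfirio:Luis} applied to the finite-index subgroup $\pmcg(\mathbb{R}P^2;n)$. One would then need to check that such bounds pass to this finite-index subgroup with the same $\vcd$.
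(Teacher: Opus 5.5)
Your outline is the paper's: induction on $r$ from \cref{vcd:gdfin:pure:projec}, the L\"uck--Weiermann step, replacing $N_G[L]$ by a normalizer $N_G(H)$, the splitting $\calF_k[H]=\calG\cup(\calF_{k-1}\cap N_G(H))$, bound (i) via $E_{\calF_0}W_G(H)$, and bound (ii) via \cref{nested:families:lw} with the $2k-1$ estimate for virtually $\Z^k$ groups. Your passage through $B_n(\mathbb{R}P^2)$ and \cref{property:pass:subgroup} makes explicit a step the paper leaves implicit. The real difference is the Weyl-group input, and your diagnosis is right. The paper uses \cref{proper:dim:weyl:pure:projec}, so it only gets $\gd_{\calG}(N_G(H))\le\vcd(G)+k$, and then calls (ii) ``analogous'' to \cref{upper:bound:gd:fn}. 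But with that input the nested-families estimate gives only $\vcd(G)+3k-1$ for (ii), so the paper's argument does not close as written. Your sharpened bound $\gdfin(W_G(H))\le\vcd(G)-\rank(H)$ is exactly what is needed, and the Fadell--Neuwirth induction proves it. Write $\pb_m$ for $\pb_m(\mathbb{R}P^2)$. The base case is $m=3$, where $\pb_3$ is virtually free and $W(H)$ is finite. The step uses $\gdfin(\pb_n)=\vcd(\pb_n)=\vcd(\pb_{n+1})-1$ for $n\ge3$, and the fact that the kernel $K$ is finite or free.

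Two claims in your sketch of that induction are false as stated, though the corrected versions still reach the target.

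\emph{The case $p(H)$ finite.} Here $r=1$, but $\gdfin(W(H))$ need not be $0$. Take $H\le F$ generated by the full twist of the last two strands. A maximal-rank free abelian subgroup built as in \cref{free:abelian:subgroup:sphere}, with innermost curve around those two strands, centralizes $H$, so $W(H)$ contains $\Z^{n-2}$. What does hold, with $L=p(N(H))$ and taking $p(H)$-fixed points in a model for $E_{\calF_0}L$, is $\gdfin(W(H))\le\gdfin(L/p(H))\le\gdfin(L)\le\gdfin(\pb_n)=\vcd(\pb_{n+1})-1$. That is exactly $\vcd-r$.

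\emph{The case $r=1$, $p(H)$ infinite.} Here $F\cap H=1$, hence $K=F\cap N(H)=F\cap C(H)$, since $[f,h]\in F\cap H$. This free group is in general nontrivial: point-pushes of the last strand away from the support of $H$ lie in it. So you get $1+(\vcd(\pb_n)-1)=\vcd(\pb_{n+1})-1$, not $\vcd(\pb_{n+1})-2$, which is again exactly the target. You already handle this correctly for $r\ge2$ with $F\cap H=1$, where the paper wrongly asserts that $K$ is finite.

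Finally, drop the fallback. The mapping class group bound of \cite[Theorem~1.5]{Rita:Porfirio:Luis} is quoted in the paper only for orientable surfaces and used only for $S^2$, so it is not available for $\pmcg(\mathbb{R}P^2;n)$.
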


\begin{proof}
Denote $G = \pb_n(\mathbb{R}P^2)$. We proceed by induction on $r$. For $r = 0$, the inequality $\gd_{\mathcal{F}_0}(G) \leq \vcd(G)$ holds by \cref{vcd:gdfin:pure:projec}.

Suppose the statement holds for all $r \leq k$, and let us prove the inequality for $r = k+1$.
Define an equivalence relation on $\mathcal{F}_{k+1}- \mathcal{F}_k$ by commensurability,
and let $I$ be a complete set of representatives of conjugacy classes of
$\mathcal{F}_{k+1}- \mathcal{F}_k / {\sim}$ up to conjugation.

By \cref{Luck:weiermann}, the following homotopy $G$-push-out gives a model for $E_{\mathcal{F}_{k+1}}G$:
\[
\begin{tikzcd}
  \displaystyle\coprod_{H \in I} G \times_{N_G[H]} E_{\mathcal{F}_k}N_G[H]
    \arrow[r] \arrow[d] &
  E_{\mathcal{F}_k}G \arrow[d] \\
  \displaystyle\coprod_{H \in I} G \times_{N_G[H]} E_{\mathcal{F}_{k+1}[H]}N_G[H]
    \arrow[r] &
  E_{\mathcal{F}_{k+1}}G
\end{tikzcd}
\]
From this push-out we deduce
\begin{align*}
  \gd_{\mathcal{F}_{k+1}}(G)
    &\leq \max\!\left\{
        \gd_{\mathcal{F}_k}(G) + 1,\;
        \max_{H \in I} \gd_{\mathcal{F}_{k+1}[H]}\!\left(N_G[H]\right)
      \right\} \\
    &\leq \max\!\left\{
        \vcd(G) + k + 1,\;
        \max_{H \in I} \gd_{\mathcal{F}_{k+1}[H]}\!\left(N_G[H]\right)
      \right\},
\end{align*}
where the second inequality follows from the induction hypothesis.
Thus, to complete the inductive step it suffices to show that, for every $H \in I$,
\[
  \gd_{\mathcal{F}_{k+1}[H]}(N_G(H)) \leq \vcd(G) + k + 1.
\]

In \cref{comm:realize:normalizer:pure:projec} we show that for every $H \in I$ there exists $H' \leq G$ such that
$H'$ is commensurable with $H$ and $N[H]=N_G(H')$.
Making this substitution, the inequality above becomes
\[
  \gd_{\mathcal{F}_k[H]}(N_G(H)) \leq \vcd(G) + k + 1.
\]
We may write the family 
\[
  \mathcal{F}_k[H]
    = \bigl\{ K \leq N_G(H) \mid K \in \mathcal{F}_{k+1}- \mathcal{F}_k,\; K \sim H \bigr\}
      \cup \bigl(\mathcal{F}_k \cap N_G(H)\bigr),
\]
 as the union of two families
\[
  \mathcal{F}_k[H] = \mathcal{G} \cup \bigl(\mathcal{F}_{k-1} \cap N_G(H)\bigr),
\]
where $\mathcal{G}$ is generated by
\[
  \bigl\{ K \leq N_G(H) \mid K \in \mathcal{F}_k- \mathcal{F}_{k-1},\;
  [K] = [H] \bigr\}.
\]

By \cref{upper:bound:gd:two:families} we obtain
\begin{align*}
  \gd_{\mathcal{F}_k[H]}(N_G(H))
    &\leq \max\!\left\{
        \gd_{\mathcal{F}_k}(N_G(H)),\;
        \gd_{\mathcal{G} \cap (\mathcal{F}_k \cap N_G(H))}(N_G(H)) + 1,\;
        \gd_{\mathcal{G}}(N_G(H))
      \right\} \\
    &\leq \max\!\left\{
        \vcd(G) + k - 1,\;
        \gd_{\mathcal{G} \cap (\mathcal{F}_k \cap N_G(H))}(N_G(H)) + 1,\;
        \gd_{\mathcal{G}}(N_G(H))
      \right\},
\end{align*}
where the second inequality follows from the induction hypothesis.
We claim the following inequalities hold:
\begin{enumerate}[i)]
  \item $\gd_{\mathcal{G}}(N_G(H)) \leq \vcd(G) + k$,
  \item $\gd_{\mathcal{G} \cap (\mathcal{F}_k \cap N_G(H))}(N_G(H)) \leq \vcd(G) + k - 1$.
\end{enumerate}
Once these are established, we conclude
\[
  \gd_{\mathcal{F}_k[H]}(N_G(H)) \leq \vcd(G) + k + 1.
\]

For (i), we note that a model for $E_{\mathcal{F}_0}W(H)$ is a model for $E_{\mathcal{G}}(N_G(H))$
via the projection $N_G(H) \to W(H)$; the inequality therefore follows from \cref{proper:dim:weyl:pure:projec}.
The proof of (ii) is analogous to that of \cref{upper:bound:gd:fn}.
\end{proof}

\begin{theorem}
For every $r \in \mathbb{N}$, we have
\[
  \gd_{\mathcal{F}_r}(\pb_n(\mathbb{R}P^2)) = \vcd(\pb_n(\mathbb{R}P^2)) + r.
\]
\end{theorem}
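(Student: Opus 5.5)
The equality splits into two inequalities, and both are essentially assembled from results already established for $\pb_n(\mathbb{R}P^2)$. Set $G=\pb_n(\mathbb{R}P^2)$. The upper bound $\gd_{\calF_r}(G)\le \vcd(G)+r$ is exactly \cref{upper:bound:proyective}, so there is nothing further to do there. All the work is in the lower bound $\gd_{\calF_r}(G)\ge \vcd(G)+r$, which I would obtain the same way as for the sphere in \cref{abelian:dim:sphere}: exhibit a free abelian subgroup of maximal rank and apply the known value of the $\calF_r$-dimension of $\Z^m$.

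\textbf{Lower bound, main case.} First treat small $n$ separately. For $n=1,2$ the group $G$ is finite and $\vcd(G)=0$. In that case $\calF_r$ contains all subgroups, so a point is a model and $\gd_{\calF_r}(G)=0$.

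\emph{This is the main obstacle.} For $r\ge1$ the stated equality would then read $0=r$, so the formula cannot hold literally for $n\le2$. I expect the intended statement to assume $n\ge3$, or equivalently to be read with $\vcd(G)\ge1$. I would record this restriction explicitly, following the convention of \cref{vcd:surfaces:braid:groups:0}, where the value $n-2$ is only claimed for $n\ge 3$.

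Now take $n\ge3$. By \cref{free:abelian:subgroup:sphere}, $G$ contains a free abelian subgroup $A\cong\Z^{m}$ with $m=\vcd(B_n(\mathbb{R}P^2))=n-2$. Since $\pb_n(\mathbb{R}P^2)$ has finite index in $B_n(\mathbb{R}P^2)$, this $m$ equals $\vcd(G)$. Next I would use monotonicity under passage to subgroups: restricting a model for $E_{\calF_r}G$ to $A$ gives a model for $E_{\calF_r\cap A}A$, and $\calF_r\cap A$ is precisely the family $\calF_r$ of $A$. Hence
\[
\gd_{\calF_r}(G)\ \ge\ \gd_{\calF_r}(A).
\]

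\textbf{Evaluating $\gd_{\calF_r}(A)$.} When $r<m$, \cref{Virtually:dimension:Z} gives $\gd_{\calF_r}(A)=m+r$, which is the bound we want. When $r\ge m$, $A$ itself lies in $\calF_r$, so that lemma gives nothing useful. In this range I would instead use the abelian subgroups of rank larger than $\vcd(G)$, if any exist; otherwise I would argue differently. Every abelian subgroup of $G$ has rank at most $m$, because $\vcd$ is monotone on subgroups. Then $\calF_r=\calF_m$ for all $r\ge m$, and the upper bound from \cref{upper:bound:proyective} caps the dimension at $\vcd(G)+m$, which conflicts with the formula $\vcd(G)+r$ for $r>m$.

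So the statement should be understood for $0\le r<\vcd(G)$, exactly as in \cref{Virtually:dimension:Z}. I would also check the borderline case $r=m$: there $\gd_{\calF_m}(A)=0$, so the lower bound must come from $\calF_{m-1}\subseteq\calF_m$ instead. This range caveat, together with the small-$n$ caveat above, is the delicate point. The substantive content, the upper bound via \cref{upper:bound:proyective} and the lower bound for $r<\vcd(G)$ via \cref{free:abelian:subgroup:sphere} and \cref{Virtually:dimension:Z}, is routine once these are in place.
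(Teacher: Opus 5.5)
Your argument is the paper's argument. The upper bound is quoted from \cref{upper:bound:proyective}, and the lower bound comes from the rank-$(n-2)$ free abelian subgroup of \cref{free:abelian:subgroup:sphere}, combined with \cref{Virtually:dimension:Z} and restriction to subgroups.

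Your caveats are correct, and they expose a real defect in the paper's one-line proof. That proof invokes \cref{Virtually:dimension:Z} for $\Z^{\vcd(G)}$ and every $r$, although that lemma requires $r<\vcd(G)$. The statement as written is false outside that range:
\begin{itemize}
\item For $n\le 2$ the group is finite, so $\gd_{\calF_r}(G)=0\neq 0+r$ for $r\ge 1$.
\item For $n\ge 3$ every virtually abelian subgroup has rank at most $m=n-2$, so $\calF_m=\calF_{m+1}$. The formula would then give $\gd_{\calF_m}(G)=2m$ and $\gd_{\calF_{m+1}}(G)=2m+1$ for the same family. This does not even use the upper bound.
\end{itemize}
What you have actually established is the equality for $n\ge 3$ and $0\le r<n-2$, granting \cref{upper:bound:proyective}.

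The one step you should withdraw is the remark on the borderline $r=m$. A lower bound for $\gd_{\calF_m}(G)$ cannot be extracted from $\calF_{m-1}\subseteq\calF_m$. Geometric dimension is not monotone under enlarging the family: already $\gd_{\calF_{m-1}}(\Z^m)=2m-1$ while $\gd_{\calF_m}(\Z^m)=0$. Also, \cref{nested:families:lw} only gives $\gd_{\calF_{m-1}}(G)\le\gd_{\calF_m}(G)+(2m-1)$, which is vacuous here. So at $r=\vcd(G)$ neither you nor the paper proves the lower bound. That case needs a different input, such as a subgroup other than $\Z^m$ whose $\calF_m$-dimension is already $2m$, or a Bredon cohomology computation. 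For $n=3$ the value $\gd_{\calF_1}(G)=2$ is right, because a finitely generated virtually free group that is not virtually cyclic has $\gd_{\calF_1}=2$; but this has nothing to do with $A$.

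Finally, since you treat the upper bound as settled, be aware of a weak point in the paper's proof of \cref{upper:bound:proyective}. It only feeds in $\gdfin(W(H))\le\vcd(G)+r$ from \cref{proper:dim:weyl:pure:projec}. The step (ii) it borrows from \cref{upper:bound:gd:fn} needs $\gdfin(W(H))\le\vcd(G)-r$, as in \cref{upper:bound:weil}. That dependency deserves checking too.
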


\begin{proof}
For the lower bound, by \cref{free:abelian:subgroup:sphere} the group $\pb_n(\mathbb{R}P^2)$ contains a free abelian subgroup of rank $\vcd(\pb_n(\mathbb{R}P^2))$, so monotonicity of the geometric dimension gives
\[
  \gd_{\mathcal{F}_r}(\pb_n(\mathbb{R}P^2)) \geq \vcd(\pb_n(\mathbb{R}P^2)) + r.
\]
The upper bound is \cref{upper:bound:proyective}.
\end{proof}

\section{Classifying spaces of infinite-type surface braid groups}\label{virt:abelian:dimension:surface:braid:groups:inf}
In this section we compute the virtually cyclic dimension of infinite-type surface braid groups. For this we need some lemmas.

\begin{lemma}\label{upper:bound:weil:infinite:type}
Let \( \Sigma \) be a connected surface of infinite type. For every infinite virtually abelian subgroup \( L \) of \( \pb_{m}(\Sigma) \), we have 
\[
\gdfin(W_G(L)) \leq \cd(\pb_{m}(\Sigma)) - \rank(L).
\]
\end{lemma}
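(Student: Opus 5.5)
I will mimic the proof of \cref{upper:bound:weil}: induction on the number of strands $m$, driven by the Fadell--Neuwirth sequence of \cref{Fadell:Neuwirth}, which holds for surfaces of infinite type. Two facts replace the finite-type input. First, $\cd(\pb_m(\Sigma))=m$; this follows from \cref{dc:infi:braid}, since $\pb_m(\Sigma)$ has finite index in $B_m(\Sigma)$ and both are torsion-free. Second, for an infinite-type surface $\Sigma$ and any finite set $Q$, the group $\pi_1(\Sigma-Q)$ is a free group of infinite rank. Indeed, every non-compact surface has free fundamental group, and infinite type forces infinite rank.

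\textbf{Base case $m=1$.} Here $\pb_1(\Sigma)=\pi_1(\Sigma)$ is free. An infinite virtually abelian subgroup $L$ of a free group is infinite cyclic, and its normalizer equals its centralizer, which is the maximal cyclic subgroup containing $L$. Hence $W_G(L)$ is finite and
\[
\gdfin(W_G(L))=0=1-1=\cd(\pb_1(\Sigma))-\rank(L).
\]

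\textbf{Inductive step.} Write $G=\pb_{m+1}(\Sigma)$, $F=\pi_1(\Sigma-Q_m)$ and $\varphi\colon G\to\pb_m(\Sigma)$. Restricting to $N_G(L)$ gives the sequence
\[
1\to F\cap N_G(L)\to N_G(L)\to Q\le N_{\pb_m(\Sigma)}(\varphi(L)).
\]
Passing to quotients by $L$ turns this into an extension of $W_G(L)$. I then split into the same three cases as before.

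\begin{enumerate}
\item $\varphi(L)=1$. Then $L\le F$, so $L\cong\Z$ and $W_F(L)$ is finite. By \cref{exten:gd:inequality} (finite kernels pose no problem),
\[
\gdfin(W_G(L))\le \gdfin(Q)\le \gd(\pb_m(\Sigma))=m=\cd(G)-1.
\]
The middle bound uses that $\gd(\pb_m(\Sigma))=m$, which follows by iterating \cref{exten:gd:inequality} along the Fadell--Neuwirth tower of free groups.

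\item $\varphi(L)\neq 1$ and $\rank L=1$. The kernel $F\cap N_G(L)$ is free, so its proper geometric dimension is at most $1$, and every group containing it with finite index is virtually free, with $\gdfin\le 1$. \Cref{exten:gd:inequality} together with the induction hypothesis applied to $\varphi(L)$ then give
\[
\gdfin(W_G(L))\le 1+(m-1)=m=\cd(G)-\rank L.
\]

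\item $\varphi(L)\neq 1$ and $\rank L\ge 2$. Then $L\cap F$ is normalized by $N_G(L)$ and is cyclic, being a free virtually abelian group. The corresponding kernel piece of $W_G(L)$ therefore embeds into a finite group. The induction hypothesis then gives
\[
\gdfin(W_G(L))\le m-\rank\varphi(L).
\]
\end{enumerate}

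\textbf{Main obstacle.} The step I expect to be hardest is the rank bookkeeping in Case 3 for infinite-type surfaces. It requires $\rank\varphi(L)\ge\rank L-1$, i.e.\ $\rank(L\cap F)\le 1$, so that
\[
m-\rank\varphi(L)\le m+1-\rank L.
\]
This holds because abelian subgroups of a free group have rank at most one.

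A related subtlety arises in Case 3 when $L\cap F$ is trivial. Then the kernel $(F\cap N_G(L))/(F\cap L)$ need not be finite: $F\cap N_G(L)$ centralizes a finite-index subgroup of $L$ modulo $F$, and it may be forced to be cyclic or trivial. I would handle this by showing that $F\cap N_G(L)$ commutes with a finite-index subgroup $L_0$ of $L$. Any $f\in F\cap N_G(L)$ acts on $L$ by conjugation with $f\ell f^{-1}\ell^{-1}\in F\cap L$. If $F\cap L=1$, every element of $F\cap N_G(L)$ therefore centralizes $L$. The centralizer in $F$ of a nontrivial element acting on a free group is cyclic, which I would establish via the action on the fiber $\Sigma-Q_m$. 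This bounds the kernel's $\gdfin$ by $1$, and since $\rank\varphi(L)=\rank L$ in this situation, the resulting bound is again $\le m+1-\rank L$.
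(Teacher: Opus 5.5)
Your proposal follows the same route as the paper. The paper's proof runs the induction of \cref{upper:bound:weil} along the Fadell--Neuwirth sequence, with the same three cases and \cref{exten:gd:inequality}. Your separate treatment of Case 3 when $L\cap F=1$ is worthwhile. The finite-type template asserts $N_F(L\cap F)\cong\Z$ and $\rank\varphi(L)=\rank L-1$, which tacitly assumes $L\cap F\cong\Z$. When $L\cap F=1$, one instead has $\rank\varphi(L)=\rank L$, and the kernel is $F\cap N_G(L)$, which need not be finite. So this subcase genuinely needs the argument you sketch.

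Your justification in that subcase contains a false step, however: $C_F(L)$ need not be cyclic. Nontrivial elements of $L$ lie outside $F$ and act on $F$ by automorphisms whose fixed subgroups can be large. Here is a counterexample with $m\ge 2$.
\begin{itemize}
\item Place $p_1,\dots,p_m$ in an embedded annulus $A\subset\Sigma$ whose complementary components are not discs, with $p_{m+1}\notin A$.
\item Let $L$ be the image of the rank-$m$ subgroup of $\pb_m(A)$ from \cref{free:abelian:subgroup:cd}, with the last strand constant.
\item $\varphi|_L$ is injective, by the same subsurface-injectivity used for the lower bound, so $L\cap F=1$.
\item Every loop of $p_{m+1}$ in $\Sigma\setminus A$ has support disjoint from that of $L$, hence commutes with $L$.
\item So $C_F(L)$ contains the fundamental group of the complementary component containing $p_{m+1}$. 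For suitable $A$ this is a non-abelian, indeed infinitely generated, free group.
\end{itemize}

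The fix uses only what you already said in Case 2. $F\cap N_G(L)$ is a subgroup of the free group $F$, hence free, and every group containing it with finite index is virtually free, so has $\gdfin\le 1$. Then \cref{exten:gd:inequality} and the induction hypothesis give $\gdfin(W_G(L))\le 1+(m-\rank L)=\cd(G)-\rank L$, with no cyclicity needed.
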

\begin{proof}
 The proof is by induction on $m$. When \( m = 1 \), we have \( \pb_1(\Sigma) = \pi_1(\Sigma) \), it is well known that \( \pi_1(\Sigma) \) is infinite countable non-finitely generated free group. Then $L\cong \Z$ and $N_{\pb_1(\Sigma)}(L)\cong \Z$, it follows that   \[
\gdfin(W_G(L)) \leq \cd(\pb_{1}(\Sigma)) - \rank(L).
\]
 Assume the lemma holds for \( m \). We now prove it for \( m + 1 \). By \cref{Fadell:Neuwirth} we have the following short exact sequence
\[
1 \to \pi_1(\Sigma - Q_m) \to \pb_{m+1}(\Sigma) \to \pb_m(\Sigma) \to 1,
\]
where \( Q_m \) denotes the set of \( m \) distinct points on \( \Sigma \). Now the proof follows along the same lines as \cref{upper:bound:weil}.
\end{proof}

\begin{definition}\cite[Condition 4.1]{MR2545612}
We say that a group \( G \) satisfies \emph{condition C} if for every \( g,h \in G \) with \( |h| = \infty \) and \( k,l \in \mathbb{Z} \), the equation \( gh^k g^{-1} = h^l \) implies that \( |k| = |l| \).
\end{definition}

\begin{lemma}\cite[Lemma 4.4]{MR2545612}\label{luck:condition:C}
 Let $n$ be an integer. Suppose  that $G$ satisfies condition C. Suppose that $\gdfin(G)\leq n$ and for every infinite cyclic subgroup $H$ of $G$ we have $\gdfin(W_G(H))\leq n$. Then $\gdvc(G)\leq n+1$.  
\end{lemma}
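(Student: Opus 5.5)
We would prove this with the L\"uck--Weiermann pushout of \cref{Luck:weiermann}, applied to the nested families $\calF_0\subseteq\calF_1$ with commensurability as the strong equivalence relation. By \cref{lw:gd:upper:bound} we get
\[
\gdvc(G)\le \max\{\gdfin(G)+1,\ \gd_{\calF_1[C]}(N_G[C])\mid C\in I\}.
\]
The first term is at most $n+1$ by hypothesis, so it suffices to show $\gd_{\calF_1[C]}(N_G[C])\le n+1$ for every infinite virtually cyclic $C$. Since we are working modulo commensurability, we may take the representative $C=\langle h\rangle$ infinite cyclic.

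\textbf{Step 1: the commensurator is a directed union of normalizers.} Let $g\in N_G[C]$. Then $gCg^{-1}\cap C$ has finite index in both groups, so $gh^kg^{-1}=h^l$ for some $k,l\neq0$. Condition C forces $|l|=|k|$, so $g$ normalizes $\langle h^k\rangle$, and hence also normalizes $\langle h^{m}\rangle$ whenever $k\mid m$. It follows that
\[
N_G[C]=\bigcup_{j\ge1}N_G\bigl(\langle h^{j!}\rangle\bigr),
\]
an increasing union of subgroups $N_j:=N_G(C_j)$, where $C_j=\langle h^{j!}\rangle$.

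\textbf{Step 2: models for each $N_j$.} Pulled back along $N_j\to W_G(C_j)$, a model for $E_{\calF_0}W_G(C_j)$ becomes a model for $E_{\calF_1[C]\cap N_j}N_j$. Indeed, its isotropy groups are finite extensions of $C_j$, hence commensurable with $C$. Conversely, every $K\in\calF_1[C]\cap N_j$ has finite image in $W_G(C_j)$: $K\cap C_j$ has finite index in $K$, because both are commensurable with $C$. By hypothesis each of these models can be taken of dimension at most $n$.

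\textbf{Step 3: passage to the union.} We assemble the models from Step 2 into a model for $E_{\calF_1[C]}N_G[C]$. Form the induced spaces $N_G[C]\times_{N_j}E_{\calF_1[C]\cap N_j}N_j$ and the $N_G[C]$-maps between consecutive ones coming from the inclusions $N_j\le N_{j+1}$. The equivariant mapping telescope of this sequence is the candidate model. Its dimension is at most $n+1$, since the telescope adds one dimension.

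We expect this telescope step to be the main obstacle. One has to check the fixed-point sets: for $K\in\calF_1[C]$, $K$ is finitely generated and therefore lies in $N_j$ for all large $j$, so the $K$-fixed set of the telescope is the telescope of contractible spaces and hence contractible. For $K\notin\calF_1[C]$ the fixed set must be empty. This is the standard colimit argument for directed unions, of the kind used by L\"uck, and we would follow it.

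Feeding $\gd_{\calF_1[C]}(N_G[C])\le n+1$ back into the pushout bound gives $\gdvc(G)\le n+1$.
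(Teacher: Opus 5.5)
Your argument is correct and is essentially L\"uck's proof of \cite[Lemma 4.4]{MR2545612}, which the paper only cites: the L\"uck--Weiermann pushout for $\calF_0\subseteq\calF_1$, then condition C giving $N_G[C]=\bigcup_{j} N_G(\langle h^{j!}\rangle)$, then the pulled-back model $E_{\calF_0}W_G(\langle h^{j!}\rangle)$ of dimension at most $n$ for each normalizer, and finally the countable directed-union (telescope) lemma, which costs one extra dimension. One correction to your sketch of Step 3: the $K$-fixed set of $N_G[C]\times_{N_j}E_{\calF_1[C]\cap N_j}N_j$ is not contractible but a disjoint union of contractible pieces indexed by $(N_G[C]/N_j)^K$, so the telescope of fixed sets is homotopy equivalent to the telescope of these discrete sets, and it is contractible because their colimit is a single point (nonempty since $K\le N_j$ for $j$ large, and at most one point since $N_G[C]=\bigcup_j N_j$).
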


\begin{proposition}\label{pure:braid:group:condition:c:inifinite}
Let \( \Sigma \) be a connected surface of infinite type. Then \( \pb_{m}(\Sigma) \) and $B_n(\Sigma)$ satisfies condition C.
\end{proposition}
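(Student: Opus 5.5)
We need to prove that $\pb_m(\Sigma)$ and $B_n(\Sigma)$ satisfy condition C when $\Sigma$ is of infinite type.

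The first step is a reduction to the finite-type case. Suppose $g h^k g^{-1} = h^l$ with $g,h \in B_n(\Sigma)$ and $h$ of infinite order. The braids $g$ and $h$ are represented by finitely many loops in the configuration space. The homotopies realizing the relation are compact, so all of these data are supported in a compact subsurface $N \subset \Sigma$. We may enlarge $N$ to a connected subsurface of finite type that satisfies the hypotheses of \cref{lemma:subsurface-injective}. Concretely, every component of the closure of $\Sigma - N$ should be non-compact, or at least not a disc; this is possible because $\Sigma$ has infinite type, so we can add disc components of the complement to $N$. We can also arrange $\chi(N) < 0$ with $N$ having boundary. Then \cref{lemma:subsurface-injective} gives an injection $B_n(N) \hookrightarrow B_n(\Sigma)$. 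Hence $g$ and $h$ lie in the image of $B_n(N)$, $h$ still has infinite order there, and the relation $gh^kg^{-1}=h^l$ holds in $B_n(N)$. It therefore suffices to prove condition C for $B_n(N)$. The statement for $\pb_m(\Sigma)$ follows as well, since condition C passes to subgroups trivially.

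The second step establishes condition C for $B_n(N)$ with $N$ of finite type, $\chi(N)<0$ and nonempty boundary. By \cite[Section 2.4]{MR3382024}, $B_n(N)$ embeds in $\mcg(N;n)$, the mapping class group of $N$ with $n$ punctures, with boundary fixed. It is known that mapping class groups of finite-type surfaces satisfy condition C. The argument runs as follows. Pass to a finite-index pure subgroup, such as the level-3 congruence subgroup, replacing $h$ by a power $h^p$. This is harmless: $gh^{kp}g^{-1}=h^{lp}$ follows from the original relation, and $|kp|=|lp|$ gives $|k|=|l|$. Now use the Nielsen--Thurston canonical form of $h^p$. Its canonical reduction system $\sigma$ satisfies $\sigma(h^{pk}) = \sigma(h^p)$ for $k \neq 0$, so $g$ permutes $\sigma$.

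Conjugation by $g$ preserves two kinds of invariants. First, it preserves the stretch factors of pseudo-Anosov components: if $h^p$ has a pseudo-Anosov piece with dilatation $\lambda>1$, then $h^{pk}$ and $h^{pl}$ have pieces with dilatations $\lambda^{|k|}$ and $\lambda^{|l|}$, and conjugacy matches these up to permuting components. Since $g$ permutes the finitely many components, some power of $g$ fixes a component, and comparing the largest dilatation gives $\lambda_{\max}^{|k|} = \lambda_{\max}^{|l|}$, hence $|k|=|l|$. Second, it preserves the Dehn twist exponents on reduction curves, including boundary-parallel curves when the boundary is fixed. If every component is periodic, then $h^p$ is a nontrivial multitwist $\prod T_{c}^{e_c}$, and $g$ conjugates $\prod T_c^{ke_c}$ to $\prod T_{g(c)}^{ke_c}$. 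Comparing with $\prod T_c^{le_c}$, and using that twists along disjoint distinct curves are independent, we get $k e_c = l e_{g^{-1}(c)}$ for each curve $c$. Summing absolute values over the finite orbit of curves gives $|k| = |l|$. Alternatively, we could cite a reference for condition C in mapping class groups, such as \cite{MR3767218} or \cite{Rita:Porfirio:Luis}, if one applies.

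The main obstacle is the first step. We must choose $N$ so that the Paris--Rolfsen hypotheses hold and the relation genuinely lifts. Injectivity is exactly what guarantees that $h$ has infinite order in $B_n(N)$ and that the relation holds there; the relation itself is pushed forward, and injectivity is not needed for that direction. The other point needing care is the Dehn twist bookkeeping when $N$ has boundary. To handle it I would pass to the capped or punctured surface via \cref{surfacebraid:puntures:boundaries}, which replaces boundary by punctures, and then work in $\mcg$ of a punctured surface, where the centre-free arguments above apply directly.
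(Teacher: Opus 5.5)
Your proof is correct, but it takes a different route from the paper.

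\textbf{The paper's route.} The paper stays on the infinite-type surface and inducts on the number of strands using the Fadell--Neuwirth sequence $1\to\pi_1(\Sigma-Q_m)\to\pb_{m+1}(\Sigma)\xrightarrow{p}\pb_m(\Sigma)\to 1$. If $h\notin\ker p$, then $p(h)$ has infinite order because $\pb_m(\Sigma)$ is torsion-free, and the projected relation together with the induction hypothesis gives $|k|=|l|$. If $h\in\ker p$, conjugation by $g$ is an automorphism of the free kernel. It therefore carries the unique maximal cyclic subgroup containing $h^l$ to itself and acts on it by $\pm 1$. The passage from $\pb_n(\Sigma)$ to $B_n(\Sigma)$ is then made by a cited lemma reducing to that finite-index subgroup.

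\textbf{Your route.} You observe that condition C is local: two elements, one relation, one compact null-homotopy. This reduces the problem to braid groups of finite-type subsurfaces $N$ with $\chi(N)<0$. You then import condition C from mapping class groups through the Birman embedding $B_n(N)\hookrightarrow\mcg(N;n)$, using canonical reduction systems, dilatations and twist exponents.

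\textbf{What each buys.} The paper's argument is elementary, needing only free groups and the fibration, and it is uniform in orientability. Yours treats $B_n(\Sigma)$ directly, with $\pb_m(\Sigma)$ as a subgroup, so no finite-index lemma is needed. It also shows more generally that condition C for all finite-type surface braid groups implies it for every surface. The price is reliance on Nielsen--Thurston theory, or on citing condition C for mapping class groups.

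\textbf{Points to tidy.}
\begin{itemize}
\item The Paris--Rolfsen injectivity is not needed. You already place the null-homotopy of $gh^kg^{-1}h^{-l}$ inside $N$, so the relation holds in $B_n(N)$. A preimage of an infinite-order element automatically has infinite order, so your sentence attributing this to injectivity has the logic backwards. Dropping injectivity also spares you from checking whether that criterion applies when the ambient surface has infinite type.
\item In the multitwist case the correct relation is $k\,e_{g^{-1}(c)}=l\,e_c$, not $k\,e_c=l\,e_{g^{-1}(c)}$. The orbit-sum argument is unaffected.
\item Dispose of $k=0$ or $l=0$ first (then both vanish), since $\sigma(h^{pk})=\sigma(h^p)$ requires $k\neq 0$.
\item When removing boundary, only puncturing preserves the braid group; capping does not.
\item The paper's surfaces may be non-orientable, so $N$ may be non-orientable. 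Your Nielsen--Thurston and Dehn-twist bookkeeping is written for orientable surfaces, so add a sentence covering this case. For example, pass to the orientation double cover, as the paper does for the Klein bottle and $\mathbb{R}P^2$, and use that condition C passes to subgroups.
\end{itemize}
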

\begin{proof}
By \cite[Lemma 4.14]{MR1760573} it is enough to prove the claim for \( \pb_{m}(\Sigma) \).
The proof proceeds by induction on \( m \). For \( m = 1 \), we have \( \pb_{1}(\Sigma) = \pi_1(\Sigma) \), which is a countable free group and hence satisfies condition~\( C \) see for example \cite[Lemma 2.3]{MR4817678}.

Assume that the claim holds for all \( k \leq m \), and we prove it for \( m+1 \). Consider the short exact sequence:
\[
1 \longrightarrow \pi_1(\Sigma - Q_m) \longrightarrow \pb_{m+1}(\Sigma) \xrightarrow{p} \pb_m(\Sigma) \longrightarrow 1.
\]
Let \( g, h \in \pb_{m+1}(\Sigma) \) with \( |h| = \infty \), and suppose there exist \( k, l \in \mathbb{Z} \) such that
\[
g h^k g^{-1} = h^l.
\]

We distinguish two cases:

\medskip
\noindent
\textbf{Case 1:} \( h \notin \ker(p) \). Then
\[
p(g) \, p(h)^k \, p(g)^{-1} = p(h)^l.
\]
Since \cref{dc:infi:braid} ensures that \( \pb_m(\Sigma) \) is torsion-free, the element \( p(h) \) has infinite order. Moreover, by the inductive hypothesis, \( \pb_m(\Sigma) \) satisfies Condition~\( C \), and hence \( k = \pm l \).

\medskip
\noindent
\textbf{Case 2:} \( h \in \ker(p) = \pi_1(\Sigma - Q_m) \). Conjugation by \( g \) defines a homomorphism
\[
\varphi \colon \pi_1(\Sigma - Q_m) \longrightarrow \pi_1(\Sigma - Q_m),
\]
and \( \varphi(h^k) = h^l \). Note that the cyclic subgroup \( \langle h \rangle \) is contained in a unique maximal infinite cyclic subgroup \( H \leq \pi_1(\Sigma - Q_m) \). Since conjugation preserves maximal cyclic subgroups, we must have \( \varphi(H) = H \). As \( H \cong \mathbb{Z} \), it follows that \( \varphi|_H = \pm \mathrm{id} \), and therefore \( k = \pm l \).

\medskip
\noindent
This concludes the induction step, and hence the proof.
\end{proof}

\begin{theorem}
Let \( \Sigma \) be a connected surface of infinite type. Then, 
\[
\gd_{\calF_1}(\pb_n(\Sigma)) = n+ 1.
\]
\end{theorem}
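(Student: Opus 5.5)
We split the argument into an upper bound $\gd_{\calF_1}(\pb_n(\Sigma))\le n+1$ and a matching lower bound. For the upper bound we use \cref{luck:condition:C} with $G=\pb_n(\Sigma)$ and the integer $n$. Its three hypotheses are checked as follows.

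First, condition~C holds by \cref{pure:braid:group:condition:c:inifinite}.

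Second, we need $\gdfin(G)\le n$. By \cref{dc:infi:braid} and the inclusion $\pb_n(\Sigma)\le B_n(\Sigma)$, we have $\cd(G)\le n$, and $G$ is torsion-free. To pass from $\cd$ to $\gd$ we argue by induction on $n$ with the Fadell--Neuwirth sequence (\cref{Fadell:Neuwirth}):
\[
1\to\pi_1(\Sigma-Q_{n-1})\to\pb_n(\Sigma)\to\pb_{n-1}(\Sigma)\to1 .
\]
The kernel is a countable free group, so $\gd=1$. Any group containing it with finite index is virtually free and countable, so its $\gdfin$ is also $1$; this gives the hypothesis of \cref{exten:gd:inequality}. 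Hence $\gd(\pb_n(\Sigma))\le 1+\gd(\pb_{n-1}(\Sigma))\le n$. The base case is $\gd(\pi_1(\Sigma))=1$, since $\pi_1(\Sigma)$ is free.

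Third, for every infinite cyclic $H\le G$ we need $\gdfin(W_G(H))\le n$. This follows from \cref{upper:bound:weil:infinite:type}, which gives $\gdfin(W_G(H))\le \cd(G)-1\le n-1$. The lemma \cref{luck:condition:C} then yields $\gd_{\calF_1}(G)\le n+1$.

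For the lower bound we exhibit a copy of $\Z^n$ inside $G$. Since $\Sigma$ has infinite type, it contains a compact subsurface $N\cong\Sigma_0^2$ (an annulus) such that no component of the closure of $\Sigma- N$ is a disc. For instance, take a regular neighbourhood of an essential separating or nonseparating curve, so that both sides carry infinite topology or are non-discs. By \cref{lemma:subsurface-injective}, $\pb_n(\Sigma_0^2)$ injects into $\pb_n(\Sigma)$. The hypothesis of that result excludes only the sphere and the projective plane, so it applies to our $M=\Sigma$; if needed, one first passes to a large finite-type subsurface of $\Sigma$ containing $N$, and uses that $B_n$ of a subsurface injects into $B_n(\Sigma)$ by the same result. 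By \cref{free:abelian:subgroup:cd}, $\pb_n(\Sigma)$ then contains a subgroup $A\cong\Z^n$.

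Monotonicity of $\gd_{\calF_1}$ under passing to subgroups gives $\gd_{\calF_1}(G)\ge\gd_{\calF_1}(A)$. For $n\ge2$, \cref{Virtually:dimension:Z} with $k=1<n$ gives $\gd_{\calF_1}(A)=n+1$. For $n=1$ the group $G=\pi_1(\Sigma)$ is a non-finitely generated free group, so it is not virtually cyclic and hence $\gd_{\calF_1}(G)\ge1$. It also contains $\Z^{*2}$, and the free group $F_2$ has $\gd_{\calF_1}=2$ (the standard fact that a non-elementary hyperbolic group has $\gd_{\calF_1}\ge2$). Therefore $\gd_{\calF_1}(G)\ge 2=n+1$.

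The main obstacle is carrying \cref{exten:gd:inequality} and \cref{lemma:subsurface-injective} over to infinite-type surfaces, where groups are not finitely generated. We would address this by working with countable groups throughout. For the embedding, we would reduce to a finite-type subsurface $N'$ with $N\subset N'\subset\Sigma$ satisfying the complement condition, and use the paper's \cref{lemma:subsurface-injective} applied to $N\subset N'$ and to $N'\subset\Sigma$.
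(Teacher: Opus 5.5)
Your proposal is correct and is essentially the paper's proof. The upper bound comes from \cref{luck:condition:C}, fed with condition~C, with $\gd(\pb_n(\Sigma))\le n$ (which you reprove by the Fadell--Neuwirth induction, where the paper cites \cite[Theorem~4.3]{2025arXiv250610706L}) and with \cref{upper:bound:weil:infinite:type}; the lower bound comes from an embedded annulus together with \cref{free:abelian:subgroup:cd} and \cref{Virtually:dimension:Z}, exactly as in the paper. Your separate treatment of $n=1$ via $F_2\le\pi_1(\Sigma)$ is genuinely needed, since \cref{Virtually:dimension:Z} requires $k<n$ and $\gd_{\calF_1}(\Z)=0$, so the paper's one-line lower bound misses that case. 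For the embedding, applying \cref{lemma:subsurface-injective} to $N'\subset\Sigma$ is again the infinite-type situation you wanted to avoid (the paper instead cites \cite[Proposition~4.2]{2025arXiv250610706L}); a cleaner fix is that a null-homotopy in $F_n(\Sigma)$ of a loop in $F_n(N)$ has compact image, so it lies in $F_n(N'')$ for some compact connected subsurface $N''\supseteq N$, no component of $\overline{N''- N}$ can be a disc (it would be a disc component of $\overline{\Sigma- N}$), and the finite-type result applied to $N\subset N''$ suffices.
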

\begin{proof}
 We first prove the upper bound. By \cref{luck:condition:C} and \cref{pure:braid:group:condition:c:inifinite}, it suffices to verify the following conditions:
\begin{enumerate}
    \item[(i)] $\gd(\pb_n(\Sigma))\leq n$;
    \item[(ii)] for every infinite cyclic subgroup \( H \leq \pb_n(\Sigma) \), we have \( \underline{\gd}(W(H)) \leq n \).
\end{enumerate}

 Item~(i) was proved in \cite[Theorem 4.3]{2025arXiv250610706L}, and item~(ii) follows from \cref{upper:bound:weil:infinite:type}.

\medskip

We now prove the lower bound. Observe that \( \Sigma \) contains a subsurface homeomorphic to \( \Sigma_0^2 \) such that each component of \( \Sigma - \Sigma_0^2 \) is not homeomorphic to a disk. By \cite[Proposition 4.2]{2025arXiv250610706L}, this implies that \( \pb_n(\Sigma_0^2) \) embeds as a subgroup of \( \pb_n(\Sigma) \). The desired lower bound then follows from \cref{free:abelian:subgroup:cd} and \cref{Virtually:dimension:Z}.
   
\end{proof}


\bibliographystyle{alpha} 
\bibliography{mybib}
\end{document}